\newcounter{rowcount}
\newsavebox\ltmcbox
\let\epsilon = \varepsilon
\title{Induced Subgraphs in Strongly Regular Graphs}
\keywords{Strongly regular graphs, Induced subgraphs, Automorphism group}
\theoremstyle{plain}
\newtheorem{prop}{Proposition}[section]
\newtheorem{veta}{Theorem}[section]
\newtheorem{cor}{Corollary}[section]
\newtheorem{lema}{Lemma}[section]
\newtheorem{definicia}{Definition}[section]
\newtheorem*{rem}{Remark}
\begin{document}

\titulnestrany
\newpage

\podakovanie I want to thank my supervisor Doc. RNDr. Martin Ma\v caj, PhD.,
for his help, the materials he gave me and for all the
insightful answers to my questions.
\kpodakovania

\newpage


\anotaciaeng  
\par This thesis focuses on theoretical and algorithmic tools for determining the numbers of induced subgraphs in strongly regular graphs, SRGs, and on further applications of such numbers. We consider in more detail a restricted class of these graphs, specifically those with no triangles. In this special case, there are infinitely many feasible sets of parameters for SRGs. Despite this fact there are only seven known examples of such graphs.\par
we develop an algorithm which produces linear equations describing various relations between numbers of induced subgraphs of orders $o$ and $o-1$ in a SRG. We apply our results also on $srg(3250,57,0,1)$ (existence of which is a famous open problem). In this case, the number of induced subgraphs isomorphic to a given graph on $10$ vertices depends only on the number of induced Petersen graphs. 
Furthermore, we provide new insights about automorphisms of $srg(3250,57,0,1)$ as well as bounds for the numbers of induced $K_{3,3}$ in general triangle-free SRGs. At the end of the thesis we discuss possible extension of our approach for the study of so called $t$-vertex condition.

\kanotaciaeng

\thispagestyle{empty}

\tableofcontents

\newpage

\setcounter{page}{1}



\addcontentsline{toc}{section}{\numberline{}Introduction}
\section*{Introduction}
Strongly regular graphs (also referred to as SRGs) are objects standing somewhere between highly symmetric and random graphs. They form an important class of graphs with applications in various fields such as coding theory, theoretical chemistry, or group theory.
\par SRGs appeared for the first time in the work of R. C. Bose \cite{Bos}, who in cooperation with D. M. Mesner explained their connection to linear algebra \cite{Bos_Mesner}. 
An SRG is a regular graph which can be described by four parameters: its number of vertices ($n$), its degree ($k$), the number of common neighbors for any pair of adjacent vertices ($\lambda$) and the number of common neighbors for any pair of non-adjacent vertices ($\mu$).
\par The strongest techniques in this area uses the theory of graph spectra. 
This fact is illustrated by the influential paper of J. Hoffman and R. Singleton \cite{HS} where authors show that there are only four feasible values for the degree of Moore graphs with diameter two (that is SRGs with $\lambda=0$ and $\mu=1$), namely, $2$, $3$, $7$ and $57$.
Hoffman an Singleton have successfully constructed three of these, but the existence of the Moore graph of valency $57$ remains unknown. Actually, this question became a famous open problem in graph theory. For illustration we refer to the monograph of A. E. Brouwer and W. H. Haemers \cite{BrHe}.
\par 
Spectral methods provide extremely strong necessary conditions for the parameters of an $SRG$. As a consequence, researchers usually apply combinatorial techniques on very limited systems of parameters sets. A notable exception is an important connection between SRGs and design theory \cite{Bos_Mesner}, \cite{Bos}. 
\par A useful combinatorial technique is the study of interplay of some small configurations in a given object
and deriving of the number of their occurrences can help to find new conditions for the existence of the object. This method is used in \cite{4conf} where $4$-cycle systems are explored.
\par We focus on theoretical and algorithmic tools for determining the numbers of induced subgraphs in SRGs from their parameters and on further applications of this method. Although the information about small induced subgraphs was efficiently used in the study of graphs with concrete parameter sets, for example in \cite{BrHe} and \cite{klin}, we are not aware of general results in this direction. Similar methods were used in the study of graphs satisfying the so called $t$-vertex condition.
\par For a given order $o$, we develop an algorithm that produces relations between numbers of induced subgraphs of orders $o$ and $o-1$ for an arbitrary SRG. Moreover, the algorithm can be easily modified to consider only triangle-free SRGs or Moore graphs. The relationships are given by a system of linear equations.
\par Our main result is an extensive analysis of solutions of these systems. For instance, it turns out that in a putative Moore graph $\Gamma$ of valency $57$, the number of induced subgraphs isomorphic to a given graph on $10$ vertices depends only on the number of induced subgraphs isomorphic to the Petersen graph in $\Gamma$. Among the applications of our methods, there are new results about automorphisms of order $7$ in a Moore graph of valency $57$ and new bounds on the numbers of induced $K_{3,3}$ in triangle-free SRGs with small number of vertices.
\par The thesis is organized as follows. The summary of basic notions and results about SRGs and triangle-free SRGs is discussed in Section 1 and Section 2. Section 3 contains detailed description of our algorithm together with output analysis. The last three sections are devoted to possible applications of our results. In Section 4 we combine ou results about triangle-free SRGs with the method recently developed by Bondarenko, Prymak and Radchenko \cite{klin} to derive new bounds on the number of $K_{3,3}$ in triangle-free SRGs. In Section 5 we improve upon results on automorphism group of Moore graph of valency $57$ from \cite{MacSir}. Finally, in section 6 we discuss extensions of our approach to the study of the $t$-vertex condition.

\newpage
\section{Strongly regular graphs}
In this chapter we introduce a formal definition of SRGs as well as some other terms which we will use later in the thesis. We also present some important results from this part of graph theory. 
\subsection{Basic definitions and known results}
Among the possible ways to define an SRG, we propose the following one:
\begin{definicia}
\label{def_SRG}
A \emph{strongly regular graph} with parameters $n, k, \lambda$ and $\mu$ is a k-regular graph on $n$ vertices with following properties:
\begin{enumerate}
\item Any two adjacent vertices have exactly $\lambda$ common neighbours.
\item Any two non-adjacent vertices have exactly $\mu$ common neighbours.
\end{enumerate}
We use the notation $srg(n,k,\lambda, \mu)$ for such a graph.
\end{definicia}

In fact, it is not necessary to use all four parameters. The following proposition shows a relations between them.

\begin{prop}\label{prop-par}
Let us consider $\Gamma=srg(n,k,\mu,\lambda)$. Then the following identity holds:
$$(k-\lambda-1)k=(n-k-1)\mu$$
\end{prop}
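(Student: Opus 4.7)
The plan is to prove the identity by a standard double counting argument. Fix an arbitrary vertex $v$ of $\Gamma$ and count in two ways the number of edges between the set $N(v)$ of neighbours of $v$ and the set $M(v)$ of vertices other than $v$ that are not adjacent to $v$. By the regularity of $\Gamma$ we have $|N(v)| = k$, and therefore $|M(v)| = n - k - 1$.

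First I would count from the side of $N(v)$. Take any $u \in N(v)$. Since $u$ has degree $k$, and among its neighbours we find $v$ itself together with exactly $\lambda$ common neighbours of $u$ and $v$ (which lie in $N(v)$), the number of neighbours of $u$ lying in $M(v)$ is $k - 1 - \lambda$. Summing over all $u \in N(v)$ gives $k(k - \lambda - 1)$ edges between $N(v)$ and $M(v)$. Then I would count from the side of $M(v)$: for any $w \in M(v)$, the vertex $w$ is non-adjacent to $v$, so $w$ and $v$ share exactly $\mu$ common neighbours, all of which lie in $N(v)$. Hence $w$ contributes exactly $\mu$ edges to the count, and summing over all $w \in M(v)$ gives $(n - k - 1)\mu$.

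Equating the two expressions yields the claimed identity $(k-\lambda-1)k = (n-k-1)\mu$. There is no real obstacle here; the only subtle point worth stating explicitly is the partition of neighbours of a vertex $u \in N(v)$ into $\{v\}$, the $\lambda$ common neighbours with $v$ in $N(v)$, and the rest lying in $M(v)$, which relies on the defining property of an SRG applied to the adjacent pair $u, v$.
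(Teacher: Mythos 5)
Your proof is correct and is essentially the same double counting argument as the paper's: both fix a vertex $v$, count the edges between its neighbour set and its non-neighbour set once via the $k-\lambda-1$ neighbours each $u\in N(v)$ has outside, and once via the $\mu$ common neighbours each non-neighbour shares with $v$. No issues.
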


\begin{proof}
Let $v$ be some vertex in $\Gamma$. We denote the set of all its neighbours by $N$ and the set of remaining vertices by $R$. It is easy to see that the cardinalities of $N$ and $R$ are $k$ and $n-k-1$, respectively. Since $\Gamma$ is an $SRG$, every edge lies in $\lambda$ triangles. Therefore each vertex of $N$ has $\lambda$ neighbours in $N$ and $k-\lambda-1$ neighbours in $R$. On the other hand, each vertex in $R$ has $\mu$ common neighbours with $v$. Hence, we obtain the required equation.
\end{proof}

Proposition \ref{prop-par} says that the four parameters describing the structure of a given $SRG$ are not independent. It is sufficient to know values of parameters $k$, $\lambda$ and $\mu$. This triple already determines the value of the remaining parameter $n$.\par
The following proposition contains an obvious but important information. If we have some $SRG$, it immediately allows us to obtain another $SRG$.
\newpage
\begin{prop}
Let $\Gamma$ be an $srg(n,k,\lambda, \mu)$. The complement of $\Gamma$ is also strongly regular, with parameters:
\begin{enumerate}[ ]
	\item $\overline{n}=n$
	\item $\overline{k}=n-k-1$
	\item $\overline{\lambda }=n+\mu-2(k-1)$
	\item $\overline{\mu }=n+\lambda-2k$
	\end{enumerate}
\end{prop}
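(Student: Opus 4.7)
The plan is a straightforward counting argument performed case by case for the four parameters. The vertex counts $\overline{n}=n$ and $\overline{k}=n-k-1$ are immediate from the definition of the complement: the vertex set is unchanged, and each vertex has $k$ neighbours in $\Gamma$ together with $n-1-k$ non-neighbours, which become its $\overline{\Gamma}$-neighbours.

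For the remaining two parameters I would fix a pair $u,v$ of distinct vertices and compute
\[
|N_{\overline{\Gamma}}(u)\cap N_{\overline{\Gamma}}(v)|
\;=\; n \;-\; \bigl|\{u,v\}\cup N_\Gamma(u)\cup N_\Gamma(v)\bigr|,
\]
using that a common $\overline{\Gamma}$-neighbour is precisely a vertex distinct from $u,v$ that is $\Gamma$-nonadjacent to both. By inclusion--exclusion, $|N_\Gamma(u)\cup N_\Gamma(v)|=2k-|N_\Gamma(u)\cap N_\Gamma(v)|$, and Definition \ref{def_SRG} identifies the latter intersection size with $\lambda$ or $\mu$ depending on whether $u,v$ are $\Gamma$-adjacent.

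The main point to get right --- and essentially the only place the argument could slip --- is whether $\{u,v\}$ is already absorbed into $N_\Gamma(u)\cup N_\Gamma(v)$. When $u,v$ are adjacent in $\overline{\Gamma}$ (so nonadjacent in $\Gamma$), neither of them lies in the other's $\Gamma$-neighbourhood, so $\{u,v\}$ contributes two extra vertices to the union and the relevant intersection size is $\mu$; this yields the value of $\overline{\lambda}$. When $u,v$ are nonadjacent in $\overline{\Gamma}$ (so adjacent in $\Gamma$), each of them lies in the other's $\Gamma$-neighbourhood, $\{u,v\}$ is already contained in the union, and the intersection size is $\lambda$; this yields the value of $\overline{\mu}$. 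Rearranging the two resulting expressions gives the formulas in the statement, and I expect no real obstacle beyond keeping track of this $\pm 2$ correction in the two cases.
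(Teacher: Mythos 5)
Your counting argument is sound, and since the paper states this proposition without proof there is nothing to compare it against methodologically; the inclusion--exclusion computation you describe is the standard one. The two easy parameters are fine, and your handling of the $\pm 2$ correction is exactly right: when $u,v$ are nonadjacent in $\Gamma$ the pair $\{u,v\}$ is disjoint from $N_\Gamma(u)\cup N_\Gamma(v)$, and when they are adjacent it is contained in that union.

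However, your closing claim that ``rearranging the two resulting expressions gives the formulas in the statement'' is false for $\overline{\lambda}$, and you should have caught this. Your own computation gives
$\overline{\lambda} = n - 2 - (2k-\mu) = n+\mu-2(k+1)$,
whereas the statement asserts $\overline{\lambda}=n+\mu-2(k-1)$; these differ by $4$. The discrepancy is a misprint in the statement, not an error in your argument: for the Petersen graph $srg(10,3,0,1)$, whose complement is the triangular graph $srg(10,6,3,4)$, your formula gives $\overline{\lambda}=10+1-8=3$ (correct), while the printed formula gives $10+1-4=7$ (impossible, since $\overline{\lambda}\le \overline{k}=6$). The formula for $\overline{\mu}=n+\lambda-2k$ checks out ($10+0-6=4$). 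So the proof is correct, but a correct proof of this statement must end by observing that the stated value of $\overline{\lambda}$ should read $n+\mu-2(k+1)$; asserting agreement with the printed formula without verifying it is the one genuine lapse here.
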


Note that any disconnected SRG has to be a disjoint union of complete graphs of the same order and it is considered to be of little interest. Since the complementary graph of any SRG is also strongly regular we consider only SRGs for which both their complements and themselves are connected. SRGs of this kind are called primitive. If some SRG is not primitive we call it imprimitive. Note that SRGs with disconnected complements are complete multi-partite graphs.

\begin{figure}[h!]
\begin{center}
\includegraphics[width=0.5\textwidth]{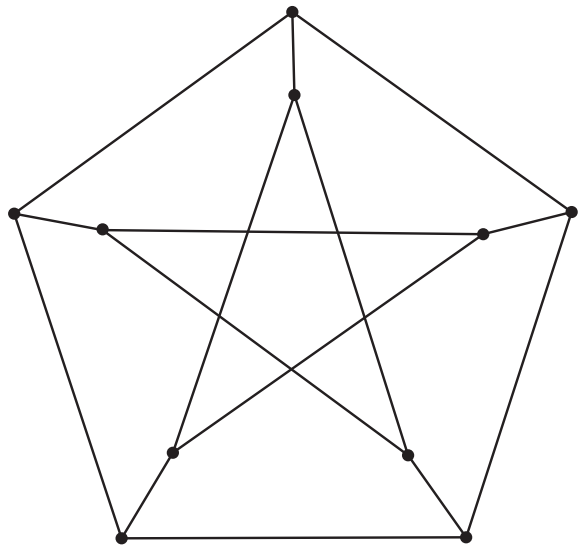}
\caption{The $srg(10,3,0,1)$ (Petersen graph)}
\label{petersen}
\end{center}
\end{figure}

\begin{figure}[h!]
\begin{center}
\includegraphics[width=0.85\textwidth]{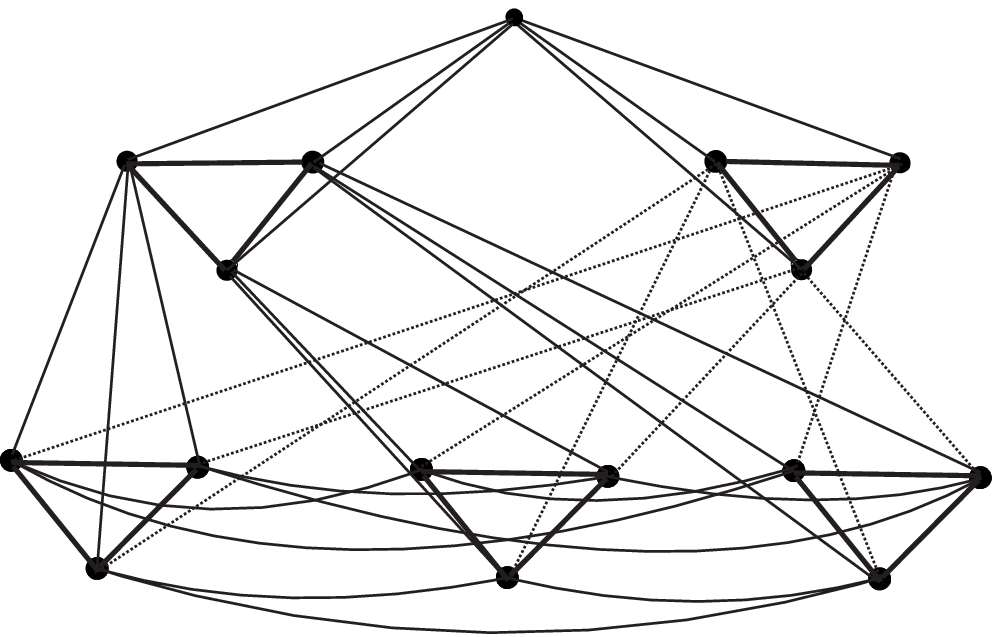}
\caption{A $srg(16,6,2,2)$ ($L_2(4)$ graph)}
\label{16_trojuh}
\end{center}
\end{figure}

A famous example of a primitive SRG is the Petersen graph (see figure \ref{petersen}) with parameters $(10,3,0,1)$. In fact, Petersen graph is the only $SRG$ with this parameter set. However, there are feasible parameter sets $n$, $k$, $\lambda$, $\mu$ with multiple SRGs. To illustrate this fact we choose two non-isomorphic SRGs with parameters $(16,6,2,2)$. Let us pick a vertex $v$ of this graph and denote the set of its neighbours by $N$. Since $\mu=2$, any edge lies in two different triangles. This condition has to be satisfied also by edges, which contain the vertex $v$. It follows that $N$ has to induce a union of cycles. Figure \ref{16_trojuh} presents the case when $N$ consists of two triangles.
The second possibility to arrange vertices of $N$ is to let them form a 6-cycle as shown in figure \ref{16_sestuh}. Note that if the set of neighbours for some vertex in a $srg(16,6,2,2)$ consists of two triangles then it has to be so for any vertex of this graph. Therefore there are no more SRGs with this set of parameters.

\begin{figure}[h!]
\begin{center}
\includegraphics[width=0.85\textwidth]{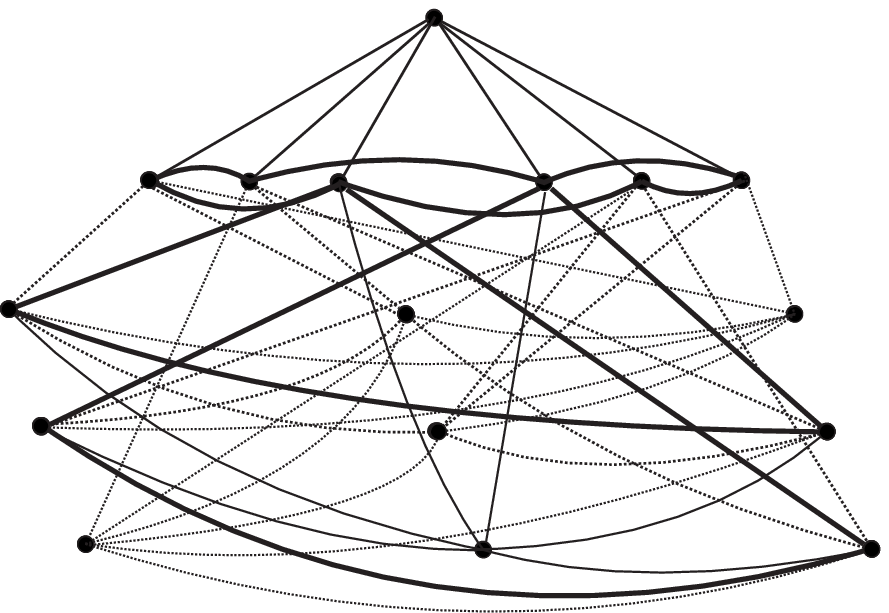}
\caption{$srg(16,6,2,2)$ (Shrikhande graph)}
\label{16_sestuh}
\end{center}
\end{figure}

There are even more extreme cases than $srg(16,6,2,2)$. For example, there exists a unique $srg(36,10,4,2)$. On the other hand, the computation of McKay and Spence \cite{mcs} shows that the number of graphs with parameters $(36,15,6,6)$ is 32548. This pattern continues as an infinite class of SRGs. There is a unique $srg(m^2,2(m-1),m-2,2)$ but more than exponentially many $srg(m^2,3(m-1),m,6)s$. This suggests that depending on the choice of parameters, SRGs can behave in either a highly structured or an apparently random manner.

\begin{figure}[h!]
\begin{center}
\includegraphics[width=1.0\textwidth]{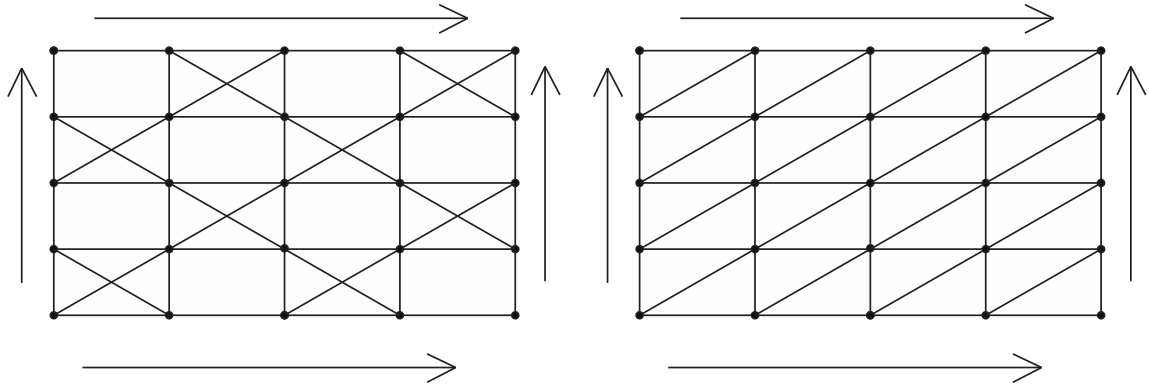}
\caption{A $srg(16,6,2,2)$ drown on the torus}
\label{petersen}
\end{center}
\end{figure}

\begin{definicia}
\emph{The adjacency matrix} for a graph with $n$ vertices is an $n \times n$ matrix whose $(i, j)^{th}$ entry is $1$ if the
$i^{th}$ vertex and $j^{th}$ vertex are adjacent, and 0 if they are not.\\
When speaking about graph spectra we will refer to the spectrum of its adjacency matrix.
\end{definicia}

Let $A$ be an adjacency matrix of some graph $\Gamma$. Then $\Gamma$ is an SRG with parameters $n$, $k$, $\lambda$ and $\mu$ if and only if $A$ satisfies
\begin{enumerate}[i)]
\item $A$ has row sum $k$
\item If $J$ denotes the all-one matrix and $I$ the identity matrix, then 
\begin{equation} \label{eq:vlc}
A^2 - (\lambda-\mu)A - (k-\mu)I=\mu J
\end{equation}
\end{enumerate}

Using the equation \ref{eq:vlc} together with basic knowledge of matrix theory we can obtain the following proposition.
 
\begin{veta}\label{prop:param}
The spectrum of any $SRG(n,k,\lambda,\mu)$ consists of eigenvalues $k$, $r$ and $s$, where

\begin{equation*}
r,s=\frac{\lambda-\mu \pm\sqrt{(\lambda-\mu)^2+4(k-\mu)}}{2}
\end{equation*}
Their respective multiplicities are $1$, $f$ and $g$ with $f$ and $g$ satisfying

\begin{equation*}
f,g=\frac{1}{2}\left( n-1\mp\frac{2k+(n-1)(\lambda-\mu)}{\sqrt{(\mu-\lambda)^2+4(k-\mu)}}\right)
\end{equation*} 
\end{veta}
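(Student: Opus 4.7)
The plan is to exploit the quadratic matrix identity (\ref{eq:vlc}), namely $A^2 - (\lambda-\mu)A - (k-\mu)I = \mu J$, together with elementary linear algebra. Since $\Gamma$ is $k$-regular, the all-ones vector $\mathbf{1}$ is an eigenvector of $A$ with eigenvalue $k$; as $A$ is real symmetric, every other eigenvector may be chosen orthogonal to $\mathbf{1}$ and therefore lies in the kernel of $J$. Applying (\ref{eq:vlc}) to such an eigenvector $v$ with $Av = \theta v$ yields $\theta^2 - (\lambda-\mu)\theta - (k-\mu) = 0$, so the remaining eigenvalues are exactly the roots of this quadratic, which gives the stated formulas for $r$ and $s$ via the quadratic formula.

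To find the multiplicities $f$ and $g$, I set up two linear equations. The first is the dimension count $1 + f + g = n$. The second comes from the trace: since $\Gamma$ has no loops, $\operatorname{tr}(A) = 0$, and splitting this trace according to the three eigenspaces gives $k + fr + gs = 0$. This is a $2\times 2$ linear system in $f$ and $g$. Using the Vieta relations $r+s = \lambda-\mu$ and $r-s = \sqrt{(\lambda-\mu)^2+4(k-\mu)}$, I can solve for $f$ and $g$ in closed form. Specifically, subtracting $s$ times the first equation from the second eliminates $g$ and yields $f(r-s) = -(k + s(n-1))$; substituting $s = \tfrac{1}{2}((\lambda-\mu) - \sqrt{(\lambda-\mu)^2 + 4(k-\mu)})$ and simplifying produces exactly the formula in the statement, with the $\mp$ sign matching the $\pm$ sign convention used for $r,s$.

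The only real obstacle is arithmetic bookkeeping in the last step: one must carefully collect the $2k$ term from $-k$, the $(n-1)(\lambda-\mu)$ term that appears when $s$ is substituted, and the square-root term, and verify that the signs line up so that the multiplicity of $r$ (the larger root) is given by the $-$ sign inside the outer parenthesis and vice versa. No deeper machinery is needed; the entire content of the theorem is a direct consequence of the matrix identity (\ref{eq:vlc}), the $k$-regularity of $\Gamma$, and the vanishing of $\operatorname{tr}(A)$.
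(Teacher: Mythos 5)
Your proposal is correct and follows essentially the same route as the paper: both derive the quadratic $x^2-(\lambda-\mu)x-(k-\mu)=0$ by applying the identity $A^2-(\lambda-\mu)A-(k-\mu)I=\mu J$ to an eigenvector orthogonal to the all-ones vector, and both recover $f$ and $g$ from $1+f+g=n$ together with $\operatorname{tr}(A)=0$. Your write-up merely carries out the $2\times2$ elimination explicitly where the paper leaves it as a remark, and your sign bookkeeping checks out.
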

\begin{proof}\cite{PJC}
Let $A$ be the adjacency matrix of $\Gamma=SRG(n,k,\lambda,\mu)$. We already know that $A$ satisfies the following

$$A^2 - (\mu-\lambda)A - (k-\mu)I=\mu J$$

Since $A$ has row sum $k$, the vector of ones $j$ is an eigenvector of $A$ with eigenvalue $k$. Let $v$ be some other eigenvector of $A$ with corresponding eigenvalue $x$.\\
The eigenvector $v$ has to be orthogonal to $j$. It follows that
\begin{align*}
A^2v - (\mu-\lambda)Av - (k-\mu)Iv&=\mu Jv\\
x^2 - (\mu-\lambda)x - (k-\mu)x&=0
\end{align*}
From this we can obtain the eigenvalues $r$, $s$ and (using the fact that trace of $A$ equals zero) also their respective multiplicities $f$, $g$. 
\end{proof}

Let $n$, $k$, $\lambda$ and $\mu$ be non-negative integers. Then a $srg(n,k,\lambda, \mu)$ exists only if the expression

$$\frac{(n-1)(\mu-\lambda)-2k}{\sqrt{(\mu-\lambda)^2+4(k-\mu)}}$$

is an integer with the same parity as $n-1$. The parameter sets that can be obtained from this criterion are called feasible.\\
On the basis of this result we can classify SRGs into two classes.

\begin{enumerate}[I.]
\item Conference graphs where $(n-1)(\mu-\lambda)-2k=0$. This implies that $\lambda=\mu-1$, $k=2\mu$, and $n=4\mu+1$. 
This class consists of SRGs with the same parameters as their complements. It is known that they exist only if $n$ is a sum of two squares.
\item Graphs for which $(\mu-\lambda)^2+4(k-\mu)=d^2$, where $d\in\mathbb{N}$ and divides\\
$(n-1)(\mu-\lambda)-2k$ with quotient congruent to $n-1\mod{2}$.
\end{enumerate}

A table of feasible values of parameters of SRGs on up to $1300$ vertices can be found on the home page of A. E. Brouwer \cite{table2}.

\subsection{Other feasibility criteria}
Except for the integral criteria there are few other restrictions on parameters of SRGs. In this section we describe two of these which can also be determined from properties of graph spectra, namely, the Krein condition and the absolute bound  \cite{PJC}. Some authors consider these two also as criteria of feasibility for SRGs.

\subsubsection{The Krein condition}

One of the most important conditions satisfied by parameters of an SRG is the Krein condition. It was first proved by Scott \cite{Scott}, using a result of Krein \cite{krc} from harmonic analysis.

\begin{prop}[The Krein condition]\label{Krein} \cite{Scott}
Let $\Gamma$ be an arbitrary $srg(n,k,\lambda,\mu)$ with eigenvalues $k$, $r$ and $s$. Then the following holds:

\begin{align*}
(r+1)(k+r+2rs)&\leq (k+r)(s+1)^2\\
(s+1)(k+s+2rs)&\leq (k+s)(r+1)^2
\end{align*}
\end{prop}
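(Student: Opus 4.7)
The plan is to derive both inequalities from a single principle: positive semi-definiteness of the entry-wise (Schur) product of two positive semi-definite matrices. Let $A$ be the adjacency matrix of $\Gamma$. By Theorem \ref{prop:param}, $A$ has the spectral resolution $A = kE_0 + rE_1 + sE_2$, where $E_0 = \frac{1}{n}J$ and $E_1$, $E_2$ are the orthogonal projections onto the eigenspaces for $r$ and $s$. In particular, each $E_i$ is symmetric and positive semi-definite, and $E_0+E_1+E_2=I$.

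The first step is to observe that the linear span of $I$, $A$ and $J-I-A$ (the Bose--Mesner algebra of $\Gamma$) coincides with the span of $E_0$, $E_1$, $E_2$ and is closed under the Schur product $\circ$, because the three 0/1 matrices $I$, $A$, $J-I-A$ are Schur-idempotent and pairwise Schur-orthogonal. Hence for each $i,j \in \{0,1,2\}$ there are uniquely determined scalars $q^{k}_{ij}$ (the Krein parameters) such that
\[
E_i \circ E_j \;=\; \frac{1}{n}\sum_{k=0}^{2} q^{k}_{ij}\, E_k.
\]

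The second step invokes the Schur product theorem: $E_i \circ E_j$ is positive semi-definite whenever both factors are. Since $E_0, E_1, E_2$ are pairwise orthogonal projections, evaluating $E_i\circ E_j$ on a unit vector chosen from the range of $E_k$ shows that $q^{k}_{ij}/n \ge 0$. The two non-trivial constraints of this form (the remaining $q^{k}_{ij}$ being manifestly non-negative from the parameter formulas) are exactly the two inequalities in the statement.

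The computational heart of the argument is therefore to expand the Schur squares $E_1 \circ E_1$ and $E_2 \circ E_2$ in the basis $\{I,A,J-I-A\}$ and to re-expand the result in the idempotent basis $\{E_0,E_1,E_2\}$. Using the explicit expressions for $E_1$ and $E_2$ as linear combinations of $I$, $A$, $J$ obtained by inverting the relation $A = kE_0+rE_1+sE_2$ (together with $E_0+E_1+E_2=I$), and substituting the formulas from Theorem \ref{prop:param} for the multiplicities $f,g$ and for $r,s$, each Krein parameter becomes an explicit rational expression in $k,r,s$. After clearing denominators and applying Proposition \ref{prop-par} to simplify, the two relevant non-negativity constraints take precisely the stated form.

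The main obstacle is the bookkeeping in the change-of-basis computation: one must keep careful track of the symmetric roles of $r$ and $s$ and see that the two non-trivial Krein parameters collapse into the neat compact form displayed in the proposition. Conceptually, however, no further idea is needed beyond the observation that the three eigenprojectors of an $SRG$ are positive semi-definite and that the Bose--Mesner algebra is Schur-closed, so that the Schur product theorem applies.
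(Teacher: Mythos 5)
The paper does not actually prove this proposition; it is stated as a cited result (Scott's theorem, which the surrounding text says was originally obtained ``using a result of Krein from harmonic analysis''), so there is no in-paper argument to compare yours against. That said, your route is the standard modern one -- Delsarte's Bose--Mesner-algebra formulation -- and it is sound: the span of $I$, $A$, $J-I-A$ is Schur-closed, the minimal idempotents $E_0,E_1,E_2$ are positive semi-definite, the Schur product theorem makes $E_i\circ E_j$ positive semi-definite, and testing against a unit vector in the image of $E_k$ isolates $q^k_{ij}\geq 0$. This is cleaner and more self-contained than the harmonic-analysis derivation the paper points to, and it generalizes immediately to arbitrary association schemes.

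Two caveats. First, the entire identification of the two non-trivial Krein parameters with the two displayed inequalities is asserted rather than computed; since the statement being proved \emph{is} those two explicit inequalities, the change-of-basis computation (expressing $E_1,E_2$ in terms of $I,A,J$ via $E_1=\frac{(A-sI)-(k-s)J/n}{r-s}$ and its counterpart, Schur-squaring, and re-expanding) is where all the content lives and cannot be waved through with ``after clearing denominators.'' You should also justify, rather than merely assert, that the remaining parameters $q^k_{ij}$ are automatically non-negative. Second, the decomposition $E_0=\frac{1}{n}J$ with $k$ a simple eigenvalue requires $\Gamma$ to be connected; the proposition says ``arbitrary'' SRG, so you should either invoke the paper's standing restriction to primitive SRGs or handle the imprimitive case separately.
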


This condition rules out several possible parameter sets for $SRGs$. For example, a $srg(144,78,52,30)$ is excluded by the first Krein condition and a $srg(28,9,0,4)$ does not satisfy the second one. \par
An $SRG$ is called a \textit{Krein graph} if the equality holds in one of Krein's conditions. Let us fix a vertex $v$ in some Krein graph. Then the subgraphs induced by neighbours of $v$ and by its non-neighbours are both strongly regular. These are called \textit{linked graphs} \cite{subcon}. \\
For example, in the Higman-Sims graph ($srg(100,22,0,6)$) the subgraph induced by non-neighbours of any vertex is the Mesner graph ($srg(77,16,0,6)$).

In this section we consider even more specific subclass of these remarkable structures, namely, triangle-free Krein graphs.

\begin{lema}\label{krein} \cite{cam_lintCR}
Let $\Gamma$ be a triangle-free Krein graph with $2<\mu<k$. Then, $\Gamma$ has a parameter set $((r^2+3r)^2,r^3+3r^2+r,0,r^2+r)$ if and only if any triple of vertices inducing a $\overline{K}_3$ in $\Gamma$ have exactly $r$ common neighbours. 
\end{lema}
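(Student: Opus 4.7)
The plan combines a double-counting argument with the algebraic content of the Krein equality. The first step is to reduce the Krein hypothesis to a condition on $(r,\mu)$. Let $k>r>0>s$ be the eigenvalues of $\Gamma$; since $\lambda=0$ one has $r+s=-\mu$, $rs=\mu-k$, and $k=r^2+\mu(r+1)$. Substituting these into the first Krein inequality of Proposition \ref{Krein} collapses it to $-(r-1)\le(\mu+r-1)^2$, which is strict whenever $r\ge 1$ and $\mu\ge 1$, so the Krein hypothesis is precisely that the second Krein inequality $(s+1)(k+s+2rs)=(k+s)(r+1)^2$ holds with equality. Substituting $s=-\mu-r$ and cancelling the common factor $r(\mu+r-1)$ reduces this equality to $\mu+r+1=(r+1)^2$, that is, $\mu=r(r+1)$, and then the parameter relation of Proposition \ref{prop-par} pins down $k=r^3+3r^2+r$ and $n=(r^2+3r)^2$. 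Thus the parameter set is automatically of the stated form with $r$ equal to the positive eigenvalue, and the lemma's real content is that this intrinsic $r$ coincides with the common-neighbour count of every independent triple.

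For the ($\Leftarrow$) direction, I double count the pairs $(v,\{x,y,z\})$ with $\{x,y,z\}\subseteq N(v)$. Since $\lambda=0$ every triple of neighbours of $v$ is independent, so the count is $n\binom{k}{3}$; the other count is $r\cdot T$ where $T$ is the number of independent triples, a quantity expressible in closed form via $n,k,\mu$ by the usual intersection-number argument. The equation $rT=n\binom{k}{3}$, together with $\mu=\rho(\rho+1)$ from the first step (with $\rho$ the positive eigenvalue), forces $r=\rho$, so the parameter set has the stated form with the same $r$ as in the hypothesis.

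For the ($\Rightarrow$) direction, assume the parameters are in the given family. The same double counting gives that the average common-neighbour count over $\overline{K}_3$ triples equals $r$. To promote the average to pointwise equality, I appeal to the Krein equality in its idempotent form $q_{22}^2=0$, that is, $E_2\circ E_2\in\mathrm{span}(E_0,E_1)$, where $E_0,E_1,E_2$ are the spectral idempotents of the Bose-Mesner algebra. Writing $|N(x)\cap N(y)\cap N(z)|=\sum_v A_{vx}A_{vy}A_{vz}$, expanding each factor via $A=kE_0+rE_1+sE_2$, and collecting terms will identify the variance of this triple-intersection count across independent triples as a nonnegative multiple of $q_{22}^2$. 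Vanishing of $q_{22}^2$ then kills the variance and forces every independent triple to attain the mean value $r$.

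The main obstacle will be this variance-from-Krein computation: one has to isolate, among the terms produced by the spectral expansion of $\sum_v A_{vx}A_{vy}A_{vz}$, the piece whose coefficient is $q_{22}^2$, using only standard identities in the Bose-Mesner algebra. The parameter-family reduction in the first step and the double-counting identities are routine by comparison.
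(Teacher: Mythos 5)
The paper does not actually prove this lemma --- it is imported from \cite{cam_lintCR} with no argument given --- so I am judging your proposal against the classical proof it is implicitly citing, and your outline is essentially that classical argument (Cameron--Goethals--Seidel via vanishing Krein parameters and triple intersection numbers). Your opening reduction is correct and complete as stated: with $\lambda=0$ one gets $k+r=(r+1)(r+\mu)$ and $k+s+2rs=-r(\mu+r+1)$, the first Krein inequality is always strict, and equality in the second collapses to $\mu+r+1=(r+1)^2$, after which Proposition \ref{prop-par} yields $k=r^3+3r^2+r$ and $n=(r^2+3r)^2$. The double count $rT=n\binom{k}{3}$ is also the right mechanism for both directions, though you should actually verify the polynomial identity $n\binom{k}{3}=\rho\,T$ in the parameter family (it does hold; e.g.\ for $(16,5,0,2)$ both sides are $160$, for $(100,22,0,6)$ both give average $2$).

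The one place your plan is stated imprecisely is the ``variance'' step. The exact identity is
$\sum_{x,y,z}\bigl(\sum_{w}E_2(w,x)E_2(w,y)E_2(w,z)\bigr)^2=\tfrac{m_2}{n}\,q_{22}^{2}$,
where the outer sum runs over \emph{all} ordered triples, not only the independent ones; so the variance over independent triples is bounded by, not equal to, a multiple of $q_{22}^{2}$. That is still enough: $q_{22}^{2}=0$ forces $S_{222}(x,y,z):=\sum_w E_2(w,x)E_2(w,y)E_2(w,z)=0$ for every individual triple. You then need one further (routine but non-optional) step that your sketch glosses over: for a fixed independent triple, the eight sums $S_{ijl}$ with $i,j,l\in\{1,2\}$ form a $2\times2\times2$ array with all two-dimensional margins determined by the pairwise data, hence a one-parameter family $S_{ijl}=S_{ijl}^{0}+(-1)^{i+j+l}t$; expanding $|N(x)\cap N(y)\cap N(z)|=\sum_v A_{vx}A_{vy}A_{vz}$ then shows the count equals a parameter-determined constant plus $(s-r)^3\,t$, and since $(s-r)^3\neq0$ the vanishing of $S_{222}$ pins the count to a single value, which the double count identifies as $r$. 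With that substitution for the variance claim your proof is sound.
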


Note that the value $r$ from the above lemma fully describes a parameter set for a given Krein graph, such that one can simply use notation $Kr(r)$. Some authors use the notation $NL_r(r^2+3r)$ instead. The reason is that Krein graphs, as we describe them, are members of the family of negative Latin square graphs, $NL_r(n)$. It is easy to show that $NL_r(n)$ is a Krein graph if and only if $n=r^2+3r$.\par
It is well known that for $r\in\{1,2\}$ there are unique graphs $Kr(r)$, the Clebsch graph ($srg(16,5,0,2)$) and the Higman-Sims graph. There are no more known examples of $Kr(r)$ but the following statement has to be satisfied. 

\begin{prop}\cite{GavMak}\label{GavMak}
\begin{itemize}
	\item There is no $Kr(3)$.
	\item $Kr(r)$ does not contain $K_{r,r}$ as an induced subgraph for $r\geq 9$
\end{itemize}
\end{prop}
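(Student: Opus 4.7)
The plan is to handle the two items separately: the second reduces to a clean double counting argument built on Lemma \ref{krein}, while the first is a genuine nonexistence problem that requires machinery beyond elementary counting.

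For the second item, I assume for contradiction that $Kr(r)$, $r\geq 9$, contains an induced $K_{r,r}$ with parts $A$ and $B$ of size $r$. Both $A$ and $B$ are independent sets, so Lemma \ref{krein} tells me that any three vertices of $A$ have exactly $r$ common neighbours; since the $r$ vertices of $B$ are already common neighbours of all of $A$, the common neighbourhood of any triple from $A$ is precisely $B$. In particular, no vertex outside $A\cup B$ is adjacent to three or more vertices of $A$. Double counting pairs from $A$ and edges from $A$ into the external set $X=V(\Gamma)\setminus(A\cup B)$, and using that each pair of vertices of $A$ has $\mu-r=r^2$ common neighbours in $X$ while each $a\in A$ has $k-r$ neighbours in $X$, I recover the exact numbers of vertices of $X$ adjacent to exactly one, respectively exactly two, vertices of $A$.

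The triangle-free hypothesis supplies the final structural input: if a vertex $v\in X$ had neighbours both in $A$ and in $B$, it would close a triangle with one of the edges of the $K_{r,r}$, so $X$ partitions into vertices touching only $A$, only $B$, or neither. The count from $B$ is symmetric, and the number of vertices of $X$ touching neither part turns out to be a cubic in $r$ which is negative exactly when $r\geq 9$; that negativity is the contradiction. Everything here is routine once one sees that the $\overline{K}_3$ condition rigidly constrains the incidences of external vertices to $A$.

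The first item is substantially harder. The parameter set $(324,57,0,12)$ passes the integrality criterion of Theorem \ref{prop:param} and the Krein bounds of Proposition \ref{Krein}, so no direct spectral obstruction is available. My plan would be to fix a vertex $v$ and push Lemma \ref{krein} with $r=3$ to its limit: combined with $\lambda=0$ and $\mu=12$, the requirement that every $\overline{K}_3$ have exactly three common neighbours should force rigid intersection patterns among neighbourhoods of pairs and triples inside the second neighbourhood of $v$, and I would iterate these constraints hoping to build a small, highly constrained configuration that cannot coexist with the global counts. The principal obstacle, and the reason the result is nontrivial, is that no single counting inequality fails cleanly as in the second item; one has to extract a more subtle invariant, typically a nonintegral multiplicity of an auxiliary eigenvalue or a design-theoretic quantity attached to a derived substructure, in the style of Gavrilyuk and Makhnev. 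I would expect either that kind of spectral refinement, or a computer-assisted enumeration of admissible local configurations, to be the actual finisher.
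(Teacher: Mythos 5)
The paper itself gives no proof of this proposition: it is imported verbatim from \cite{GavMak}, so there is no in-paper argument to compare yours against. Judged on its own terms, your treatment of the second item is correct and complete. The chain of deductions holds up: Lemma \ref{krein} forces the common neighbourhood of every triple of $A$ to be exactly $B$, hence every vertex of $X=V(\Gamma)\setminus(A\cup B)$ meets $A$ in at most two vertices; counting gives $\binom{r}{2}r^2$ vertices of $X$ meeting $A$ twice and $r(k-r)-2\binom{r}{2}r^2=4r^3$ meeting it once; triangle-freeness splits $X$ into $X_A$, $X_B$, $X_0$; and
$$|X_0|=(r^2+3r)^2-2r-2\Bigl(4r^3+\tfrac{r^4-r^3}{2}\Bigr)=-r^3+9r^2-2r,$$
which is indeed negative precisely for integers $r\geq 9$. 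This is a clean, self-contained derivation of a fact the paper only cites.

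The gap is the first item. You do not prove that $Kr(3)=srg(324,57,0,12)$ does not exist; you describe a programme (iterating the $\overline{K}_3$-regularity constraint locally around a vertex and hoping for a rigidity contradiction or a nonintegral auxiliary multiplicity) and candidly defer the finisher to "spectral refinement or computer-assisted enumeration." That assessment of the difficulty is accurate --- the parameter set survives the integrality and Krein tests, and the actual Gavrilyuk--Makhnev argument is a lengthy structural analysis --- but a programme is not a proof, and your local-configuration sketch contains no identified contradiction, so the first bullet remains an appeal to the cited reference rather than an independent verification. If you want the write-up to stand alone, you should either carry out that analysis in full or explicitly restrict your claim of proof to the second item and cite \cite{GavMak} for the first, as the thesis does.
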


\subsubsection{The absolute bound}\label{abs-rep}
Let $\Gamma$ be an $srg(v,k,\lambda,\mu)$ and let $A$ be its adjacency matrix. We will use the notation $l$ for the number of vertices that are not adjacent to a given vertex. As it was already shown in the theorem \ref{prop:param} the matrix $A$ has exactly three eigenvalues:
\begin{enumerate}
	\item A trivial eigenvalue $k$ of multiplicity $1$.
	\item A positive eigenvalue $r$ of multiplicity $f$.
	\item A negative eigenvalue $s$ of multiplicity $g$.
\end{enumerate}

Any vertex $v_i$ of $\Gamma$ can be identified with the vector constructed by the $i^{th}$ row of the adjacency matrix $A$. As we consider only primitive SRGs, all eigenvalues of $A$ are non-zero. It follows that the set of vertices represented by vectors $\{v_1,\dots,v_n\}$ forms a basis of $\mathbb{R}$. Let $V_0$, $V_1$, $V_2$ denote eigenspaces for the eigenvalues $k$, $r$, $s$ respectively, and let $E_1$, $E_2$ and $E_3$ be their orthogonal projection onto the eigenspace $V_i$. The adjacency matrix $A$ can be then rewritten as the sum 

$$A=kE_0+rE_1+sE_2.$$

Now, let us restrict our attention to one of the non-trivial eigenspaces, $V_2$. We will consider here (normalized) projections of vectors $v_i$ into this eigenspace:

$$x_i=\frac{v_iE_2}{||v_iE_2||}$$

It is easy to verify the following property of these vectors.

\begin{prop} 
Let $\Gamma$ be a primitive $srg(n,k,\lambda, \mu)$ wit eigenvalues $k>r>s$ and for $i\in\{1,\dots,n\}$ let $x_i$ denote orthogonal projection of the vertex $v_i$ onto the eigenspace corresponding to the eigenvalue $s$. Then the inner product of the vectors $x_i$ and $x_j$ satisfies:

$$
					\begin{array}{ll*{3}{l}}
					x_i\cdot x_j &=& \left\{\begin{array}{ll}
									1 & \textrm{ if } i=j,\\
									p & \textrm{ if } i\sim j,\\
									q & \textrm{ otherwise, }
									\end{array}\right.
					\end{array}
$$

where $p=\frac{s}{k}$ and $q=-\frac{(s+1)}{(n-k-1)}$.\\
Moreover, as $\Gamma$ is connected and not complete multipartite, $x_i\neq x_j$ for $i\neq j$.
\end{prop}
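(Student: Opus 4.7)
The plan is to push everything through the orthogonal projection matrix $E_2$ onto the $s$-eigenspace. Since $E_2$ is real symmetric and idempotent, the $i$-th row $u_i$ of $E_2$ satisfies $u_i\cdot u_j = (E_2^2)_{ij} = (E_2)_{ij}$, and in particular $\|u_i\|^2 = (E_2)_{ii}$. Because $AE_2 = sE_2$, the row $v_iE_2$ equals $s u_i$, so after normalisation $x_i$ is $u_i/\|u_i\|$ up to a global sign independent of $i$; this sign cancels in every inner product, giving
\begin{equation*}
x_i\cdot x_j \;=\; \frac{(E_2)_{ij}}{(E_2)_{ii}}.
\end{equation*}
The whole proposition therefore reduces to identifying the three possible types of entries of $E_2$.

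To make these explicit, I would combine the three spectral identities
\begin{equation*}
A \;=\; kE_0 + rE_1 + sE_2, \qquad I \;=\; E_0 + E_1 + E_2, \qquad J \;=\; nE_0,
\end{equation*}
eliminating $E_0$ and $E_1$ to express $E_2$ as an explicit linear combination of $A$, $I$ and $J$. From this formula the diagonal entry, the off-diagonal entry for an adjacent pair, and the off-diagonal entry for a non-adjacent pair can be read off directly. Taking the trace (equivalently, using $f+g=n-1$ together with $k+rf+sg=0$) gives $(E_2)_{ii} = g/n$. The main computational step is then to verify that the two ratios $(E_2)_{ij}/(E_2)_{ii}$ collapse to $s/k$ and $-(s+1)/(n-k-1)$ respectively. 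This is the principal obstacle, in the sense of being the longest symbolic manipulation, but it is essentially a routine algebraic check: the collapse uses the Vieta relations $r+s = \lambda-\mu$ and $rs = \mu-k$ (coming from the characteristic polynomial restricted to the orthogonal complement of the all-ones vector), together with the parameter identity $k(k-\lambda-1) = (n-k-1)\mu$ of Proposition~\ref{prop-par}, to reduce each ratio to the claimed closed form.

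For the final distinctness claim I would argue by contradiction. If $x_i = x_j$ for some $i\neq j$, then $u_i = u_j$, so $E_2(e_i - e_j) = 0$ and $e_i - e_j$ has zero component in $V_2$. Since $e_i - e_j$ is automatically orthogonal to the all-ones vector, it must lie entirely in the $r$-eigenspace $V_1$. Comparing the $i$-th coordinate in $A(e_i - e_j) = r(e_i - e_j)$ then forces $-A_{ij} = r$. Under the hypotheses that $\Gamma$ is connected ($\mu\geq 1$) and not complete multipartite ($\mu<k$) the graph is primitive, whence $rs = \mu - k < 0$ together with $r>s$ yields $r > 0$; this contradicts $A_{ij}\in\{0,1\}$ and finishes the plan.
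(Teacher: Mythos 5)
Your proposal is correct and complete. The paper offers no argument here (the proposition is introduced with ``It is easy to verify\dots''), so there is nothing to compare against; your route through the explicit expression $E_2=\frac{1}{s-r}\bigl(A-rI-\frac{k-r}{n}J\bigr)$ in the span of $I$, $A$, $J$ is the standard one, and all the key points check out: $v_iE_2=su_i$ with the sign of $s$ cancelling, the constancy of the diagonal giving $(E_2)_{ii}=g/n$, the reduction of both ratios to $p=s/k$ and $q=-(s+1)/(n-k-1)$ via $r+s=\lambda-\mu$, $rs=\mu-k$ and Proposition~\ref{prop-par}, and the distinctness argument forcing $-A_{ij}=r$ against $r>0$ for a primitive SRG.
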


The proposition above says that all the projected vectors lie on a unit sphere in a $g$-dimensional Euclidean vector space with a specific distance from each other. Delsarte et al. prove the following result:

\begin{veta}[The absolute bound \cite{del}]
Let $g\neq 1$ be the multiplicity of an eigenvalue of a $srg(n,k,\lambda,\mu)$. Then

$$n\leq {{g+2}\choose{2}}-1$$

\end{veta}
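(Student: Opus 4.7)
The plan is to exploit the spherical $2$-distance structure of the projections $x_1, \dots, x_n$: by the preceding proposition they are distinct unit vectors in the $g$-dimensional eigenspace $V_2$ whose pairwise inner products take only the two values $p$ and $q$. The bound will follow from the polynomial-rank method that is standard for spherical two-distance sets (Delsarte--Goethals--Seidel).

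First, I would associate to each index $i$ the polynomial
$$F_i(y) = (y\cdot x_i - p)(y\cdot x_i - q),$$
which is quadratic in the coordinates of $y \in \mathbb{R}^g$. Direct evaluation gives $F_i(x_j) = 0$ for $j \neq i$ and $F_i(x_i) = (1-p)(1-q)$. The latter is nonzero, because $x_i \cdot x_j = 1$ for some $j \neq i$ would force $x_i = x_j$ by equality in Cauchy--Schwarz, contradicting the distinctness asserted in the preceding proposition. The usual evaluation argument (substitute each $x_j$ into $\sum_i c_i F_i \equiv 0$) then shows that $F_1,\dots,F_n$ are linearly independent as functions on the unit sphere $S^{g-1} \subset \mathbb{R}^g$.

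Second, I would bound the ambient space containing the $F_i$. Each $F_i$ expands as a combination of the constant $1$, the linear form $y \mapsto y\cdot x_i$, and the pure quadratic form $y \mapsto (y\cdot x_i)^2$; hence every $F_i$ lies in the space $\mathcal{P}_{\leq 2}$ of polynomials of total degree at most $2$ in $g$ variables, which has dimension $\binom{g+2}{2}$. Since all of the $x_j$ lie on the unit sphere, it suffices to work with restrictions to $S^{g-1}$: the kernel of the restriction map $\mathcal{P}_{\leq 2} \to C(S^{g-1})$ is exactly the one-dimensional subspace spanned by $y_1^2 + \dots + y_g^2 - 1$, so the image has dimension $\binom{g+2}{2}-1$. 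Combining the two steps yields $n \leq \binom{g+2}{2} - 1$.

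The only mildly delicate point is the kernel computation in the second step, where one has to verify that no polynomial of degree at most $2$ other than scalar multiples of $|y|^2 - 1$ can vanish identically on $S^{g-1}$. This is a standard fact about the coordinate ring of the sphere but deserves an explicit remark; the rest of the argument is a direct calculation with the concrete values of $p$ and $q$ supplied by the preceding proposition.
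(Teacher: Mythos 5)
Your argument is correct. The paper itself gives no proof of this theorem --- it is stated with a citation to Delsarte et al. --- so there is nothing to compare line by line; what you have written is the standard Delsarte--Goethals--Seidel bound for spherical two-distance sets, and it goes through. The key points all check out: $F_i(y)=(y\cdot x_i)^2-(p+q)(y\cdot x_i)+pq$ lies in the space of polynomials of degree at most $2$ in $g$ variables, which has dimension $\binom{g+2}{2}$; the evaluation argument gives linear independence of $F_1,\dots,F_n$ as functions on $S^{g-1}$ because $(1-p)(1-q)\neq 0$, which you correctly reduce to the distinctness of the $x_i$ (both values $p$ and $q$ are actually realized since a primitive SRG has edges and non-edges); and the restriction map to the sphere kills exactly the span of $|y|^2-1$, since a degree-$\le 2$ polynomial vanishing on $S^{g-1}$ must be divisible by the irreducible quadric $|y|^2-1$ (this is where $g\neq 1$ is comfortably covered, as the quadric is irreducible for $g\ge 2$). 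One small remark: the paper's preceding proposition states the inner-product values only for the projection onto the eigenspace of $s$, while the theorem allows $g$ to be the multiplicity of either nontrivial eigenvalue; the identical argument applies to the other eigenspace with $p=r/k$ and $q=-(r+1)/(n-k-1)$, so you may want to note that your choice of $V_2$ is without loss of generality.
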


The absolute bound can be translated into an inequality on parameters of an SRG and gives a strong tool for handling feasible parameters obtained by spectral techniques. There are several parameter sets that have been ruled out using the absolute bound. Eight of them were for graphs on up to $100$ vertices, which illustrates the power of this method. The quadruple $(50,21,4,12)$ and the triangle free case $(64,21,0,10)$ are examples of these cases.


\subsubsection{Spherical $t$-designs }

\begin{definicia}
Let $X=\{x_1,x_2,\dots,x_n\}$ be a set of vectors lying on a unit sphere $S^{g-1}$ in an Euclidean space $\mathbb{R}^g$. Then, X is called a spherical $t$-design if, for any polynomial function $F$ of degree at most $t$, we have:

$$\frac{1}{n}\sum_{i=1}^{n}F(x_i)=\frac{1}{vol(S^{g-1})}\int_{^{g-1}}F(x)dx$$
\end{definicia}

In other words, the finite set $X$ "approximates the sphere up to degree $t$".\\
For small $t$, there is a mechanical interpretation. Place unit masses at the points of $X$. Then $X$ is a spherical $1$-design if and only if the centre of mass is at the origin, and is a spherical $2$-design if, in addition, the inertia ellipsoid is a sphere (that is, the moments of inertia are all equal and the products of
inertia are zero).

\begin{veta}
Let $\Gamma$ be a connected primitive SRG and let $X$ be the normalized projection of the vectors representing its vertices onto a non-trivial eigenspace. Then

\begin{enumerate}[i.)]
	\item $X$ is a spherical $2$-design.
	\item $X$ is a spherical $3$-design iff the Krein condition corresponding to this eigenspace is attained.
	\item $X$ is a spherical $4$-design iff the absolute bound is attained.
	\item $X$ is never a spherical $5$-design.
\end{enumerate}
\end{veta}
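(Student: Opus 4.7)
The plan is to invoke the Delsarte--Goethals--Seidel characterization of spherical $t$-designs: a finite set $X \subset S^{g-1}$ is a $t$-design iff $\sum_{x \in X} h(x) = 0$ for every harmonic polynomial $h$ of degree $\ell$ with $1 \le \ell \le t$, equivalently iff $\sum_{i,i'} G_\ell^{(g)}(\langle x_i, x_{i'} \rangle) = 0$ for all such $\ell$, where $G_\ell^{(g)}$ is the Gegenbauer polynomial of degree $\ell$ in dimension $g$. The central structural fact is that, up to the scalar factor $n/g$, the Gram matrix of $X$ coincides with the primitive idempotent $E_j$ of the Bose--Mesner algebra projecting onto the chosen eigenspace; in particular the inner products $\langle x_i, x_{i'} \rangle$ take only the three values $1, p, q$ with multiplicities $n$, $nk$, $n(n-k-1)$ respectively.

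For (i), the first moment $\sum_i x_i$ vanishes because each $x_i$ is orthogonal to the all-ones eigenvector spanning $V_0$. For the second moment, the identity $E_j^2 = E_j$ yields $\sum_i x_i x_i^\top = (n/g)\,\mathrm{Id}_{V_j}$, which is the defining property of a spherical $2$-design.

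For (ii) and (iii), I would expand $\sum_{i,i'} G_\ell^{(g)}(\langle x_i, x_{i'} \rangle)$ for $\ell \in \{3,4\}$ using the three inner-product values and the multiplicities listed above; each such sum becomes an explicit polynomial in $n, k, r, s$. After substituting the eigenvalue and multiplicity formulas of Theorem \ref{prop:param}, the condition for $\ell = 3$ reduces, up to a positive factor, to equality in one of the two Krein inequalities of Proposition \ref{Krein} (the choice depending on which non-trivial eigenspace was used), proving (ii). For $\ell = 4$ the same expansion simplifies to $n = \binom{g+2}{2} - 1$, i.e., attainment of the absolute bound, proving (iii). The cleanest way to run these simplifications is to identify $\sum_{i,i'} G_\ell^{(g)}(\langle x_i, x_{i'} \rangle)$ with a coefficient of the Hadamard power $E_j^{\circ \ell}$ expanded in the basis of primitive idempotents; these coefficients are precisely the Krein numbers $q_{jj\cdots j}^k$, so the design conditions become vanishings of specific Krein numbers, and the desired equalities follow from the known defining relations of the algebra. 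The main obstacle is this algebraic identification: one must verify carefully that the Gegenbauer expansion really aligns with the Krein parameter in each degree and that the nonnegative factors extracted along the way do not vanish.

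For (iv), since the inner products take only the two non-trivial values $p$ and $q$, $X$ is a spherical $2$-distance set. A theorem of Delsarte--Goethals--Seidel asserts that a spherical $s$-distance set can carry at most a $(2s)$-design; specialized to $s = 2$ this forbids $X$ from ever being a $5$-design, establishing (iv).
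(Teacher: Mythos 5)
The paper does not actually prove this theorem; it is quoted as a known result (it goes back to Delsarte--Goethals--Seidel and Cameron--Goethals--Seidel), so there is no internal proof to compare against. Your outline is the standard argument for it and is essentially sound. Parts (i) and (iv) are complete as written: for (i) you should just add the observation that the diagonal of the idempotent $E_j$ is constant, so all the norms $\|v_iE_j\|$ are equal and the normalization does not disturb the identity $\sum_i x_ix_i^{\top}=(n/g)\,\mathrm{Id}$; for (iv) you should note that primitivity is exactly what guarantees $p\neq q$ (equality would force $s=-1$, $k=n-1$), so $X$ genuinely is a $2$-distance set and the Delsarte--Goethals--Seidel bound $t\le 2s$ applies. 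The only place where your proposal is a plan rather than a proof is (ii)--(iii): the identification of $\sum_{i,i'}G_{\ell}^{(g)}(\langle x_i,x_{i'}\rangle)$ with a positive multiple of the Krein number $q_{jj}^{j}$ for $\ell=3$, and with the defect $\binom{g+2}{2}-1-n$ for $\ell=4$, is asserted but not carried out. That identification is correct and is precisely how the classical proof runs (the degree-$\ell$ harmonic moment of $X$ is governed by the coefficient of $E_j$ in the Hadamard power $E_j^{\circ\ell}$, and the nonnegativity of the extracted factors is what makes the equivalences clean), but as it stands this is the entire content of statements (ii) and (iii), so you would need to either perform the three-value Gegenbauer expansion explicitly in terms of $n,k,r,s$ or cite the Krein-parameter formula $q_{jj}^{j}$ for strongly regular graphs. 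With that computation supplied, the proof is complete.
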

\newpage
 \section{Triangle-free SRGs}

As the last part of previous section indicated, the family of triangle-free strongly regular graphs (tfSRGs) is interesting in many ways. This section focusses exactly on these graphs. The triangle-free condition in our definition means that the parameter $\lambda$ of such SRG equals zero. Simplest examples of such graphs are complete bipartite graphs $K_{m,m}$. As these graphs are imprimitive we will not consider them further on.
\par The integral criterion for parameter $\mu \notin \{2,4,6\}$ of tfSRG leaves only finitely many admissible values of $k$. Since there are infinitely many values of $k$ which satisfy the integral condition \cite{elz} for each $\mu\in\{2,4,6\}$, tfSRGs are together represented by an infinite family of feasible parameter sets. 
\par On the other hand there are only seven known examples of tfSRGs. Three of them are members of the famous family of Moore graphs with diameter two. Their sets of parameters are $(5,2,0,1)$ (pentagon), $(10,3,0,1)$ (Petersen graph) and $(50,7,0,1)$ (Hoffman-Singleton graph). This family contains only one more parameters set, $(3250,57,0,1)$, but there is no known example of such a graph.
Parameters of the remaining four known tfSRGs are $(16,5,0,2)$ (Clebsch graph), $(56,10,0,2)$ (Sims-Gewirtz graph), $(77,16,0,4)$ (Mesners $M_{22}$ graph) and $(100,22,0,6)$ (Mesner, Higman-Sims graph). Each of these graphs is uniquely determined by its parameters and it is unknown whether there are more tfSRGs. 

\begin{table}[h!]
\begin{center}
\begin{tabular}{r|| c| c| c| c| c| c| c}
			&	Pentagon&	Petersen	&	Clebsch	&	HoSi	&	SimGe	&	Mesner		&	HiSim\\
\hline
\hline
Pentagon	&	$1$		&	$12$		&	$192$	&	$1260$		&	$8060$		&	$88704$		&	$443520$\\
\hline
Petersen	&			&	$1$			&	$16$	&	$525$		&	$13440$		&	$1921920$	&	$35481600$\\
\hline
Clebsch		&			&				&	$1$		&	$0$			&	$0$			&	$0$			&	$924000$\\	
\hline
HoSi		&			&				&			&	$1$			&	$0$			&	$0$ 		&	$704$\\	
\hline
SimGe		&			&				&			&				&	$1$			&	$22$		&	$1030$\\	
\hline
Mesner		&			&				&			&				&				&	$1$			&	$100$\\	
\hline
HiSim		&			&				&			&				&				&				&	$1$\\	
\end{tabular}
\end{center}
\caption{Number of tfSRGs inside of tfSRGs \cite{matan}}
\label{matan}
\end{table}

Note that all the known primitive $tfSRGs$ (except Higman-Sims graph) can be found as induced subgraphs of some larger one \cite{matan}. Known results are presented in the table \ref{matan}. The remarkable fact is that any known tfSRG does not contains $K_{3,3}$ as induced subgraph. On the other hand, each $Kr(r)$ with $r>2$ has to contain an induced $K_{3,3}$. According to lemma \ref{krein}, their number is determined uniquely. Hence, the question about induced $K_{3,3}$ in $tfSRGs$ are natural.\\
Obviously, the missing Moore graph does not contain an induced $K_{3,3}$. Yet there is another interesting open problem. It is not known whether it can contain an induced Petersen graph.
We have observed that feasible parameters for many tfSRGs of small orders have a specific form. They can be described as follows:
\begin{center}
\begin{align*}
k	&=N(2N+1+M(N+1)))=N(N(M+2)+(M+1)),\\
\mu	&=N(M+1),\\
s	&=-N(M+2),\\		
\end{align*}
\end{center}
where $N\in\mathbb{N}$, $M\in\mathbb{N}_0$ and $M\leq N$. This is summarized in Table \ref{tfsrg}. The cases that are ruled out by integral criterion are denoted by $*$.\\
An interesting observation is that the last two parameter sets of each column (with $N=M$ and $N=M+1$) represent a pair of linked graphs. The larger graph of a pair is always one of the Krein graphs described in the lemma \ref{krein}. According to the result of N. L. Bigs \cite{tfSRG}, there are no other pairs of such graphs:

\begin{veta}
The parameters of a linked pair $\Gamma$, $\Gamma'$ of tfSRGs must be of the form
\begin{align*}
k	&=N(N^2+3N+1),\\
\mu	&=N(N+1),\\
k'	&=N^2(N+2),\\
\mu'&=N^2,		
\end{align*}
where $N$ is a positive integer.
\end{veta}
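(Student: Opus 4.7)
The plan is to reduce the statement to Lemma \ref{krein} by first showing that the larger graph of the linked pair must itself be a triangle-free Krein graph, and then extracting the parameters of the smaller graph (the non-neighbour subgraph at a vertex) by a short local count.

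First I would fix notation: let $\Gamma$ denote the larger graph of the pair, with parameters $(v,k,0,\mu)$ and nontrivial eigenvalues $r > s$, and let $\Gamma'$ be the subgraph induced on the non-neighbours of a fixed vertex $x \in V(\Gamma)$, with parameters $(v',k',0,\mu')$. Triangle-freeness ($\lambda = 0$) together with the SRG eigenvalue relations gives $r+s = -\mu$ and $rs = \mu - k$, so both $k$ and $\mu$ can be written purely in $r,s$. Substituting into the second Krein equality
$$(s+1)(k + s + 2rs) = (k+s)(r+1)^2,$$
using the factorisations $k+s = -r(1+s)$ and $k+s+2rs = r(s-1)$, and cancelling $r$, one obtains $(s+1)(s + r^2 + 2r) = 0$. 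The factor $s+1 = 0$ yields only degenerate (non-primitive) solutions, and the first Krein equality can be ruled out analogously, leaving $s = -r(r+2)$. Setting $N := r$, one obtains
$$\mu = r(r+1) = N(N+1), \qquad k = r(r^2+3r+1) = N(N^2+3N+1),$$
which is exactly the claimed form of the parameters of $\Gamma$ and coincides with the $Kr(r)$ parametrisation of Lemma \ref{krein}.

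Second, I would compute the parameters of $\Gamma'$ by local counting at $x$. Every non-neighbour $w$ of $x$ shares $\mu$ common neighbours with $x$, all of which lie in $N(x)$; hence $w$ has exactly $k-\mu$ neighbours in $V(\Gamma')$, giving
$$k' = k - \mu = N(N^2+3N+1) - N(N+1) = N^2(N+2).$$
For $\mu'$, pick two non-adjacent $w_1, w_2 \in V(\Gamma')$. Triangle-freeness of $\Gamma$ together with $w_1, w_2 \notin N(x)$ and $w_1 \not\sim w_2$ ensures that $\{x,w_1,w_2\}$ induces a $\overline{K}_3$ in $\Gamma$. By Lemma \ref{krein}, such a triple has exactly $r = N$ common neighbours in $\Gamma$, all lying in $N(x)$. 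Since $w_1,w_2$ share $\mu = N(N+1)$ common neighbours in $\Gamma$ altogether, exactly $\mu - N = N^2$ of them lie in $V(\Gamma')$, proving $\mu' = N^2$.

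The main obstacle is the initial algebraic reduction of the Krein equality to $s = -r(r+2)$: although purely polynomial, one must identify the right factorisations and discard the extraneous $s = -1$ branch using primitivity. A secondary care point is to argue at the outset that $\Gamma$ really is the ambient Krein graph of the pair rather than just a linked subgraph of some larger Krein graph; this uses the fact that a triangle-free Krein graph has empty neighbourhoods at each vertex, so the only way a linked pair can consist of two non-trivial tfSRGs is to take the ambient tfKrein graph itself together with its non-neighbour subgraph. Once these points are in hand, the rest of the argument is a direct computation.
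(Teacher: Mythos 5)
The paper itself does not prove this statement --- it is quoted from Biggs \cite{tfSRG} --- so there is no in-text proof to compare against; I can only assess your argument on its own terms. The algebra is correct: with $\lambda=0$ one has $r+s=-\mu$ and $rs=\mu-k$, your factorisations $k+s=-r(1+s)$ and $k+s+2rs=r(s-1)$ check out, and equality in the second Krein condition does reduce to $r(s+1)(s+r^2+2r)=0$, hence $s=-r(r+2)$, $\mu=r(r+1)$, $k=r(r^2+3r+1)$; the count $k'=k-\mu=N^2(N+2)$ is also right. Be aware, however, that Biggs' own proof is purely combinatorial (a double count of paths of length two through $N(x)$ combined with the degree identity for $\Gamma'$), whereas your route imports the Krein machinery. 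It is valid only if ``linked pair'' is read, as the paper's definition suggests, so that the Krein equality is part of the hypothesis; if one instead starts from the weaker hypothesis that the second subconstituent is merely strongly regular, then the implication ``second subconstituent strongly regular $\Rightarrow$ Krein equality'' is itself a nontrivial theorem (Cameron--Goethals--Seidel) that your proof silently assumes.

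Two concrete soft spots. First, your derivation of $\mu'$ invokes Lemma \ref{krein}, whose hypothesis is $2<\mu<k$; for $N=1$ (the Clebsch/Petersen pair) one has $\mu=2$, so the lemma does not apply as stated. This is easily repaired without the lemma: since $\Gamma'$ is by hypothesis an SRG, Proposition \ref{prop-par} applied to $\Gamma'$ gives $k'(k'-1)=(n'-k'-1)\mu'$ with $n'=k(k-1)/\mu$ and $k'=k-\mu$, and the quotient factors (via $N^4+4N^3+4N^2-N-2=(N+2)(N^3+2N^2-1)$) as $\mu'=N^2$ for every $N$, with no case distinction. Second, you set $N:=r$ without arguing that $N$ is a positive integer, which is part of the claim; note that the pentagon satisfies $s=-r(r+2)$ with irrational $r$, so you must explicitly exclude the conference-graph case (for $\lambda=0$ it forces $\mu=1$ and a degenerate second subconstituent $K_2$) before appealing to integrality of the eigenvalues.
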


Since there is no graph for parameters $(324,57,0,12)$ (proposition \ref{GavMak}), the existence of the smaller graph $\Gamma'$ of a linked pair may not guarantee the existence of the bigger one.

\begin{table}[h!]
\begin{center}
\scriptsize{
\begin{tabular}{r|| c| c| c| c| c| c}
			&	N=1				&	2				&	3				&	4				&	5				&	6\\
\hline
\hline
M=0			&	$(10,3,0,1)$	&	$(56,10,0,2)$	&	$(162,21,0,3)$	&	$(352,36,0,4)$	&	$(650,55,0,5)$	&	$(1080,78,0,6)$\\
\hline
1			&	$(16,5,0,2)$	&	$(77,16,0,4)$	&	$(210,33,0,6)$	&	$(442,56,0,8)^*$&	$(800,85,0,10)$	&	$(1311,120,0,12)^*$\\
\hline
2			&					&	$(100,22,0,6)$	&	$(266,45,0,9)$	&	$(552,76,0,12)$	&$(990,115,0,15)^*$	&	$(1612,162,0,18)^*$\\
\hline
3			&					&					&	$(324,57,0,12)$	&	$(667,96,0,16)$	&$(1190,145,0,20)$	&	$(1930.5,204,0,24)$\\
\hline
4			&					&					&					&	$(784,116,0,20)$&$(1394,175,0,25)$	&	$(2256,246,0,30)$\\
\hline
5			&					&					&					&					&$(1600,205,0,30)$	&	$(2585,288,0,36)$\\
\hline
6			&					&					&					&					&					&	$(2916,330,0,42)$\\
\end{tabular}
}
\end{center}
\caption{tfSRGs (* - $g$ is not integral)}
\label{tfsrg}
\end{table} 

There are only nine feasible parameter sets with $n\leq 1300$ that are not included in table \ref{tfsrg}, namely $(5,2,0,1)$, $(50,7,0,1)$, $(176,25,0,4)$, $(352,26,0,2)$, $(392,46,0,6)$, $(638,49,0,4)$, $(704,37,0,2)$, $(1073,64,0,4)$ and $(1276,50,0,2)$.
\newpage
\section{Small subgraphs in SRGs}

Let $\Gamma$ be a $srg(n,k,\lambda,\mu)$. For every graph $G$ on $t$ vertices there is a value representing the number of copies of $G$ as an induced subgraph in $\Gamma$.
This value is, for some $G$, constant and depends only on the parameters $n$, $k$, $\lambda$ and $\mu$. For example, it is easy to derive a formula for the number of triangles in $\Gamma$. We know that any pair of adjacent vertices $(x,y)$ in $\Gamma$ has $\lambda$ common neighbors. The vertices $x$ and $y$ together with any of these neighbors induce a triangle in $\Gamma$. Therefore the total number of induced triangles in $\Gamma$ equals $\frac{nk}{2}\frac{\lambda}{3}$. If we continue with calculating numbers of other $3$-vertex subgraphs we obtain that $\Gamma$ contains $\left[{{n}\choose{2}}-\frac{nk}{2}\right]\mu$ subgraphs isomorphic to $K_{1,2}$. The number of induced subgraphs on three vertices with one edge is $\frac{nk}{2}(n-2k+\lambda)$. All the remaining subgraphs on three vertices are isomorphic to $\overline{K_3}$ and their number together with the previous numbers sum up to ${{n}\choose{3}}$.

\begin{lema}\label{3v}
The number of occurrences of any $3$-vertex graph as induced subgraph in $srg(n,k,\lambda,\mu)$ is determined uniquely by the parameters $n$, $k$, $\lambda$ and $\mu$. The exact numbers are summarized in the following table:
\begin{table}[h!]    
\begin{center}
\scalebox{1}{
        \begin{tabular}{c || c}
        	$G$ of order $3$				& The number of $G$ in $srg(n,k,\lambda,\mu)$\\
        \hline
        \hline
  \includegraphics[scale=0.15]{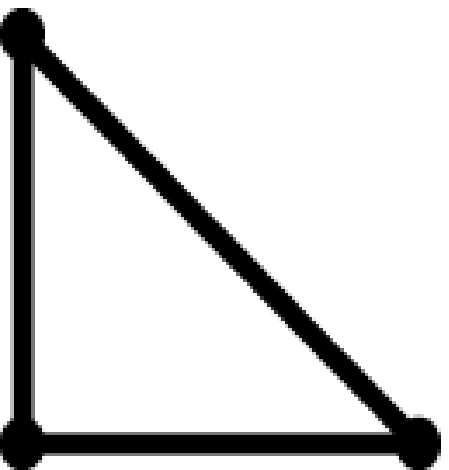}&      $\frac{nk}{2}\frac{\lambda}{3}$  \\
		\hline
  \includegraphics[scale=0.15]{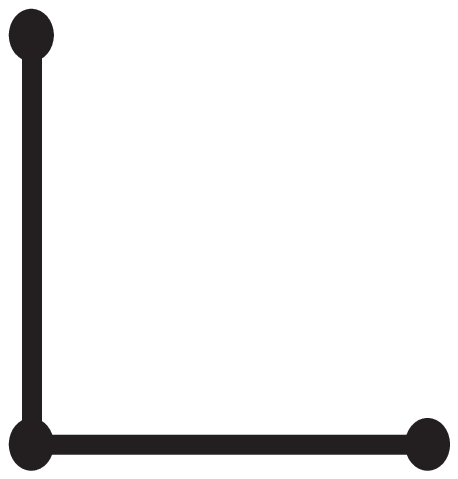}&	$\left[{{n}\choose{2}}-\frac{nk}{2}\right]\mu$	\\
        \hline
  \includegraphics[scale=0.15]{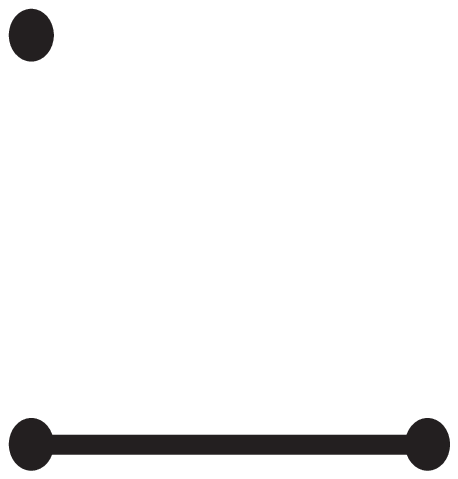}&		$\frac{nk}{2}(n-2k+\lambda)$	\\
        \hline
  \includegraphics[scale=0.15]{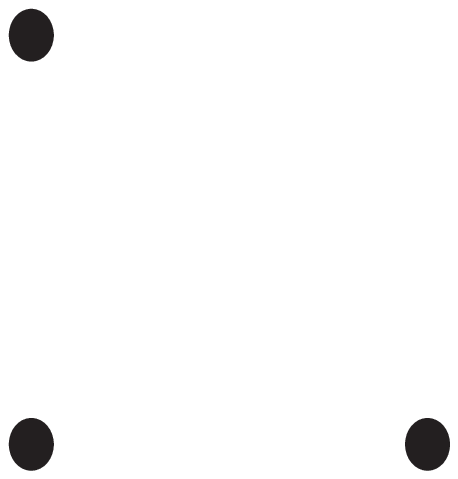}&	${{n}\choose{3}}-{{n}\choose{2}}\mu-\frac{nk}{2}\left[\frac{\lambda}{3}-\mu+(n-2k+\lambda)\right]$	\\
        \end{tabular}
}
\end{center}
    \end{table}
\end{lema}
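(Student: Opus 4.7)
The plan is to treat the four isomorphism types of graphs on three vertices separately, namely $K_3$, $K_{1,2}$, $K_2\cup K_1$, and $\overline{K_3}$, and in each case count the induced copies by double-counting through the defining parameters $k$, $\lambda$, $\mu$ of an $srg(n,k,\lambda,\mu)$. A preliminary observation I would use throughout is that $\Gamma$ has exactly $nk/2$ edges and $\binom{n}{2}-nk/2$ non-edges.

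For $K_3$, I would start from the fact that each edge of $\Gamma$ lies in exactly $\lambda$ triangles (by the SRG definition) and each triangle contains three edges, hence the count $\tfrac{nk}{2}\cdot\tfrac{\lambda}{3}$. For $K_{1,2}$, I would observe that an induced copy is determined by an unordered non-adjacent pair $\{x,y\}$ together with a common neighbour $z$; by the SRG property each such pair has exactly $\mu$ common neighbours, yielding $\bigl[\binom{n}{2}-\tfrac{nk}{2}\bigr]\mu$.

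The only computation requiring a little care is $K_2\cup K_1$. I would fix an edge $\{x,y\}$ and count the number of vertices $z\notin\{x,y\}$ with $z\not\sim x$ and $z\not\sim y$. Since $y\in N(x)$, $x\in N(y)$, and $|N(x)\cap N(y)|=\lambda$, inclusion-exclusion gives $|N(x)\cup N(y)|=2k-\lambda$, and this union already contains both $x$ and $y$, so the count of admissible $z$ is $n-(2k-\lambda)=n-2k+\lambda$. Since each induced $K_2\cup K_1$ arises from a unique edge, summing over the $nk/2$ edges gives $\tfrac{nk}{2}(n-2k+\lambda)$.

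Finally, for $\overline{K_3}$, I would use that the four counts partition the $\binom{n}{3}$ unordered triples of vertices, so the remaining count is obtained by subtracting the three formulas above from $\binom{n}{3}$ and collecting terms. The main obstacle is the $K_2\cup K_1$ calculation: one must remember that $x$ and $y$ are themselves in each other's neighbourhoods, and that we are counting induced (not just arbitrary) subgraphs, so the $\overline{K_3}$ case should be handled via subtraction rather than a fresh double-count.
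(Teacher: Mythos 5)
Your proposal is correct and follows essentially the same route as the paper: triangles counted via edges and $\lambda$, induced $K_{1,2}$ via non-edges and $\mu$, the one-edge graphs via the edge count times $n-2k+\lambda$, and $\overline{K_3}$ by subtraction from $\binom{n}{3}$. The only difference is that you spell out the inclusion--exclusion step $|N(x)\cup N(y)|=2k-\lambda$ explicitly, which the paper leaves implicit.
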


With increasing the order of subgraphs, graphs whose numbers of copies in $\Gamma$ are not determined uniquely by $n$, $k$, $\lambda$ and $\mu$ start to appear. To illustrate this we look back into the two $srg(16,6,2,2)$ on figures \ref{16_trojuh}, \ref{16_sestuh}. The first of them contains eight $K_4$ subgraphs, while in the second graph there are none. 
\par In this chapter we begin our analysis of subgraphs in $tfSRGs$. However, the method can be extended also for general SRGs. 

To show that this strategy can be successful we refer to \cite{BrHe}, where the authors significantly use the number of induced $C_4$ to prove uniqueness of Gewirtz graph. 

\begin{prop}\label{prop:pre4}
Let $\Gamma$ be a $srg(n,k,0,\mu)$ and let $G$ be a graph on at most $4$ vertices. Then the number of induced subgraphs isomorphic to $G$ depends only on parameters $n$, $k$ and $\mu$.
\end{prop}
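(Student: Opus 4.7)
The plan is to enumerate the $4$-vertex graphs that can possibly occur as induced subgraphs of $\Gamma$, and to show each of their counts is forced by $n$, $k$ and $\mu$. Since $\Gamma$ is triangle-free, only the seven triangle-free graphs on $4$ vertices are relevant: $\overline{K_4}$, $K_2+2K_1$, $P_3+K_1$, $2K_2$, $K_{1,3}$, $P_4$ and $C_4$. Writing $m(G)$ for the number of induced copies of $G$, I will pin down these seven values via four short double-counting arguments combined with the trivial identity $\sum_G m(G)=\binom{n}{4}$.

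I would begin with the two easiest direct counts. In a triangle-free graph every neighbourhood is independent, so any vertex together with any three of its $k$ neighbours induces a $K_{1,3}$, whose centre is moreover unique; this gives $m(K_{1,3})=n\binom{k}{3}$. For $C_4$, the $\mu$ common neighbours of any non-adjacent pair are automatically pairwise non-adjacent (again by $\lambda=0$), so any two of them close an induced $C_4$; since each $C_4$ contains two diagonal non-edges, $m(C_4)=\tfrac12\bigl(\binom{n}{2}-\tfrac{nk}{2}\bigr)\binom{\mu}{2}$.

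Next I would extract $m(P_4)$ and $m(2K_2)$. The number of ordered tuples $(a,b,c,d)$ with $a\sim b\sim c\sim d$ equals $nk(k-1)^2$: after an oriented edge $(b,c)$ is fixed (there are $nk$ choices) one has $k-1$ choices for $a\in N(b)\setminus\{c\}$ and $k-1$ for $d\in N(c)\setminus\{b\}$, while the remaining conditions (the non-edges $ac$, $bd$ and $a\neq d$) are forced by $N(b)\cap N(c)=\emptyset$. Each such tuple induces either a $C_4$ (which admits $8$ such traversals) or a $P_4$ (which admits $2$), so $m(P_4)=\tfrac12\bigl[nk(k-1)^2-8\,m(C_4)\bigr]$. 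Likewise, the number of unordered pairs of vertex-disjoint edges equals $\binom{nk/2}{2}-n\binom{k}{2}$, and among the seven graphs only $2K_2$, $P_4$ and $C_4$ contain such a pair, with multiplicities $1$, $1$ and $2$ respectively; this determines $m(2K_2)$.

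The remaining three counts follow from the incidence identity
\[
\sum_G m(G)\,c_{G,H}=m(H)\,(n-3),
\]
valid for every triangle-free $3$-vertex graph $H$, where $c_{G,H}$ denotes the number of induced copies of $H$ in $G$; both sides count pairs $(S,v)$ with $S\cong H$ induced and $v\notin S$, and $m(H)$ is already known by Lemma~\ref{3v}. Applied with $H=K_{1,2}$ it isolates $m(P_3+K_1)$ after substituting the already known terms, applied with $H=K_2+K_1$ it isolates $m(K_2+2K_1)$, and finally $m(\overline{K_4})$ is pinned down by $\sum_G m(G)=\binom{n}{4}$. The only delicate point in the whole plan is assembling the small table of coefficients $c_{G,H}$ correctly, but with the seven graphs explicitly laid out this is routine case-checking rather than a genuine obstacle; the conceptual content rests entirely on the two $SRG$ inputs that $\lambda=0$ kills cross-edges in local neighbourhoods, and that any non-adjacent pair has exactly $\mu$ common neighbours.
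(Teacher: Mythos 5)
Your proof is correct and follows essentially the same route as the paper's: both enumerate the seven triangle-free graphs on four vertices and pin down their counts by double-counting extensions of the known $3$-vertex subgraph counts, exploiting that $\lambda=0$ makes neighbourhoods independent and that non-adjacent pairs have exactly $\mu$ common neighbours. Your individual identities (the direct counts of $K_{1,3}$ and $C_4$, the ordered $3$-walk count, the disjoint-edge-pair count) are just repackagings of the paper's equations $(k-2)a=3P_3$, $(\mu-1)a=4P_1$, $2(k-1)a=8P_1+2P_2$ and $kb=2P_2+4P_4$, and your closing incidence identity is exactly the paper's ``add an arbitrary vertex'' relation.
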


\begin{proof}
Let $G$ be a subgraph of $\Gamma$ with vertex set $V_G$.
The case where $|V_G|=3$ is already solved in lemma \ref{3v}.
Therefore we only need to show that the proposition holds for any $G$ with $|V_G|=4$. It is easy to see that there are seven non-isomorphic triangle-free graphs on four vertices. Let us denote them $G_1,G_2,\dots,G_7$ in correspondence with the figure \ref{4_vertex}. Then their respective occurrences in $\Gamma$ will be represented by $P_1, P_2, \dots, P_7$.

\begin{figure}[h!]
\begin{center}
\includegraphics[width=1\textwidth]{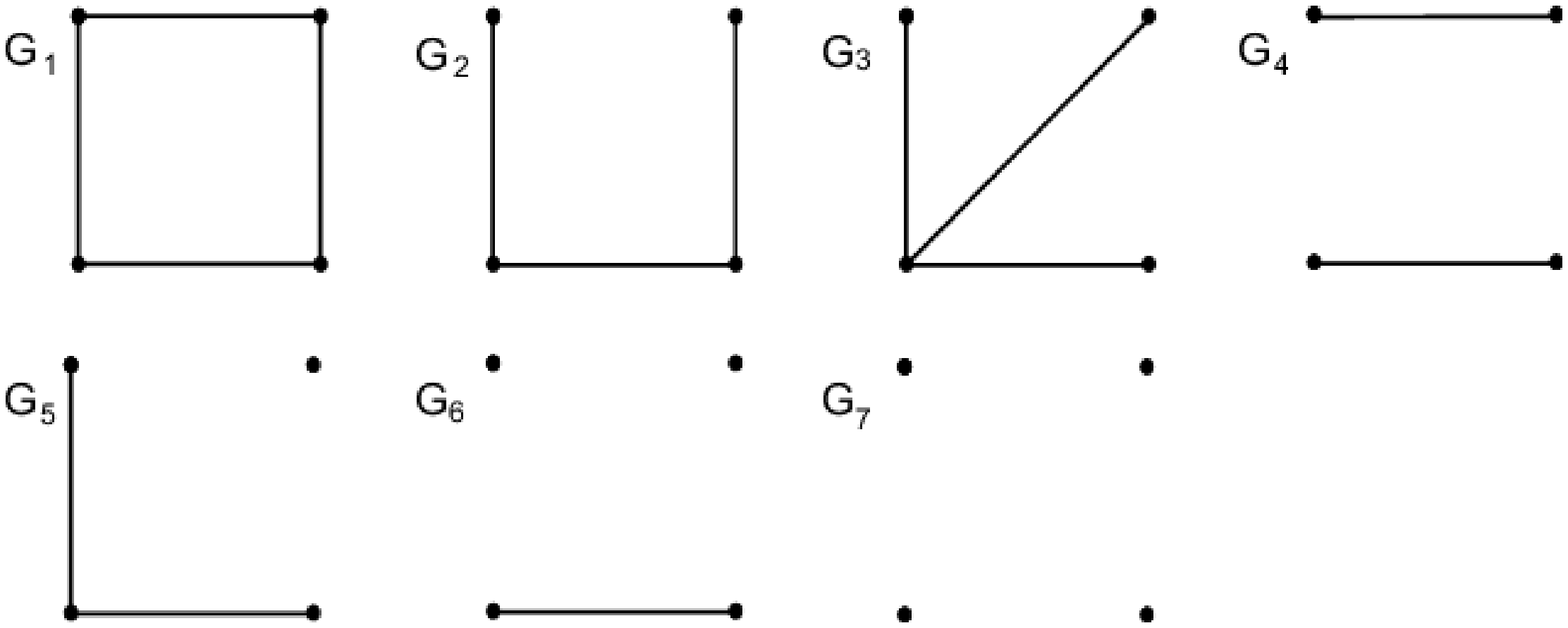}
\caption{ }
\label{4_vertex}
\end{center}
\end{figure}

When we compute values $P_i$ we use properties of all possible graphs $G_i-\{v\}$, where $v$ is some vertex of $G_i$. Since the order of $G_i-\{v\}$ is for any $v$ equal $3$ its number of copies is determined by parameters of $\Gamma$ (Lemma \ref{3v}). There are only three triangle-free graphs on three vertices, namely $K_{1,2}$, a graph with a single edge, and $\overline{K_3}$. Let us denote the numbers copies of these graphs in $\Gamma$ as $a$, $b$ and $c$, respectively.\\
Let $H$ be a fixed subgraph of $\Gamma$ isomorphic to $K_{1,2}$. Its vertex set $V_H=\{v_1,v_2,v_3\}$ is labeled such that $v_1$ represents a vertex of order $2$ in $H$. Now we can derive numbers of triangle-free graphs in $\Gamma$ which can be constructed by adding one more vertex into $K_{1,2}$ induced in $\Gamma$. There are four such non-isomorphic graphs.

\begin{enumerate}
\item The quadrangle ($G_1$). Let us focus on subgraph $H$ of $\Gamma$. There are exactly $\mu-1$ vertices in $\Gamma-H$ such that their addition $V_H$ gives a quadrangle. On the other hand, any quadrangle of $\Gamma$ contains four induced $K_{1,2}$. Therefore we obtain $a(\mu-1)=4P_1$, from which the total number of quadrangles in $\Gamma$ follows.

\item The path of length three ($G_2$). By adding a vertex adjacent either to $v_2$ or to $v_3$ into $V_H$ we create exactly a graph isomorphic to $G_2$. Each of vertices $v_2$, $v_3$ have $k-1$ neighbours in $\Gamma-H$. We have to remember that $\mu -1$ of them form together with $V_H$ a quadrangle and it is necessary to subtract them. As any $G_2$ contains two copies of $K_{1,2}$ it follows that $2(k-1)a-2\cdot 4P_1=2P_2$. Since we already know the value $P_1$, it is easy to derive also $P_2$.

\item The star $K_{1,3}$ ($G_3$). In this case we need to add a vertex adjacent to $v_1$ to $V_H$. Since $\Gamma$ is triangle-free the new vertex cannot have another neighbour in the created graph. There are exactly $k-2$ such vertices in $\Gamma$. Each $K_{1,3}$ contains $K_{1,2}$ three times. Hence, we obtain an equality $(k-2)a=3P_3$.

\item The disconnected case consisting of $K_{1,2}$ and one isolated vertex ($G_5$). Since the total number of $4$-vertex subgraphs containing a fixed $K_{1,2}$ in $\Gamma$ is $n-3$ we have an equality $(n-3)a=4P_1+2P_2+3P_3+P_5$. Hence, we can easily obtain also the remaining value $P_5$.
\end{enumerate}

There remain only three graphs for which we have not determined the numbers of occurrences in $\Gamma$ yet. Recall that the number of copies of 3-vertex graph with a single edge in $\Gamma$ is denoted by $b$. It contains one isolated vertex. Since the valency of $\Gamma$ is $k$ we obtain $kb=2P_2+4P_4$. This already gives the value $P_4$. The equation $(n-3)b=2P_2+4P_4+2P_5+2P_6$, that can be derived similarly to the step 4 for $K_{1,2}$, determines the value $P_6$. 
The last $4$-vertex graph for which the number of occurrences in $\Gamma$ is still unknown is $\overline{K_4}$ ($G_7$). To obtain it we use the equation ${{n}\choose{4}}=\sum_{i=1}^7P_i$ which counts the total number of $4$-vertex subgraphs in $\Gamma$.
The following table summarizes the numbers of induced $G_i$ in $\Gamma$, for each $i\in\{1,2,\dots,7\}$.

\begin{table}[h!]    
\begin{center}
\scalebox{1}{
        \begin{tabular}{c || c}
        	$G_i$				& The number of $G_i$ in $srg(n,k,0,\mu)$\\
        \hline
        \hline
  \includegraphics[scale=0.15]{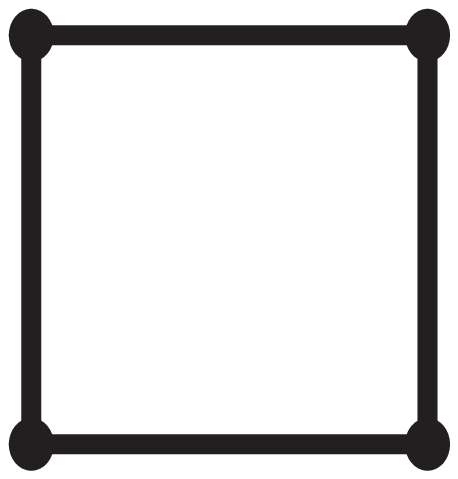}&    $\frac{(\mu-1)\mu}{4}\left[{{n}\choose{2}}-\frac{nk}{2}\right]$  \\
		\hline
  \includegraphics[scale=0.15]{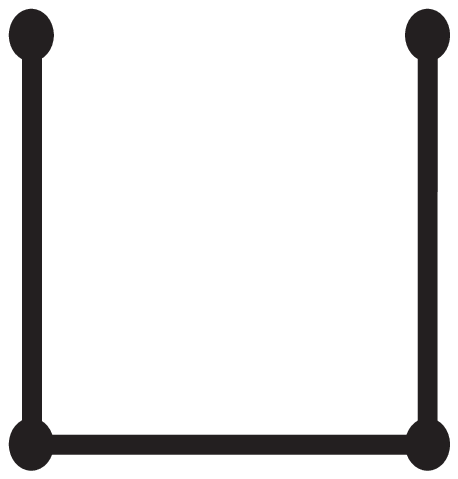}&	$(k-\mu)\mu \left[{{n}\choose{2}}-\frac{nk}{2}\right]$	\\
        \hline
  \includegraphics[scale=0.15]{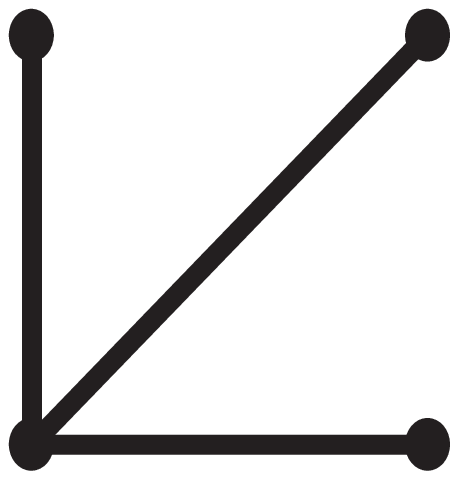}&	$\frac{(k-2)\mu}{3}\left[{{n}\choose{2}}-\frac{nk}{2}\right]$	\\
        \hline
  \includegraphics[scale=0.15]{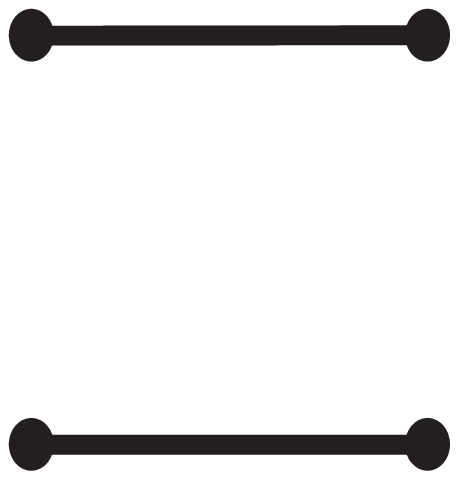}&	$\frac{nk^2}{8}(n-2k+\lambda)  
  													-\frac{(k-\mu)\mu}{2}\left[{{n}\choose{2}}-\frac{nk}{2}\right]$	\\
          \hline
  \includegraphics[scale=0.15]{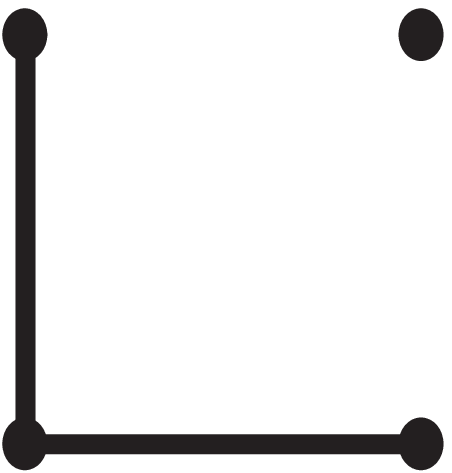}&	$(n-3k+\mu)\mu\left[{{n}\choose{2}}-\frac{nk}{2}\right]$	\\
            \hline
  \includegraphics[scale=0.15]{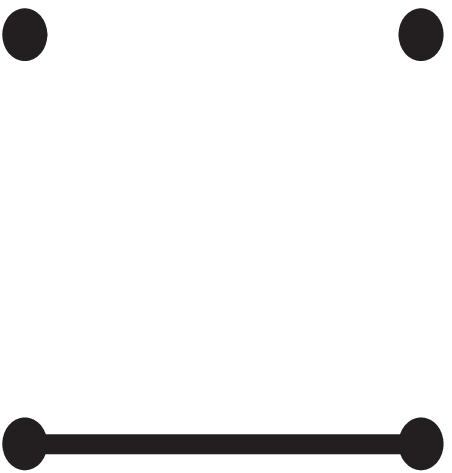}&	$\frac{nk(n-k-3)}{4}(n-2k+\lambda)
  													-(n-3k+\mu)\mu\left[{{n}\choose{2}}-\frac{nk}{2}\right]$	\\
        \hline
  \includegraphics[scale=0.15]{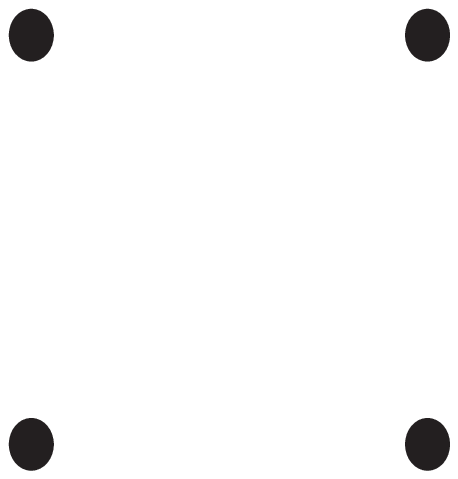}&	${{n}\choose{4}}-\frac{(10k-9\mu-11)\mu}{12}\left[{{n}\choose{2}}-\frac{nk}{2}\right]
  													+\frac{nk(k-2\mu+6)}{8}(n-2k+\lambda)$	\\
        \end{tabular}
}
\end{center}
    \end{table}

\end{proof}

\begin{table}[h!]    
\begin{center}
\scalebox{1}{
        \begin{tabular}{l || r|c|c|c|c|c|c|c}

	&	& \includegraphics[scale=0.15]{figures/4_1.eps} & \includegraphics[scale=0.15]{figures/4_2.eps} & \includegraphics[scale=0.15]{figures/4_3.eps} & \includegraphics[scale=0.15]{figures/4_4.eps} & \includegraphics[scale=0.15]{figures/4_5.eps} & \includegraphics[scale=0.15]{figures/4_6.eps} & \includegraphics[scale=0.15]{figures/4_7.eps} \\
&\centering{LHS} &	$G_1$	& $G_2$	&	$G_3$ &	$G_4$	&	$G_5$ &	$G_6$	&	$G_7$\\
			\hline\hline
  \includegraphics[scale=0.15]{figures/3_1.eps}&	$(n-3)a$	&	$4$	& $2$	&	$3$ &		&	$1$ &		&	\\
        \hline
  \includegraphics[scale=0.15]{figures/3_1.eps}&	$(k-2)a$ 	&		&		&	$3$	&		&		&		&	\\
		\hline
  \includegraphics[scale=0.15]{figures/3_1.eps}&	$2(k-1)a$	&$8$	& $2$	&		&		&		&		&	\\
        \hline
  \includegraphics[scale=0.15]{figures/3_1.eps}&	$(\mu-1)a$	&	$4$	& 		&		&		&		&		&	\\
        \hline
  \includegraphics[scale=0.15]{figures/3_2.eps}&	$(n-3)b$	&		& $2$	&		&	$4$	&	$2$ &	$2$	&	\\
        \hline
  \includegraphics[scale=0.15]{figures/3_2.eps}&	$2(k-1)b$	&		& $2$	&		&		&	$2$	&		&	\\
        \hline
  \includegraphics[scale=0.15]{figures/3_2.eps}&	$kb$		&		&	$2$	&		&	$4$	&		&		&	\\
        \hline
  \includegraphics[scale=0.15]{figures/3_2.eps}&	$2\mu b$	&		&	$2$	&		&		&		&		&	\\
        \hline
  \includegraphics[scale=0.15]{figures/3_3.eps}&	$(n-3)c$	&		&		&	$1$	&		&	$1$	&	$2$	&$4$\\
        \hline
  \includegraphics[scale=0.15]{figures/3_3.eps}&	$3kc$	&		&		&	$3$	&		&	$2$	&	$2$	&	\\
        \hline
  \includegraphics[scale=0.15]{figures/3_3.eps}&	$3\mu c$	&		&		&	$3$	&		&	$1$	&		&	\\
        \hline
&${{n}\choose{4}}$ &	$1$	&	$1$	&	$1$	&	$1$	&	$1$	&	$1$	&$1$\\ 
        \end{tabular}
}
\end{center}
\caption{Equations for $4$-vertex subgraphs in $SRG(n,k,0,\mu)$}
\label{4vrcholy}
    \end{table}
\newpage

Each step of the previous proof can be expressed using a single equation. For example, when we have computed numbers of graphs coming from $K_{1,2}$ we have used the equations:

\begin{enumerate}
	\item $a(\mu-1)=4P_1+0P_2+0P_3+0P_4+0P_5+0P_6+0P_7,$
	\item $2(k-1)a=8P_1+2P_2+0P_3+0P_4+0P_5+0P_6+0P_7,$
	\item $(k-2)a=0P_1+0P_2+3P_3+0P_4+0P_5+0P_6+0P_7,$
	\item $(n-3)a=4P_1+2P_2+3P_3+0P_4+1P_5+0P_6+0P_7.$
\end{enumerate}

There is a similar set of equations for each of the three triangle free graphs of order $3$. All of them are written down in the table \ref{4vrcholy}. It is created as a matrix of these equations. The first column represents the left side of equations. Remaining columns refer to the graph in the heading row and appropriate column. Each row is indexed by $3$-vertex graph to which a given equation corresponds. For example, first four rows are dedicated to the graph $K_{1,2}$ in the reverse order like they were constructed in the proof.
The last row displays the total number of $4$-vertex subgraphs in $srg(n,k,0,\mu)$. In the similar way we can understand all rows of the table. It is possible to divide equations of this system into three types (except the last one). First, there are equations for adding an arbitrary vertex. The second type of equations represents relations between graphs that appear by adding a neighbor of some vertex into given $3$-vertex graph. Equations of the last type describe situation of adding a common neighbor of a pair of non-adjacent vertices into given $3$-vertex graph.

Obviously, the solution of this system consists of numbers of occurrences of all seven $4$-vertex triangle-free graphs as induced subgraphs in $\Gamma$. As it was already shown in the proof of \ref{prop:pre4}, not all of these equations are necessary. The reason is that some of them are linear combinations of others. In this particular case, the rank of the system of created equations equals the number of triangle free graphs of order $4$. Hence, we obtain a unique solution. But this does not hold in general. Our aim is to generalize this method for an arbitrary SRG and for subgraphs of higher order where the structure is more complicated. We need therefore to work with all equations. Here, the rank of the system can be lower and its solution not unique. Clearly, the number of new parameters is equal to the number of graphs constructed by this method minus rank of the created system.

\subsection{Equations for a general SRG}
Generally, it is not easy to calculate the numbers of all $t$-vertex subgraphs of some $srg(n,k,\lambda,\mu)$.In the special case where $\lambda$ equals zero, these numbers are for $t>5$ not determined uniquely by $n$, $k$ and $\mu$ but one can observe some dependencies between occurrences of subgraphs. We develop a method for deriving all dependencies between numbers of $t$-vertex subgraphs for any small value of $t$.
\par The method we propose is a generalization of the one in the proof of \ref{prop:pre4}. It is based on a recursion. We already know that it is possible to compute the number of all $3$-vertex subgraphs in $\Gamma=srg(n,k,\lambda,\mu)$ using only numbers of edges and non-edges. 

Suppose that the numbers of all subgraps on $t-1$ vertices in $\Gamma$ are given.
Similarly to the triangle-free case for $t=4$ we use strong regularity to construct special equations for each of $(t-1)$-vertex graph.\\
Let $G$ be a graph on $t-1$ vertices. 
\begin{enumerate}
	\item We have one equation saying which types of subgraphs can be created by adding one arbitrary vertex from $\Gamma$ to subgraphs isomorphic to $G$. The left-hand side of this equation contains the number of occurrences of $G$ as induced subgraph in $\Gamma$ multiplied by $n-(t-1)$. The right-hand side consists of the sum of copies of all $t$-vertex graphs as induced subgraphs in $\Gamma$. Each of summands is multiplied by a constant saying how many times $G$ occurs in the appropriate graph.	
	\item For every vertex $v$ of $G$ there is an equation describing subgraphs, that can be obtained by adding a vertex adjacent to $v$.
	\item For any pair of adjacent vertices $x$ and $y$ in $G$ we have an equation for subgraphs that can be created by adding a common neighbor of $x$ and $y$.
	\item For any pair of non-adjacent vertices $x,y$ in $G$ there is an equation for subgraphs that can be created by adding their common neighbors.
\end{enumerate}

The cases 2., 3. and 4. are more complicated. It is necessary to care about situations when a new vertex has degree higher than $1$ (in the second case) or $2$ (the third and fourth case). This will be explained in more details in Section \ref{sec-alg}. This procedure has to be repeated for each graph of order $t-1$. At least we create an equation which says that the total number of $t$-vertex subgraphs of $SRG(n,k,0,\mu)$ is ${{n}\choose{t}}$. The solution of this system consists of numbers of all $t$-vertex graphs induced in $\Gamma$. In the case when the solution is not unique we need extra parameters, that describe relations between $t$-vertex subgraphs.

The construction of equations itself could be tedious for higher values of $t$, not to mention that this system has to be solved as well. Therefore we created an algorithm which implements the above ideas. 
\newpage

\subsection{The algorithm}\label{sec-alg}
\begin{definicia}
For a graph $G$ we denote by $P_G(H)$ the number of copies of $G$ in a graph $H$ as induced subgraph.
\end{definicia}

In the following text we repeatedly use a value $P_G(\Gamma)$, where $\Gamma$ is a fixed SRG and $G$ varies. In these cases we can omit $\Gamma$ and simplify the notation on $P_G$ which corresponds also to the notation in the previous section.\\
\\
The input:
\begin{itemize}
	\item Parameters of the graph $\Gamma=srg(n,k,\lambda,\mu)$
	\item The list $L_0=\{G_1,G_2,\dots,G_{|L_0|}\}$ of all graphs of order $o-1$
	\item The values $P_G$ for every graph $G\in L_0$ 
	\item The list $L$ of graphs of order $o$
\end{itemize}
The output:
\begin{itemize}
	\item System of linear equations, which are satisfied by values $P_H$, for $H\in L$
\end{itemize}
The procedure:
\begin{enumerate}[(1)]
	\item Initialize i=1
	\item Take $G_i\in L_0$
	\item Find all $H$ of order $o$ containing $G_i$ as induced subgraph
	\item Determine the equation that follows from parameter $n$ of $\Gamma$ \label{1}
	\item Determine all equations that follow from parameter $k$ of $\Gamma$ \label{2}
	\item Determine all equations that follow from parameter $\lambda$ of $\Gamma$ \label{3}
	\item Determine all equations that follow from parameter $\mu$ of $\Gamma$ \label{4}
	\item Increase $i$ by 1.
	\item If $i\leq |L_0|$, return to step $(2)$
	\item Return:
		\subitem The system of linear equations in the form $Mx=b$ satisfied by $P_H$, for $H\in L$.
\end{enumerate}

The algorithm works with parameters $n$, $k$, $\lambda$ and $\mu$ as with abstract variables, therefore the solution is universal for any SRG. However, the matrix $M$ contains just integers and does not depend on the choice of SRG. Since the right hand side of the output is derived using $n$, $k$, $\lambda$ and $\mu$, the solution (possible values of $P_H$, for $H\in L$) can be also expressed by these parameters.\\
Steps \ref{1}, \ref{2}, \ref{3}, \ref{4} are provided using statements in the following proposition:

\subsubsection{Propositions used in the algorithm}

This section contains several proofs that help describe the method for constructing equations in steps \ref{1}, \ref{2}, \ref{3}, \ref{4} of the algorithm.\\
Denote by $\Gamma$ an arbitrary but fixed $srg(n,k,\lambda,\mu)$. During this part we need to distinguish between two types of graphs of a given (small) order. The first one is an induced subgraph of $\Gamma$, which is labeled in correspondence with labeling of vertices in $\Gamma$. For this we reserve upper case letters of the Greek alphabet, $\Delta$ or $\Omega$.  The second type is an abstract graph and in this case we use the notation $G$ or $H$.\\
As defined above, $P_G$ will be the number of induced copies of graph $G$ in $\Gamma$. It follows that

$$P_G=\sum_{\substack{\Delta\subseteq \Gamma \\ \Delta\simeq G}}1$$ 
(as we consider only induced subgraph by $\Delta\subseteq \Gamma$ we always mean that $\Delta$ is an induced subgraph of $\Gamma$.)

Let $Aut(G)$ be the automorphism group of the graph $G$. 
The decompositions of the sets $V_G$, $E_G$, $\overline{E}_G$ (the set of non-edges in $G$) into orbits according to the action of $Aut(G)$ on them will be denoted in the following way:
\begin{itemize}
	\item $V_G=\cup_{i}V^i_G$ 
		\subitem Here, each $V^i_G$ is regarded as the system of one-element subsets of $V_G$.
	\item $E_G=\cup_{i}E^i_G$ 
	\item $\overline{E}_G=\cup_{i}\overline{E}^i_G$
\end{itemize}

All the statements in the following lemma are well known in algebraic graph theory.

\begin{lema}
Let $G$ and $G'$ represent two isomorphic graphs. Let $V_G^1, V_G^2,\dots, V_G^a$ be the orbits of $V_G$ with respect to $Aut(G)$ and $V_{G'}^1, V_{G'}^2,\dots, V_{G'}^{a'}$ be the orbits of $V_{G'}$ for $Aut(G')$. Then
\begin{enumerate}
	\item $Aut(G)\simeq Aut(G')$,
	\item $a=a'$,
	\item There exists a permutation $\psi\in S_a$, such that for each isomorphism $\phi:G\longrightarrow G'$:
				$$(V^i_G)_\phi={V}^{i_\psi}_G,$$
	\item For each $i\in \{1,\dots,a\}$ and for each pair of vertices $v\in V^i_G$ and $v'\in {V}^{i_\psi}_{G'}$ there exists an isomorphism $\phi:G\longrightarrow G'$ such that:
				$$v\mapsto v'.$$
\end{enumerate}
\end{lema}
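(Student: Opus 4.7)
The plan is to fix an arbitrary reference isomorphism $\phi_0 : G \to G'$ and derive all four assertions by transporting structure along $\phi_0$, using the elementary observation that any two isomorphisms $G \to G'$ differ by an element of $Aut(G)$ (equivalently, of $Aut(G')$).

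For (1), I would check that the conjugation map $\Phi : Aut(G) \to Aut(G')$ given by $\alpha \mapsto \phi_0 \alpha \phi_0^{-1}$ is a group isomorphism, with inverse given by conjugation by $\phi_0^{-1}$; this is entirely routine. For (2), I would show that $\phi_0$ induces a bijection between $Aut(G)$-orbits on $V_G$ and $Aut(G')$-orbits on $V_{G'}$. Indeed, if $v_1, v_2 \in V_G^i$ with $\alpha(v_1) = v_2$ for some $\alpha \in Aut(G)$, then $\Phi(\alpha)$ sends $\phi_0(v_1)$ to $\phi_0(v_2)$, so $\phi_0(V_G^i)$ is contained in a single $Aut(G')$-orbit; the symmetric argument applied to $\phi_0^{-1}$ promotes the containment to an equality. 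This immediately yields $a = a'$.

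For (3), define $\psi \in S_a$ by the rule $\phi_0(V_G^i) = V_{G'}^{\psi(i)}$, which is well-defined by (2). For an arbitrary isomorphism $\phi : G \to G'$, the composition $\phi_0^{-1}\phi$ lies in $Aut(G)$, and every automorphism of $G$ stabilizes each orbit setwise (by the very definition of orbit). Hence $(\phi_0^{-1}\phi)(V_G^i) = V_G^i$, and applying $\phi_0$ to both sides gives $\phi(V_G^i) = V_{G'}^{\psi(i)}$, as required. For (4), given $v \in V_G^i$ and $v' \in V_{G'}^{\psi(i)}$, observe by (3) that $\phi_0(v) \in V_{G'}^{\psi(i)}$. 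Since $V_{G'}^{\psi(i)}$ is a single $Aut(G')$-orbit, some $\beta \in Aut(G')$ satisfies $\beta(\phi_0(v)) = v'$, so the composition $\beta \circ \phi_0$ is the desired isomorphism sending $v$ to $v'$.

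No step here presents a genuine obstacle — the argument is a straightforward exercise in transporting group actions along isomorphisms. The only point worth flagging is that $\psi$ must be independent of the chosen reference $\phi_0$, but this is precisely the content of (3) and follows from the orbit-preservation property of automorphisms used above.
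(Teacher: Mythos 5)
Your proof is correct. The paper does not actually prove this lemma --- it is stated without proof and flagged as ``well known in algebraic graph theory'' --- so there is no argument to compare against; your transport-of-structure argument (fix $\phi_0$, conjugate to get $Aut(G)\simeq Aut(G')$, push orbits forward, and use that $\phi_0^{-1}\phi\in Aut(G)$ stabilizes every orbit setwise) is the standard one and establishes all four parts. One cosmetic remark: in part (3) the paper's right-hand side $V^{i_\psi}_G$ is evidently a typo for $V^{i_\psi}_{G'}$, and your reading of it is the intended one.
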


We assume that the permutation $\psi$ for any pair of isomorphic graphs in the previous lemma is always the identity.\par
Let $G$ be a graph on $o-1$ vertices with orbits $V^1_G,\dots V^a_G$ of $V_G$ and let $\Delta$ be a subgraph of $\Gamma$ isomorphic to $G$. Let us fix a vertex $v\in V_\Delta^i$. Then there are $k-deg_\Delta(v)$ vertices in $\Gamma-\Delta$ that are adjacent to $v$. We will use the notation $\Omega$ for a subgraph $\Delta\cup u\subseteq \Gamma$ on $o$ vertices, where $v\sim u$.\\
The graph $\Gamma$ contains exactly $P_G$ copies of $G$. Each of them can be extended in $deg_G(v)|V_G^i|$ ways to a subgraph of $\Gamma$ of order $o$ by adding a vertex $u\sim v$, where $v\in V_\Delta^i$. Hence, the total number of created subgraphs of order $o$ is 

\begin{equation}\label{ls}
(k-deg_G(v))|V_G^i|P_G.
\end{equation}

Note that some subgraphs could be created more than once and we need to count the contribution of each subgraph $\Omega\subseteq\Gamma$ of order $o$ into the value \ref{ls}. If there is no vertex $u\in V_\Omega$ such that $\Omega-u\simeq G$, then the contribution of $\Omega$ is zero. In the other case, the contribution is the number of neighbors of $u\in V_\Omega$ in the orbit $V^i_G$, for each $u$ satisfying $\Omega-u\simeq G$. This value will be counted by a function $f(V^i_G,\Omega, u)$. Going through all subgraphs of order $o$ in $\Gamma$ we obtain the value:
\begin{equation}\label{rs}
\sum_{H}\sum_{j}f(V^i_G,H,u_j)|V_j(H)|P_{H},
\end{equation}
where $H$ is a graph of order $o$ and $u_j$ is a vertex from the orbit $V^j_H$ a the vertex set $V_H$. Clearly the values \ref{ls} and \ref{ls} have to be equal.\\
The same idea can be repeated also for orbits of $E_G$ and $\overline{E}_G$. Therefore, the function $f$ has to be defined also for these cases.\par
Let $S_G$ be an orbit of some of the sets $V_G$, $E_G$ and $\overline{E}_G$ with respect to $Aut(G)$ and let $H$ be a graph on $|V_G|+1$ vertices. Then the function $f$ will be defined as follows:

\bigskip

$$
					\begin{array}{ll*{3}{l}}
					f(S_G,H,u) &=& \left\{\begin{array}{ll}
					0 & \textrm{ if } G\not\simeq {H-u},\\
					\left|\{S\in S_G:u\sim v, \forall v \in S \}\right| & \textrm{ otherwise. }
					\end{array}\right.
					\end{array}
$$
\bigskip
\bigskip

The orbit of any subset of $V_G$ with respect to $Aut(G)$ is invariant under automorphisms of $G$. It follows that the function $f$ is well defined.\\
If we choose the set $S_G$ with no restrictions, $f$ could be ambiguous. For better understanding we can consider graphs $G$ and $H$ as in the figure \ref{fig_f}. Obviously, $G$ and ${H-u}$ are isomorphic. If $S_G=\{\{v_2\}\}$ the value $f(S_G,H,u)$ depends on the isomorphism $\theta$ between $G$ and $H-u$. In the case when $\theta(v_2)=u_2$ the value $f(S_G,H,u)$ equals $1$. But if $\theta(v_2)=u_1$ then $f(S_G,H,u)=0$. 

\begin{figure}[h!]
\begin{center}
\includegraphics[scale=0.45]{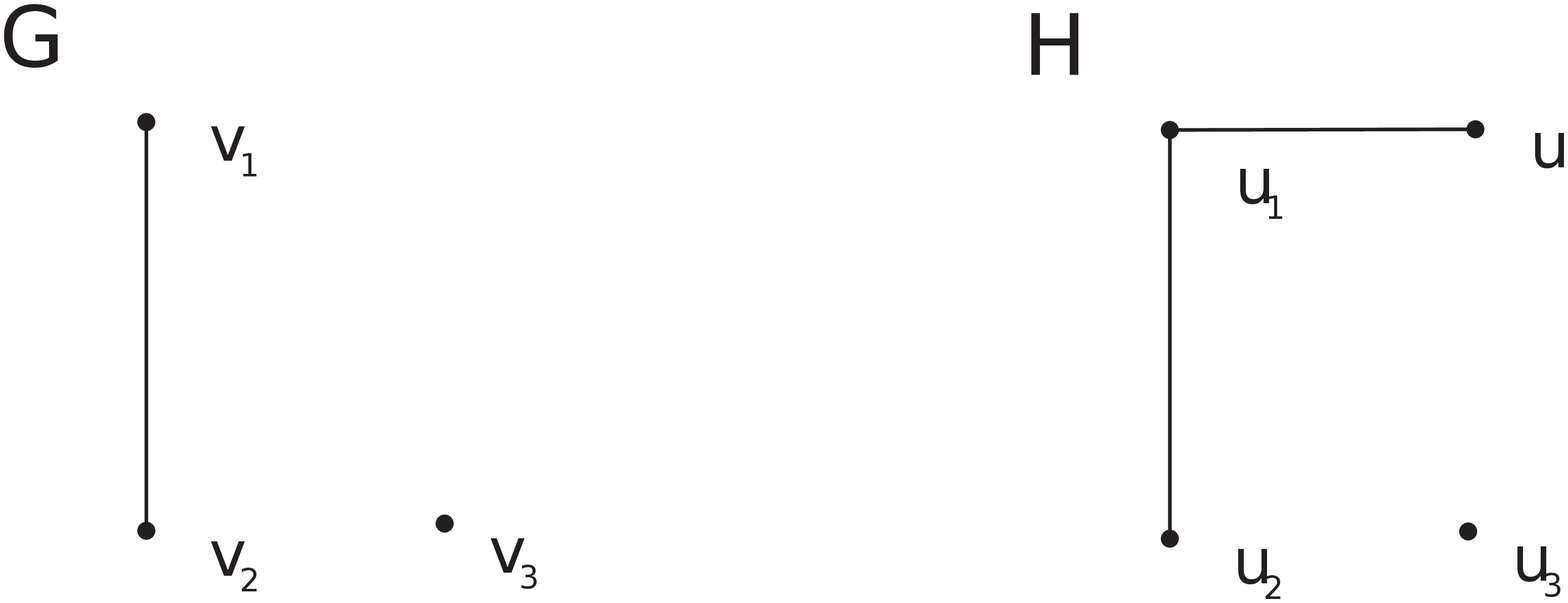}
\caption{}\label{fig_f}
\end{center}
\end{figure}

The degree of a set $S\subseteq V_G$ in $G$ is defined as the number of common neighbors for all vertices of $S$ in $G$:

$$deg_G(S)=|\{v\in V_G: u\sim v, \forall u \in S \}|$$

If some vertex $v$ is included in $\{v\in V_G: u\sim v, \forall u \in S \}$ we use the notation $v\sim S$. Hence, $deg_G(v)$ of a vertex $v\in V_G$ is just the degree of $v$ in the graph $G$.

\begin{lema}\label{lema-sumy}
Let us fix a subgraph $\Delta \subseteq \Gamma$ and let $S_\Delta$ be an orbit of some of sets $V_\Delta$, $E_\Delta$ and $\overline{E}_\Delta$. Then the following equality holds:

$$|S_\Delta|(\alpha-deg_\Delta(S))=\sum_{u\in V_\Gamma -V_\Delta} f(S_\Delta,\Delta\cup u,u),$$

where $\alpha\in\{k,\lambda,\mu\}$.
\end{lema}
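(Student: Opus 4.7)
The identity is a double count of the set
$$T \;=\; \bigl\{(S,u) : S \in S_\Delta,\ u \in V_\Gamma - V_\Delta,\ u \sim v \text{ for every } v \in S\bigr\}.$$
First I would count $|T|$ by fixing $S$, then by fixing $u$; matching the two counts yields the claim.

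Fixing $S \in S_\Delta$, the strong regularity of $\Gamma$ determines the total number of common neighbors of $S$ in $\Gamma$: this number is $k$ when $|S|=1$, is $\lambda$ when $S$ is an edge of $\Delta$ (hence an edge of $\Gamma$), and is $\mu$ when $S$ is a non-edge. Call this value $\alpha$. Since $S_\Delta$ is an orbit of $\mathrm{Aut}(\Delta)$, the value $deg_\Delta(S)$ does not depend on the chosen representative, so the number of common neighbors of $S$ lying outside $\Delta$ equals $\alpha - deg_\Delta(S)$. Summing over the $|S_\Delta|$ elements of the orbit gives $|T| = |S_\Delta|(\alpha - deg_\Delta(S))$, which is the left-hand side.

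Fixing $u \in V_\Gamma - V_\Delta$, note that $\Delta = (\Delta \cup u) - u$ is trivially isomorphic to itself, so the first case in the definition of $f$ never applies; hence $f(S_\Delta,\Delta \cup u,u)$ is exactly the number of $S \in S_\Delta$ whose vertices are all adjacent to $u$. Summing over $u \in V_\Gamma - V_\Delta$ thus recovers $|T|$, matching the right-hand side. The only care required is to pair each orbit type with the correct parameter $\alpha \in \{k,\lambda,\mu\}$, which is immediate from the definition of a strongly regular graph; I do not expect any genuine obstacle beyond this bookkeeping.
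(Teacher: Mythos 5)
Your proposal is correct and is essentially the paper's own argument: both count the pairs $(S,u)$ with $S\in S_\Delta$, $u\in V_\Gamma-V_\Delta$ and $u\sim S$ once by fixing $S$ (using strong regularity to get $\alpha-deg_\Delta(S)$ common neighbours outside $\Delta$) and once by fixing $u$ (recognising the inner count as $f(S_\Delta,\Delta\cup u,u)$). Your explicit remarks that $deg_\Delta(S)$ is constant on the orbit and that the zero case of $f$ never occurs are exactly the bookkeeping the paper leaves implicit.
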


\begin{proof}
Let us consider a fixed element $s\in S_\Delta$. Then, there are $\alpha -deg_\Delta(s)$ vertices in $\Gamma$ that are adjacent to each vertex of $s$, where $\alpha\in\{k,\lambda,\mu\}$. This gives an equality

$$\alpha-deg_\Delta(S)=
\sum_{\substack{u\in V_\Gamma-V_\Delta \\ u\sim S}} 1.$$

Since this holds for each set $S\in S_\Delta$, we can modify the equality as follows:

\begin{center}
\begin{align*}		
\sum_{S\in S_\Delta}(\alpha-deg_\Delta(S))&=\sum_{S\in S_\Delta}\left( \sum_{\substack{u\in V_\Gamma-V_\Delta \\ u\sim S}} 1 \right),\\
&\\
|S_\Delta|(\alpha-deg_\Delta(S))&=\sum_{S\in S_\Delta}\left( \sum_{\substack{u\in V_\Gamma-V_\Delta \\ u\sim S}} 1 \right).\\
\end{align*}
\end{center}

The contribution of each vertex $u\in V_\Gamma-V_\Delta$ to the double sum on the right-hand side of the equality is exactly $f(S_\Delta,\Delta\cup u,u)$, which eventually proves the statement of the lemma.
\end{proof}

Let us consider a fixed subgraph $\Delta$ of $\Gamma$ isomorphic to $G$. Let $H$ be a graph on $|V_G|+1$ vertices containing $G$ as an induced subgraph. Now we derive the relation between $\Delta$ and the value $P_H$. Clearly, the number of subgraphs in $\Gamma$ isomorphic to $H$, that contain $\Delta$ as induced subgraph depends on the choice of $\Delta$. Hence, this number can differ also for two subgraphs $\Delta_1$ and $\Delta_2$ both isomorphic to $G$. On the other hand, we can sum these numbers for all subgraphs $\Delta\subseteq \Gamma$ isomorphic to $G$ and derive a relation between this sum and the value $P_H$. Using the described idea and the strong regularity of $\Gamma$ we obtain the following proposition.

\begin{prop}\label{prop_rovnice}
Let $L$ be the set of all graphs on $o$ vertices. For $H\in L$, the vertex $u_j$ will be a representative of the orbit $V^j_H$. If $G$ is a graph of order $o-1$, then the following statements holds:
\begin{enumerate}[i.]
	\item $$(n-o+1)P_G=\sum_{H\in L}P_{G}(H)P_{H}$$ \label{rovnica1}
	
	\item For any $v\in V^i_G$ with $deg_G(v)\leq k$  
		$$(k-deg_G(v))|V^i_G|P_G=\sum_{H\in L}\sum_{j}f(V^i_G,H,u_j)|V^j_H|P_{H},$$ \label{rovnica2}
		
	\item For any edge $(v_1,v_2)\in E^G_i$ such that $\deg_G(v_1,v_2)\leq\lambda$
	
		$$(\lambda-deg_G(v_1,v_2))|E^i_G|P_G=\sum_{H\in L}\sum_{j}f(E^G_i,H,u_j)|V^j_H|P_{H},$$ \label{rovnica3}
				
	\item For any non-edge $(v_1,v_2)\in \overline{E}^i_G$ such that $\deg_G(v_1,v_2)\leq\mu$ 
	
		$$(\mu-deg_G(v_1,v_2))|\overline{E}^i_G|P_G=\sum_{H\in L}\sum_{j}f(\overline{E}^i_G,H,u_j)|V^j_H|P_{H}.$$ \label{rovnica4}
\end{enumerate}
\end{prop}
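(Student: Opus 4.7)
The plan is to reduce all four identities to the same double-counting principle, using Lemma \ref{lema-sumy} as the local ingredient and summing over all induced copies of $G$ in $\Gamma$.

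For part (i), I would simply count pairs $(\Delta, u)$ with $\Delta \subseteq \Gamma$, $\Delta \simeq G$ and $u \in V_\Gamma \setminus V_\Delta$ in two ways. Fixing $\Delta$ first gives $P_G \cdot (n-o+1)$, since a copy of $G$ has $o-1$ vertices. Fixing instead $\Omega := \Delta \cup \{u\}$ first, each induced subgraph $\Omega \subseteq \Gamma$ of order $o$ contributes the number of vertices $u \in V_\Omega$ with $\Omega - u \simeq G$; this count depends only on the isomorphism type $H$ of $\Omega$ and equals $P_G(H)$. Summing over $H \in L$ produces the right-hand side of (i).

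For parts (ii)--(iv) the strategy is to apply Lemma \ref{lema-sumy} to every $\Delta \subseteq \Gamma$ with $\Delta \simeq G$, choosing $S_\Delta$ to be an orbit of the appropriate type ($V^i_\Delta$, $E^i_\Delta$ or $\overline{E}^i_\Delta$) and $\alpha$ to be $k$, $\lambda$ or $\mu$ respectively, and then to sum the resulting $P_G$ identities. Since $|S_\Delta|$ and $deg_\Delta(S)$ are isomorphism invariants of $G$, the left-hand sides collapse to the required expressions $(\alpha - deg_G(S))|S_G| P_G$. On the right I would reorganise the double sum $\sum_{\Delta \simeq G} \sum_{u \in V_\Gamma - V_\Delta} f(S_\Delta, \Delta \cup u, u)$ by first grouping terms by the isomorphism type $H$ of $\Omega := \Delta \cup u$, and then by the orbit $V^j_H$ of the distinguished vertex $u$ within $\Omega$. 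Each pair $(\Delta, u)$ is uniquely recovered from $\Omega$ together with $u$ (with $\Delta = \Omega - u$), so each $\Omega \simeq H$ contributes $\sum_j |V^j_H|\, f(S_G, H, u_j)$, and summing over the $P_H$ copies of $H$ in $\Gamma$ yields the stated right-hand side.

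The main obstacle I anticipate is the bookkeeping in that last step: one must check that $f(S_\Delta, \Omega, u)$ depends only on the isomorphism class $H$ of $\Omega$ and on the orbit $V^j_H$ containing $u$, so that the factor $|V^j_H|$ appears with the correct multiplicity. This invariance follows from the orbit lemma preceding \ref{lema-sumy}, combined with the convention that the permutation $\psi$ is the identity: any isomorphism between two copies of $H$ sends $S_G$-orbits to $S_G$-orbits and preserves adjacency to the distinguished vertex, so the count $|\{S \in S_G : u \sim v \text{ for all } v \in S\}|$ is constant along each $V^j_H$. The hypotheses $deg_G(v) \leq k$, $deg_G(v_1,v_2) \leq \lambda$ and $deg_G(v_1,v_2) \leq \mu$ are automatic whenever $G$ embeds in an SRG with the given parameters, so they merely guarantee that the left-hand sides are non-negative counts.
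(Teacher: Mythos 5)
Your proposal is correct and follows essentially the same route as the paper: part (i) is the same double count of pairs $(\Delta,u)$ (the paper phrases it via the characteristic function $\mathbf{1}_\Omega(\Delta)$ and the identity $\sum_\Omega P_G(\Omega)=\sum_{H\in L}P_G(H)P_H$), and parts (ii)--(iv) are obtained exactly as you describe, by summing Lemma \ref{lema-sumy} over all copies of $G$ and regrouping by the isomorphism type of $\Omega=\Delta\cup u$ and the orbit of $u$. Your explicit justification of why $f$ is constant along each orbit $V^j_H$ fills in the step the paper compresses into ``this can be modified to obtain,'' so nothing is missing.
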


\begin{proof}
\ref{rovnica1}. The equation comes from an easy observation. Let us fix some induced subgraph $\Delta\in\Gamma$ isomorphic to $G$. Then there are exactly $n-(o-1)$ possibilities to extend $\Delta$ by adding one new vertex of $\Gamma$ into $V_{\Delta}$. Obviously, we obtain $n-o+1$ new subgraphs of $\Gamma$. This can be written as follows:

$$(n-o+1)=
\sum_{v\in{V_\Gamma-V_{\Delta}}}1=
\sum_{\substack{\Omega\subseteq \Gamma \\ |V_{\Omega}|=o}}\mathbf{1}_\Omega(\Delta),$$

where $\mathbf{1}_\Omega$ is a characteristic function of $\Omega$ defined as

$$
					\begin{array}{ll*{3}{l}}
					\mathbf{1}_\Omega(\Delta) &=& \left\{\begin{array}{ll}
					1 & \textrm{ if } \Delta\subseteq \Omega,\\
					0 & \textrm{ otherwise. }
					\end{array}\right.
					\end{array}
$$

At last, we sum the obtained equation over all subgraphs of $\Gamma$ isomorphic to $G$. Clearly, the left hand side is for any subgraph $\Delta\simeq  G$ the same. So it is just multiplied by the total number of subgraphs of $\Gamma$ isomorphic to $G$:

\begin{center}
\begin{align*}
(n-o+1)P_G	&=\sum_{\substack{\Delta\subseteq \Gamma \\ \Delta\simeq G}}\sum_{\substack{\Omega\subseteq \Gamma \\ |V_{\Omega}|=o}}\mathbf{1}_\Omega(\Delta)
			 =	 \sum_{\substack{\Omega\subseteq \Gamma \\ |V_{\Omega}|=o}}P_{G}(\Omega)\\
\end{align*}
\end{center}

After identifying all $\Omega\subseteq \Gamma$ that are mutually isomorphic we obtain the final equation:

\begin{center}
\begin{align*}		
(n-o+1)P_G	&=	 \sum_{H\in L}P_{G}(H)P_H.\\
\end{align*}
\end{center}

\ref{rovnica2}. Let us fix once more an induced subgraph $\Delta$ isomorphic to $G$ in $\Gamma$ and let a vertex $v$ be a representative of the orbit $V^i_{G}$ with $deg_{\Delta}(v)<k$ as . Since the degree of each vertex in $\Gamma$ equals $k$, $V^i_{G}$ satisfies the requirements of the lemma \ref{lema-sumy}, which gives:

$$
(k-deg_{\Delta}(v))|V^i_{G}|=\sum_{u\in V_{\Gamma}-V_\Delta} f(V^i_{G},\Delta\cup u,u).
$$

If we sum up the above equation over all subgraphs isomorphic to $G$, the left hand side will be just multiplied by value $P_G$:

$$
(k-deg_{\Delta}(v))|V^i_{G}|P_G	= \sum_{\substack{\Delta\subseteq \Gamma \\ \Delta\simeq G}}\sum_{u\in V_{\Gamma}-V_\Delta} f(V^i_{G},\Delta\cup u,u).
$$

This can by modified to obtain:

\begin{center}
\begin{align*}
(k-deg_{G}(v))|V^i_{G}|P_G	&=\sum_{\substack{\Omega\subseteq \Gamma \\ |V_\Omega|=o}}\sum_{j}f(V^i_{G},\Omega,u_j)|V^j_H|\\	
&\\
							&=\sum_{H\in L}\sum_{j}  f(V^i_{G},H,u_j)|V^j_H|P_H		\\
\end{align*}
\end{center}

\ref{rovnica3}. and \ref{rovnica4}. can be proven in a way similar to the part \ref{rovnica2}.

\end{proof}
\begin{figure}
\begin{center}
\includegraphics[scale=0.2]{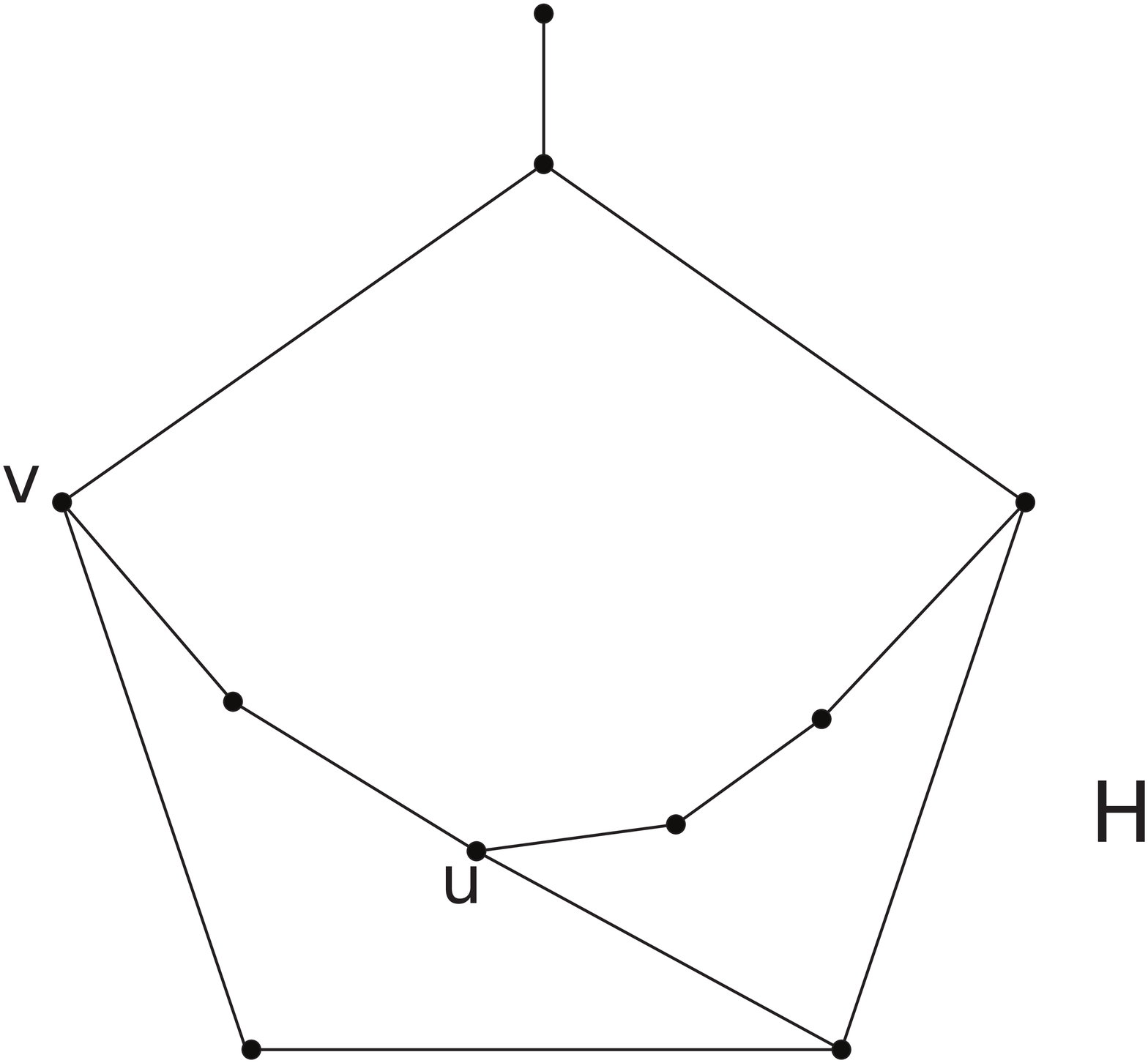}
\caption{}\label{fig_rovnice}
\end{center}
\end{figure}

\begin{rem}
In part \ref{rovnica1} of Proposition \ref{prop_rovnice} it is sufficient to use the value $P_{G}(H)$ in the right-hand side of the equation. This is different in the remaining parts of the Proposition. Here we need to distinguish between elements, which are not in the same orbit of $V_H$, $E_H$ or $\overline{E}_H$. For better understanding we illustrate this in Figure \ref{fig_rovnice}.\par
The vertices $u$ and $v$ in \ref{fig_rovnice} are obviously members of different orbits of $V_H$. In the figure \ref{fig2_rovnice} there are two graphs obtained by omitting $u$ or $v$. They are isomorphic to the same graph $G$. Now let $V^i_G$ be the set of vertices of degree two that are adjacent to a list in the graph $G$. There is just one such vertex.  $f(V^i_G,H,v)=1$ and $f(V^i_G,H,u)=0$.
\end{rem}

\begin{figure}[h!]
\begin{center}
\includegraphics[scale=0.2]{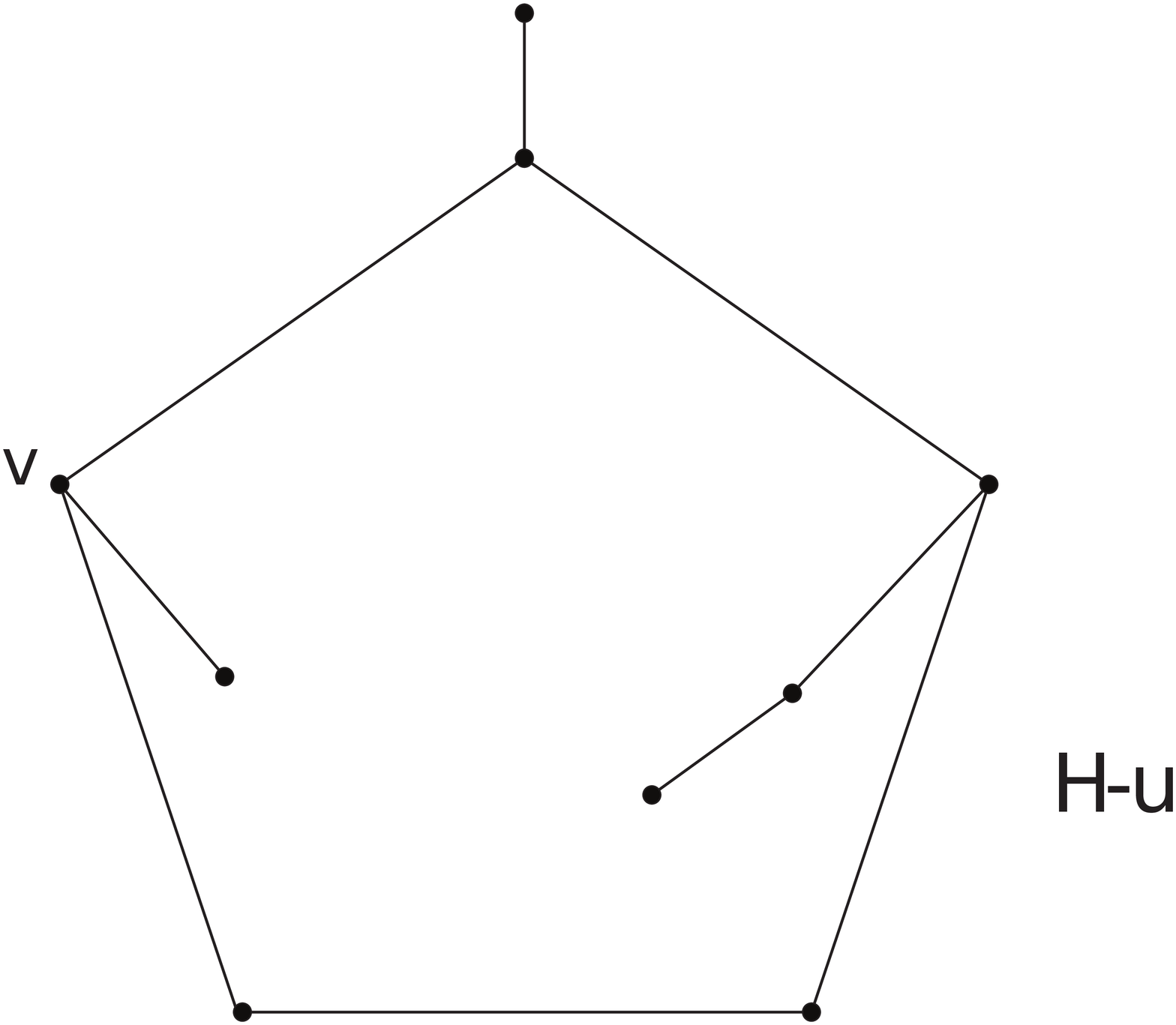}
\includegraphics[scale=0.2]{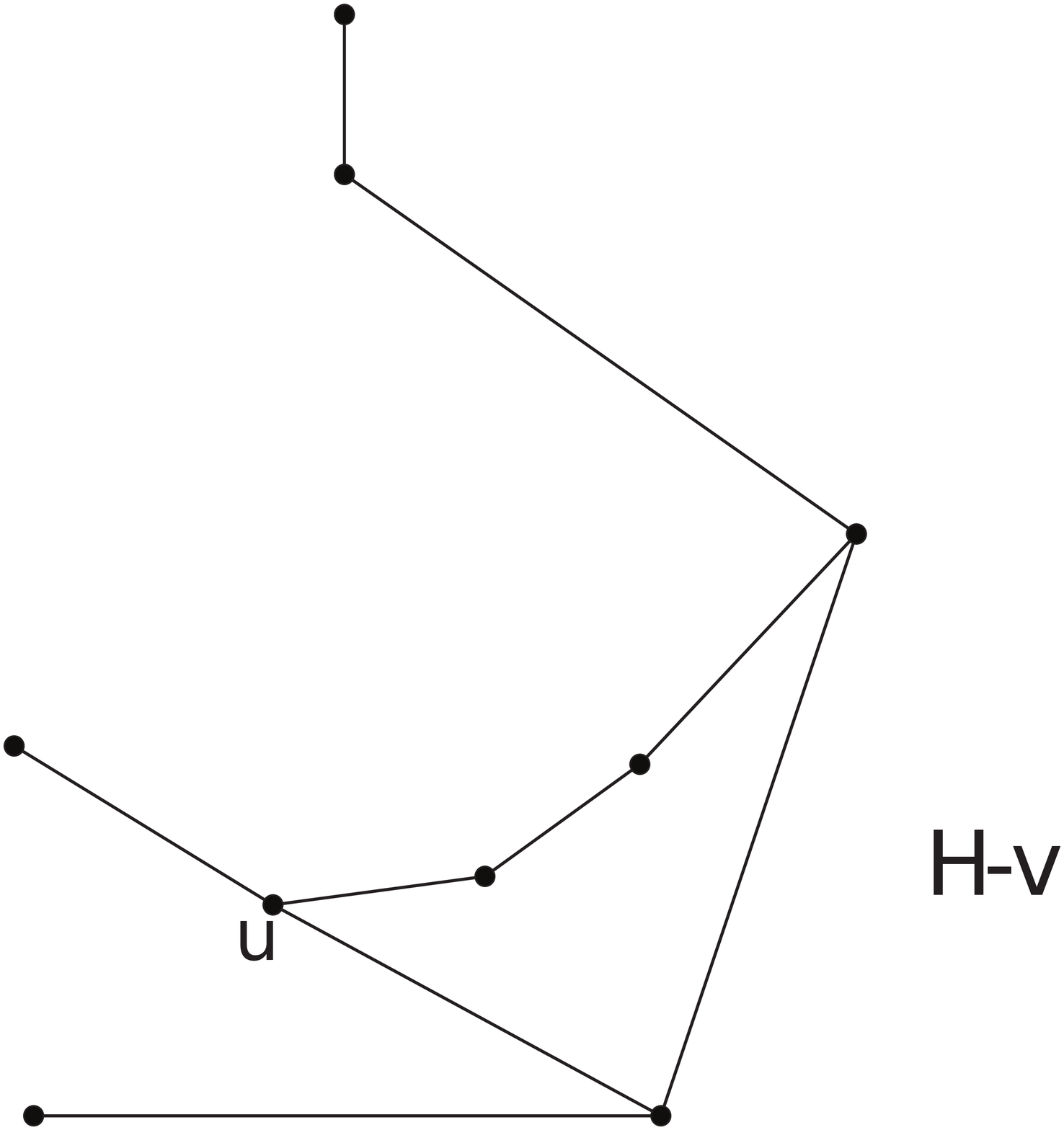}
\caption{Two isomorphic subgraphs of $H$}\label{fig2_rovnice}
\end{center}
\end{figure}

\begin{prop}
Let $\Gamma$ be a $srg(n,k,\lambda,\mu)$. Then, the number of all induced subgraph of order $o$ in $\Gamma$ equals to ${n\choose o} $ and the equation 

$${n\choose o}=\sum_{H\in L}P_H$$

is a linear combination of equations of the type \ref{rovnica1}. in \ref{prop_rovnice}.
\end{prop}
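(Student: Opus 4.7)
The argument splits into two parts.

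\textbf{The count.} Every $o$-element subset of $V(\Gamma)$ induces a unique subgraph, whose isomorphism type lies in $L$, and distinct subsets yield distinct induced subgraphs. Hence the $\binom{n}{o}$ subsets are partitioned by isomorphism type into classes of sizes $P_H$, giving $\binom{n}{o}=\sum_{H\in L}P_H$. The same observation applied at order $o-1$ gives the auxiliary identity $\sum_{G\in L'}P_G=\binom{n}{o-1}$, where $L'$ denotes the set of isomorphism classes of graphs of order $o-1$; I will use this below.

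\textbf{The linear combination.} I would take the equation from part \ref{rovnica1} of Proposition \ref{prop_rovnice} for each $G\in L'$ and form the weighted sum with every equation multiplied by $1/o$. This produces
$$\frac{n-o+1}{o}\sum_{G\in L'}P_G \;=\; \sum_{H\in L}\left(\frac{1}{o}\sum_{G\in L'}P_G(H)\right)P_H.$$
The left-hand side then reduces via the count from the first part together with the elementary identity $\tfrac{n-o+1}{o}\binom{n}{o-1}=\binom{n}{o}$, collapsing to $\binom{n}{o}$.

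For the right-hand side, I rely on the following double-counting observation: each graph $H$ on $o$ vertices has exactly $o$ induced subgraphs of order $o-1$, one obtained by removing each vertex. Partitioning these $o$ subgraphs by isomorphism type shows $\sum_{G\in L'}P_G(H)=o$ for every $H\in L$, so the parenthesized factor equals $1$ and the right-hand side becomes $\sum_{H\in L}P_H$. This exhibits $\binom{n}{o}=\sum_{H\in L}P_H$ as $1/o$ times the sum of the $|L'|$ equations of type \ref{rovnica1}, so the equation is a (rational) linear combination of them. I anticipate no real obstacle: the whole argument rests on two elementary double-counting facts plus one binomial identity.
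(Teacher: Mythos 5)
Your proof is correct and follows essentially the same route as the paper: both sum the type-(i) equations over all $(o-1)$-vertex graphs, use $\sum_{G}P_G=\binom{n}{o-1}$ and $\sum_{G}P_G(H)=o$, and finish with the identity $\frac{n-o+1}{o}\binom{n}{o-1}=\binom{n}{o}$. The only cosmetic difference is that you weight each equation by $1/o$ before summing, whereas the paper sums first and divides afterwards.
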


\begin{proof}
We just need to sum equation \ref{rovnica1}. through all subgraphs $G$ of order $o-1$ in $\Gamma$ and modify the result.
\begin{center}
\begin{align*}
\sum_{G}(n-o+1)P_G					&=\sum_{G}\sum_{H\in L}P_{G}(H)P_{H}\\
					&\\
(n-o+1){n\choose {o-1}}				&=\sum_{H\in L}oP_{H}\\
\end{align*}
\end{center}

\begin{center}
\begin{align*}
\frac{(n-o+1)}{o}{n\choose {o-1}}	&=\sum_{H\in L}P_{H}\\
					&\\
{n\choose {o}}						&=\sum_{H\in L}P_{H}\\
\end{align*}
\end{center}
\end{proof}

Note that there is another equation describing relations between induced subgraphs. It can be from characteristic polynomial of $\Gamma$. However, in each case when we included also this equation to our computations, we did not obtain any new result.

\subsection{Results}

This section summarize the results that follow from outputs of our algorithm. The solution of the system of equations described in the previous part gives us total numbers of all induced subgraphs of a given order in some $srg(n,k,\lambda,\mu)$. This solution can be for some $SRGs$ unique (if and only if the rank of the system is equal to the number of created graphs) but in general it is not. Hence we are obtaining a set of solutions with the integrability requirement. In this case there are dependences between numbers of occurrences of subgraphs. These we represent using new parameters ($P_1,P_2,\dots$) denoting occurrences of appropriate subgraphs in a given $SRG$. Clearly, there are always more options to distribute parameters $P_i$ and we describe our result by choosing one of them. The numbers graphs of small order in the following table are taken from \cite{orderly}.

\begin{table}[h!]    
\begin{center}
\begin{tabular}{c||c|c|c}
& \multicolumn{3}{ c }{Type of graphs} \\ \hline
order & general & $C_3$-free & $(C_3,C_4)$-free  \\ \hline
$1$	  &	$1$		&	$1$		& $1$	\\ \hline
$2$	  &	$2$		&	$2$		& $2$	\\ \hline
$3$	  &	$4$		&	$3$		& $3$	\\ \hline
$4$	  &	$11$	&	$7$		& $6$	\\ \hline
$5$	  &	$34$	&	$14$	& $11$	\\ \hline
$6$	  &	$156$	&	$38$	& $23$	\\ \hline
$7$	  &	$1044$	&	$107$	& $48$	\\ \hline
$8$	  &	$12346$	&	$410$	& $114$	\\ \hline
$9$	  &	$274668$&	$1897$	& $293$	\\ \hline
$10$  &$12005168$&	$12172$	& $869$	\\ \hline
$11$  &$1018997864$&$105071$& $2963$\\ 
\end{tabular}
\end{center}
\caption{Numbers of non-isomorphic graphs of small orders}
\label{poctygrafov}
\end{table}
\newpage

\subsubsection{Subgraphs of triangle free $SRGs$}

In this section, $\Gamma$ always denotes a $srg(n,k,0,\mu)$. It was already shown that the numbers of all subgraph in $\Gamma$ of order at most $4$ depend only on $n$, $k$ and $\mu$. We present here results for subgraphs of higher orders.

\begin{prop}
The system of equations provided by our algorithm for $srg(n,k,0,\mu)$ and $o\in\{5,6,7,8\}$ has the following parameters:\\
\begin{table}[h!]
\begin{center}

\begin{tabular}{r|| c| c| c| c| c}
			& number of		&	number of	&	rank of		&	number of		&	number of 		\\
	$o$		& equations 	&	 variables	&	the system	&	 free variables	& new parameters	\\
\hline
\hline
$5$			&	$30$		&	$14$		&	$14$		&	$0$				& $0$	\\
\hline
$6$			&	$86$		&	$38$		&	$37$		&	$1$		 		& $1$	\\
\hline
$7$			&	$301$		&	$107$		&	$106$		&	$1$				& $2$	\\
\hline
$8$			&	$1238$		&	$410$		&	$402$		&	$8$				& $10$	\\
\end{tabular}
\end{center}
\end{table} 
\end{prop}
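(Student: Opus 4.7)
The plan is to verify the tabulated quantities by explicitly running the algorithm of Section \ref{sec-alg} for each $o\in\{5,6,7,8\}$, tallying its outputs, and then performing a rank computation on the resulting integer matrix. The \emph{number of variables} column is immediate: a variable $P_H$ is introduced for each non-isomorphic triangle-free graph $H$ on $o$ vertices, so the entries $14,\ 38,\ 107,\ 410$ are read directly off Table \ref{poctygrafov}. I would generate the actual lists of graphs with an external enumeration tool (for example \texttt{nauty}'s \texttt{geng -t}) and store both the $o-1$ list (needed for indexing equations) and the $o$ list (needed for indexing columns).

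Next I would fill the \emph{number of equations} column. By Proposition \ref{prop_rovnice}, every triangle-free $G$ on $o-1$ vertices contributes exactly one equation of type \ref{rovnica1}, one equation of type \ref{rovnica2} per orbit of $\mathrm{Aut}(G)$ on $V_G$, one equation of type \ref{rovnica3} per orbit on $E_G$ (with $\lambda=0$ the constraint becomes a homogeneous identity, but it is still produced and still provides rank), and one equation of type \ref{rovnica4} per orbit on $\overline{E}_G$. Thus the total equation count is
\[
\sum_{G}\bigl(\,1+|V_G/\mathrm{Aut}(G)|+|E_G/\mathrm{Aut}(G)|+|\overline{E}_G/\mathrm{Aut}(G)|\bigr),
\]
summed over all triangle-free $G$ of order $o-1$. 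Computing $\mathrm{Aut}(G)$ and its orbits on vertices, edges, and non-edges (again via \texttt{nauty}) and summing should reproduce $30$, $86$, $301$, and $1238$.

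The third step is the assembly of the coefficient matrix $M$, whose rows are indexed by the equations just enumerated and whose columns are indexed by the $o$-vertex graphs $H$. Each entry is computed through the orbit-counting function $f(S_G,H,u)$ of Section \ref{sec-alg}, multiplied by the appropriate orbit size. Once $M$ is built, its rank over $\mathbb{Q}$ determines the number of independent equations, and subtracting from the variable count gives the number of free variables. The \emph{number of new parameters} column is then obtained by the recursion
\[
\text{params}(o)=\text{params}(o-1)+\text{free variables}(o),
\]
starting from $\text{params}(4)=0$ (by Proposition \ref{prop:pre4}); this reproduces $0,\,1,\,2,\,10$, since a parameter introduced at a lower order persists as an additional unknown at every higher order when the algorithm is cascaded.

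The main obstacle is computational rather than conceptual: the $o=8$ step requires the rank of a $1238\times 410$ integer matrix, which must be certified exactly (a floating-point rank is unreliable when the rank defect is as small as $8$). I would use fraction-free Gaussian elimination or a Smith normal form routine in a computer algebra system. A valuable cross-check at every level is that the global identity $\binom{n}{o}=\sum_{H\in L}P_H$ must appear in the row-span of $M$, as guaranteed by the proposition preceding this one; failure of that test would flag a bug in the implementation of $f$ or in the orbit computations. An additional sanity check is that for $o=5$ the rank should equal the full count $14$, recovering the classical fact that all $5$-vertex induced subgraph frequencies of a triangle-free SRG are determined by $(n,k,\mu)$.
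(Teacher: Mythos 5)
Your overall plan is the paper's own: the proposition is a computational report, and the only ``proof'' is to run the algorithm, tally equations and variables, and compute the exact rank of the integer matrix $M$. Your reading of the variable column from Table \ref{poctygrafov}, your insistence on exact rank computation, and your cumulative recursion $\mathrm{params}(o)=\mathrm{params}(o-1)+\mathrm{free}(o)$ (which correctly reproduces $0,1,2,10$ from the free-variable column and is confirmed by the subsequent theorems identifying $K_{3,3}$, $K_{3,4}$, and the eight order-$8$ graphs) are all consistent with what the paper does.

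There is, however, one concrete error: your equation-count formula $\sum_G\bigl(1+|V_G/\mathrm{Aut}(G)|+|E_G/\mathrm{Aut}(G)|+|\overline{E}_G/\mathrm{Aut}(G)|\bigr)$ will not reproduce the tabulated numbers, because in the triangle-free specialization the type-\ref{rovnica3} (edge) equations are not merely ``homogeneous identities that still provide rank'' --- they are identically $0=0$ and are evidently not counted. Since the column index set $L$ is restricted to triangle-free graphs on $o$ vertices (that is what makes the variable counts $14,38,107,410$), no $H\in L$ contains a common neighbour of an adjacent pair, so both sides of every edge equation vanish; such rows contribute nothing to the rank and, judging by the arithmetic, are omitted from the tally. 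Concretely, for $o=5$ the seven triangle-free graphs on $4$ vertices have vertex/edge/non-edge orbit counts summing to $1{+}1{+}1{+}1$, $1{+}2{+}2{+}2$, $1{+}2{+}1{+}1$, $1{+}1{+}1{+}1$, $1{+}3{+}1{+}3$, $1{+}2{+}1{+}2$, $1{+}1{+}0{+}1$, i.e.\ $37$ in total, whereas dropping the $7$ edge orbits gives exactly the tabulated $30$. So your tally should be $\sum_G\bigl(1+|V_G/\mathrm{Aut}(G)|+|\overline{E}_G/\mathrm{Aut}(G)|\bigr)$. This does not affect the rank, the free variables, or the parameter count, but as stated your procedure would fail to verify the ``number of equations'' column.
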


The more detailed analysis of the results from the previous proposition allows us to obtain new information about relation between subgraphs of $srg(n,k,0,\mu)$. These are summarized in following theorems.

\begin{veta}\label{tf5}
The value $P_G$ is, for any graph $G$ on at most $5$ vertices, determined uniquely by parameters $k$ and $\mu$.
\end{veta}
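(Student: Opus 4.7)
The plan is to invoke the recursive framework developed in Section 3 with $o=5$, use Proposition \ref{prop:pre4} as the base of the induction, and then verify that the resulting linear system has full column rank, so that its (unique) solution is forced to be a polynomial in the parameters $k$ and $\mu$ (recall that by Proposition \ref{prop-par}, $n$ is itself determined by $k$, $\mu$, $\lambda=0$, so ``depends on $k$ and $\mu$'' is the same as ``depends on $n,k,\mu$'').

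First I would enumerate the $|L|=14$ non-isomorphic triangle-free graphs on $5$ vertices (consistent with Table \ref{poctygrafov}) as unknowns $P_{H_1},\ldots,P_{H_{14}}$. Then I would run the algorithm of Section \ref{sec-alg} with the list $L_0$ of all $7$ triangle-free graphs on $4$ vertices, whose values $P_G$ are already expressed in terms of $n,k,\mu$ via Proposition \ref{prop:pre4}. For each $G\in L_0$, Proposition \ref{prop_rovnice} produces one equation of type \ref{rovnica1} (extension by an arbitrary new vertex), one equation of type \ref{rovnica2} for each vertex orbit of $G$, one equation of type \ref{rovnica3} for each edge orbit of $G$ (with $\lambda=0$ substituted into the left-hand coefficient), and one equation of type \ref{rovnica4} for each non-edge orbit of $G$. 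The total count is exactly the $30$ equations listed in the preceding proposition.

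The key observation is then that, although the system has $30$ equations in only $14$ unknowns, its rank equals $14$. Granted this, the coefficient matrix $M$ is integral and parameter-free, while the right-hand side $b$ is a vector of polynomials in $n,k,\mu$ (the coefficients come from Proposition \ref{prop:pre4} and from the multipliers $n-4$, $k-\deg_G(v)$, $-\deg_G(v_1,v_2)$, $\mu-\deg_G(v_1,v_2)$). Since $Mx=b$ is consistent (a bona fide SRG solution exists) and $M$ has rank $14$, the unique solution $x=M^{+}b$ expresses every $P_{H_i}$ as a rational, hence polynomial, function of $k$ and $\mu$, proving the theorem.

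The main obstacle is the rank verification: in principle one would need to exhibit $14$ linearly independent rows among the $30$ produced equations. I would do this not by hand but by invoking the algorithm's output summarized in the tabulated proposition, which records rank $14$ and zero free variables for $o=5$; alternatively one could pick a convenient set of $14$ equations (for example, those arising from $(n-4)P_G$ for each of the $7$ graphs in $L_0$ together with seven carefully chosen vertex/edge/non-edge equations) and compute a non-zero $14\times 14$ minor. The remaining steps (substituting $\lambda=0$, assembling coefficients via the function $f$, and solving the resulting triangular-ish system) are routine once this rank fact is in hand.
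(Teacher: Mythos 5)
Your proposal takes essentially the same route as the paper: the theorem is obtained exactly as a corollary of the computational fact that the algorithm's system for $o=5$ in the triangle-free case has $30$ equations, $14$ variables and rank $14$, with right-hand sides that are functions of $n,k,\mu$ (and hence of $k,\mu$ alone, by Proposition \ref{prop-par} with $\lambda=0$), the base case being Proposition \ref{prop:pre4}. The only quibble is that the type-\ref{rovnica3} (edge/$\lambda$) equations you list are vacuously $0=0$ here, since a common neighbour of an edge would create a triangle in a graph of $L$ --- which is why the tabulated count is $30$ rather than the $37$ your enumeration would produce; this has no effect on the rank or the conclusion.
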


\begin{veta}
There are $12$ graphs on $6$ vertices for which their numbers of induced copies in $\Gamma$ are determined uniquely only by parameters $k$ and $\mu$. The values $P_G$ for remaining $26$ cases depend also on the number of induced $K_{3,3}$ subgraphs in $\Gamma$, which is represented by the parameter $P_1$.
\end{veta}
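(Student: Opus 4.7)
The plan is to specialise the algorithm of Section~\ref{sec-alg} to the case $o=6$. By Theorem~\ref{tf5}, every $P_G$ with $|V_G|\leq 5$ is an explicit polynomial in $k$ and $\mu$, so the input data for the algorithm are completely determined by these two parameters. Applying the four families of equations in Proposition~\ref{prop_rovnice} to each of the $14$ triangle-free graphs of order $5$ produces a linear system $Mx=b$, where the unknown vector $x=(P_{H_1},\dots,P_{H_{38}})$ enumerates the $38$ triangle-free graphs on $6$ vertices. By the preceding proposition, $M$ is an $86\times 38$ integer matrix of rank $37$, so the affine solution set is a one-dimensional line.

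Writing this line as $x=x_0+t\,w$, where $x_0$ is a particular solution whose entries are polynomial in $k$ and $\mu$ and $w\in\ker M\setminus\{0\}$ is a fixed integer generator, the $i$th coordinate of $x$ is independent of $t$ if and only if $w_i=0$. The theorem therefore reduces to two finite facts about $w$: (i) exactly $12$ of its $38$ coordinates vanish and the other $26$ are nonzero; and (ii) the coordinate corresponding to $K_{3,3}$ is nonzero. Given (ii), we can re-parametrise by $t=(P_{K_{3,3}}-(x_0)_{K_{3,3}})/w_{K_{3,3}}$, so that every $P_H$ becomes an affine function of $k$, $\mu$ and $P_{K_{3,3}}$; declaring $P_1:=P_{K_{3,3}}$ then produces the exact statement of the theorem.

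Verifying (i) and (ii) is mechanical: one computes a basis of $\ker M$ and inspects the support of its (unique up to scale) integer generator. The main obstacle is not conceptual but organisational, namely building $M$ correctly. For every pair $(G,H)$ consisting of a triangle-free $5$-vertex graph $G$ and a triangle-free $6$-vertex graph $H$ with $G$ induced in $H$, one has to evaluate the orbit-wise counts $f(V^i_G,H,u_j)$, $f(E^i_G,H,u_j)$ and $f(\overline{E}^i_G,H,u_j)$; this requires pre-computing the orbits of $\mathrm{Aut}(G)$ on $V_G$, $E_G$, $\overline{E}_G$ and of $\mathrm{Aut}(H)$ on $V_H$, together with the list of induced copies of $G$ in each $H$. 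Because the resulting entries of $M$ are small integers independent of the symbolic parameters, the computation is exact and the right-hand side $b$ is a polynomial vector in $k$ and $\mu$; existence of a particular solution $x_0$ is automatic from the consistency of the equations on any genuine $srg(n,k,0,\mu)$. The counts ``$12$ vanishing and $26$ nonzero, with $w_{K_{3,3}}\neq 0$'' are then read off from the output of the algorithm, and the theorem follows.
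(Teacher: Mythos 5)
Your proposal matches the paper's approach: the theorem is obtained exactly by running the algorithm of Section \ref{sec-alg} for $o=6$ with the order-$5$ counts supplied by Theorem \ref{tf5}, observing that the resulting $86\times 38$ system has rank $37$, and reading off from the one-dimensional solution space (parametrised by $P_{K_{3,3}}=P_1$) which of the $38$ coordinates are constant; the explicit formulas in the appendix confirm the $12$/$26$ split. Your write-up is a correct and somewhat more explicit articulation of the same computation.
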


\begin{veta}
There are $15$ graphs on $7$ vertices for which their numbers of occurrences in $\Gamma$ are determined uniquely by values $k$ and $\mu$. Among the remaining $92$ cases there are $91$ graphs whose occurrences depend also on $P_{K_{3,3}}=:P_1$ and occurrences of $76$ of them (including $K_{3,4}$) depend on $P_{K_{3,4}}=:P_2$.
\end{veta}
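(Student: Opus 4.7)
The plan is to apply the general algorithm of Section~\ref{sec-alg} with input order $o=7$ to $\Gamma = srg(n,k,0,\mu)$. Its input list $L_0$ consists of the $38$ triangle-free graphs on $6$ vertices together with the closed-form expressions for $P_G$ obtained in the preceding theorem (each a polynomial in $n,k,\mu$ plus the single free parameter $P_1 := P_{K_{3,3}}$); its output list $L$ is the set of $107$ triangle-free graphs on $7$ vertices (cf.\ Table~\ref{poctygrafov}).

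First I would execute the algorithm literally. For every $G \in L_0$ I form the four groups of equations prescribed by Proposition~\ref{prop_rovnice}, based on the orbits of $V_G$, $E_G$ and $\overline{E}_G$ under $Aut(G)$ and the coefficient function $f$. Aggregated over all $G \in L_0$ this produces the $301$ linear equations already tabulated in the previous proposition in the $107$ unknowns $\{P_H : H \in L\}$, whose coefficient matrix $M$ is an integer $301 \times 107$ matrix of rank $106$.

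Next I would bring the system to reduced row echelon form over $\mathbb{Q}(n,k,\mu,P_1)$. Since the rank is $106$, a single unknown remains free; inspection of the reduced form shows that $P_{K_{3,4}}$ is a valid choice (equivalently, the $K_{3,4}$-column lies in the span of the other $106$), and we set $P_2 := P_{K_{3,4}}$. Every $P_H$ is then expressed uniquely as
\[
P_H = \alpha_H(n,k,\mu) + \beta_H(n,k,\mu)\,P_1 + \gamma_H(n,k,\mu)\,P_2,
\]
and the three assertions of the theorem become counting statements about how many $H \in L$ satisfy $\beta_H = \gamma_H = 0$, how many satisfy $\beta_H \neq 0$, and how many satisfy $\gamma_H \neq 0$. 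Reading these counts off the solved system yields the stated values $15$, $91$ and $76$; the inclusion of $K_{3,4}$ in the last group is immediate from $P_{K_{3,4}} = P_2$.

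The principal obstacle is entirely computational: the enumeration of all $107$ triangle-free graphs of order $7$ together with their vertex-, edge- and non-edge orbits under the automorphism group, the evaluation of $f$ for every relevant pair $(G,H)$, and the symbolic Gaussian elimination of a $301 \times 107$ system with entries in $\mathbb{Q}[n,k,\mu,P_1]$. A smaller but essential check is that $P_{K_{3,4}}$ really is a free parameter, i.e.\ that the unique (up to scalar) linear dependence among the $107$ columns of $M$ has nonzero coefficient on the $K_{3,4}$-column; this is confirmed automatically by the elimination.
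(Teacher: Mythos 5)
Your proposal is correct and is essentially the paper's own (implicit) argument: the theorem is stated as a direct consequence of the algorithm's output for $o=7$, namely the $301\times 107$ integer system of rank $106$ recorded in the preceding proposition, with the order-$6$ values (carrying the single parameter $P_1=P_{K_{3,3}}$) fed in as input and $P_2=P_{K_{3,4}}$ chosen as the one remaining free variable. Your added remark that one must verify the $K_{3,4}$-column genuinely participates in the unique column dependence (so that $P_{K_{3,4}}$ is an admissible choice of free parameter) is the only point the paper leaves tacit, matching its comment that "there are always more options to distribute parameters $P_i$."
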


\begin{veta}
There exist eight graphs on $8$ vertices, each containing induced $C_4$, with occurrences in $\Gamma$ denoted by $P_3,P_4,\dots P_{10}$, such that the number of occurrences of any graph of order $8$ depends on parameters $k$, $\mu$ and on $P_1,P_2,\dots,P_{10}$.
\end{veta}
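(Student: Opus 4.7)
The plan is to apply the algorithm of Section \ref{sec-alg} with $o=8$ to $\Gamma = srg(n,k,0,\mu)$, producing an integer system $Mx = b$ with $M$ of size $1238 \times 410$ and $x$ indexed by the $410$ triangle-free graphs on $8$ vertices. By the preceding theorems on orders at most $7$, the entries of $b$ are polynomial expressions in $k$, $\mu$, $P_1$ and $P_2$, since every coefficient produced by the algorithm is built from $(n,k,0,\mu)$ together with counts $P_G$ for $G$ of order at most $7$, and all such counts are known functions of $k,\mu,P_1,P_2$. The previous proposition records that $\mathrm{rank}(M) = 402$, so the solution space is an $8$-dimensional affine space; the task is to show that the eight free parameters can all be chosen to be counts of $C_4$-containing graphs.

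I would approach this through a cobasis argument in the column matroid of $M$. Let $\mathcal{A}$ denote the set of columns indexed by the $296$ triangle-free $8$-vertex graphs that contain an induced $C_4$, and $\mathcal{B}$ the $114$ columns indexed by $(C_3,C_4)$-free $8$-vertex graphs (the counts come from Table \ref{poctygrafov}). An $8$-element cobasis of $M$ lies entirely in $\mathcal{A}$ if and only if $\mathcal{B}$ is linearly independent in $M$ and can be extended to a $402$-element column basis using only columns from $\mathcal{A}$. Since $|\mathcal{B}| = 114 \le 402 = \mathrm{rank}(M)$, extendability is automatic once independence of $\mathcal{B}$ is established, so the problem reduces to verifying the linear independence of those $114$ columns.

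This verification would be carried out by exact Gaussian elimination on the restricted matrix $M|_{\mathcal{B}}$ in rational arithmetic inside a computer algebra system, confirming that its rank equals $114$. Once independence is confirmed, I would greedily extend $\mathcal{B}$ to $402$ linearly independent columns by pulling $288$ further columns out of $\mathcal{A}$; the remaining $8$ columns of $\mathcal{A}$ then play the role of $P_3,\dots,P_{10}$, and back-substitution expresses every $P_H$ with $|V_H|=8$ as an affine function of $k,\mu,P_1,\dots,P_{10}$, proving the theorem.

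The principal obstacle is computational rather than conceptual: the $1238 \times 410$ system is large enough that the rank computations must be automated and checked with exact arithmetic, and the specific choice of the eight representative graphs is not canonical. One would in addition prefer an elimination order that produces a basis convenient for the downstream applications in Sections 4 and 5, which means the plan also includes some post-processing to pick structurally simple $C_4$-containing graphs (for instance those admitting a clean combinatorial interpretation) among the many possible cobases guaranteed by the matroid argument.
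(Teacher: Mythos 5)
Your argument is sound and reaches the same computational endpoint as the thesis, which states this theorem as a direct consequence of the algorithm's output (the rank-$402$ system on $410$ variables) without giving a separate proof. What you add is a clean criterion for \emph{why} the eight free parameters can be taken among $C_4$-containing graphs: a set of $8$ coordinates is a legitimate choice of free variables for $Mx=b$ exactly when the complementary $402$ columns form a column basis, so a cobasis avoiding the $(C_3,C_4)$-free graphs exists if and only if the $114$ columns indexed by those graphs are linearly independent, and the extension of that independent set to a basis then automatically uses only $C_4$-containing columns because nothing else is left. This isolates a single, smaller exact-rank computation (a $1238\times 114$ submatrix) in place of a search over cobases, and it also explains the structural point the thesis exploits later: in a putative Moore graph of valency $57$ every $P_i$ with $i\ge 3$ vanishes (as do $P_1$ and $P_2$, since $K_{3,3}$ and $K_{3,4}$ contain induced $C_4$), so all order-$8$ counts collapse to functions of $k$ alone. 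Your identification of the right-hand side $b$ as a function of $k$, $\mu$, $P_1$, $P_2$ is also correct, since every entry of $b$ is a parameter-coefficient multiple of some $P_G$ with $|V_G|=7$ and the order-$7$ theorem expresses all such counts through $k$, $\mu$, $P_{K_{3,3}}$, $P_{K_{3,4}}$. The only caveat is the one you already acknowledge: the independence of the $114$ columns is delegated to exact arithmetic rather than proved, so what you have is a proof scheme conditional on that verification --- which is no worse than the thesis itself, whose justification is the same computation presented without the matroid bookkeeping and without an explicit certificate that the eight representatives can all be chosen $C_4$-containing.
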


\subsubsection{Induced subgraphs of $srg(3250,57,0,1)$}
The missing Moore graph is specific because of its small value of $\mu$. This implies that the number of occurrences of many graphs as induced subgraphs is in this particular case equal to zero. Clearly, a $srg(3250,57,0,1)$ cannot contain an induced $4$-cycle. Hence, also its subgraphs have to be triangle-free with no $4$-cycle, which corresponds to the last column of Table \ref{poctygrafov}.\\
It follows from theorems from the previous section that the numbers of occurrences of all graphs of orders at most $8$ in $srg(3250,57,0,1)$ are determined uniquely. Calculations for this special case confirm this result independently.

\begin{prop}
The systems of equations provided by our algorithm for\\ 
a $srg(3250,57,0,1)$ and $o\in\{9,10\}$:
\begin{enumerate}
	\item If $o=9$ then the system contains $1234$ equations on $293$ variables. The rank of this system is $293$, therefore it has a unique solution.
	\item If $o=10$ then the system contains $4221$ equations on $869$ variables. The rank of this system is $868$, therefore it has $1$ free variable.
\end{enumerate}
\end{prop}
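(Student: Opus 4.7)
The plan is to execute the algorithm from Section \ref{sec-alg} on the input $(n,k,\lambda,\mu)=(3250,57,0,1)$ for $o=9$ and $o=10$ and to analyse the resulting linear systems. The first step is to prune the graph lists using the Moore constraints: $\lambda=0$ forbids induced triangles, while $\mu=1$ forbids induced $C_4$ (two non-adjacent vertices in a $C_4$ would share two common neighbours). Hence every induced subgraph of $\Gamma$ must be $(C_3,C_4)$-free, and all variables $P_H$ with $H$ containing an induced $C_3$ or $C_4$ may be eliminated \emph{a priori}. By the last column of Table \ref{poctygrafov}, this leaves $114$ graphs of order $8$, $293$ of order $9$, and $869$ of order $10$, matching the variable counts asserted in the statement.

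Next I would generate the equations using Proposition \ref{prop_rovnice}. For each graph $G$ of order $o-1$ in the pruned list, the algorithm produces one equation of type \ref{rovnica1} (from $n$), one equation of type \ref{rovnica2} per orbit of $V_G$ under $\mathrm{Aut}(G)$, and one equation of type \ref{rovnica4} per orbit of $\overline{E}_G$. The equations of type \ref{rovnica3} collapse to $0=0$, since $\lambda=0$ and every edge $e$ of a triangle-free $G$ already satisfies $\deg_G(e)=0$, so they contribute nothing to either the equation count or the matrix rank. Summing the relevant orbit counts over the $114$ and $293$ graph representatives respectively should yield exactly the totals $1234$ and $4221$ claimed.

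Finally, the ranks of the two resulting integer coefficient matrices must be computed by exact Gaussian elimination, for example in GAP or SageMath. The entries of these matrices are non-negative integers determined solely by orbit sizes and by the combinatorial quantities $f(S_G,H,u)$ from Section \ref{sec-alg}, so rank over $\mathbb{Q}$ is well defined; the assertion reduces to verifying that this rank equals $293$ for $o=9$ (unique solution) and $868$ for $o=10$ (a one-dimensional affine solution space).

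The main obstacle is the sheer size of the $o=10$ instance: a $4221\times 869$ matrix built on top of a canonical list of $869$ $(C_3,C_4)$-free graphs on $10$ vertices. Both the isomorph-free enumeration of these graphs and the systematic evaluation of the coefficients $f(S_G,H,u)$ — which require, for each embedding $G\simeq H-u$, counting neighbours of $u$ in a specified orbit of $G$ — must be implemented with care to avoid double counting and to keep the arithmetic exact. Once the computation is correctly carried out, the drop in rank by exactly one for $o=10$ is a concrete verifiable phenomenon, and it is consistent with the fact that many of the free parameters $P_i$ appearing in the general $(n,k,0,\mu)$ analysis (notably $P_{K_{3,3}}$ and $P_{K_{3,4}}$) vanish automatically in a Moore graph of valency $57$, because the corresponding graphs contain an induced $C_4$.
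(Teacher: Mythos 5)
Your proposal is correct and matches the paper's (implicit) approach exactly: the proposition is simply the recorded output of the algorithm of Section 3.2 run on $(3250,57,0,1)$, and you correctly identify the pruning to $(C_3,C_4)$-free graphs (giving the $293$ and $869$ variables of Table \ref{poctygrafov}), the generation of the equations via Proposition \ref{prop_rovnice} with the $\lambda$-equations degenerating, and the exact rank computation that yields $293$ and $868$. Nothing further is needed beyond carrying out that computation.
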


\begin{veta}
The number of induced copies of any graph on at most $9$ vertices in $\Gamma$ is constant.
\end{veta}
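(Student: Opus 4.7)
The plan is to proceed by induction on the order $o$ and invoke the output of the algorithm from Section~\ref{sec-alg} applied to the parameters $(3250,57,0,1)$, combined with the rank statement in the immediately preceding proposition. For $o \leq 8$ the claim has already been established (in fact, the theorems of the previous subsection show that the only possible ``free'' parameters at these orders, namely $P_{K_{3,3}}$, $P_{K_{3,4}}$ and the eight $8$-vertex parameters $P_3,\ldots,P_{10}$, all contain an induced $C_4$ or an induced $K_3$, and hence must vanish when $\mu = 1$). The inductive step to treat is therefore $o = 9$.

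First, I would observe that since $\lambda = 0$ and $\mu = 1$, $\Gamma$ cannot contain an induced $K_3$ or an induced $C_4$; consequently every graph $G$ of order $9$ with $P_G(\Gamma) \neq 0$ must be $(C_3,C_4)$-free, so the unknowns in the system $Mx = b$ produced by the algorithm reduce to the $293$ isomorphism types listed in the last column of Table~\ref{poctygrafov}. This matches the count of variables reported in the preceding proposition.

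Next, I would examine the right-hand side $b$. By inspection of Proposition~\ref{prop_rovnice}, each equation produced by the algorithm is of the form $(\text{coefficient in }n,k,\lambda,\mu)\cdot P_G = \sum_H c_{G,H} P_H$, with $G$ of order $o-1 = 8$. Thus $b$ is a $\mathbb{Z}$-linear combination of the values $P_G(\Gamma)$ for $G$ on $8$ vertices, all of which are constants by the inductive hypothesis. Together with the rank statement (rank of $M$ equals $293$, the number of variables), the system $Mx = b$ has a unique solution, and that solution is a function only of $n$, $k$, $\mu$. This gives a constant value for each $P_G$ with $|V_G| = 9$.

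The principal obstacle is not conceptual but computational: verifying that the $1234 \times 293$ integer matrix $M$ has full column rank $293$. This has to be done by running the algorithm of Section~\ref{sec-alg}, enumerating the $(C_3,C_4)$-free graphs of orders $8$ and $9$, generating the four families of equations of Proposition~\ref{prop_rovnice} for each graph of order $8$, and computing the rank of the resulting matrix (e.g.\ over $\mathbb{Q}$, or modulo several primes to confirm the rank is not lost by reduction). Once this rank equality is checked, the argument above closes the induction and yields the theorem.
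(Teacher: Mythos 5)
Your proposal is correct and follows essentially the same route as the paper: the theorem is a direct consequence of the preceding proposition (the $1234\times 293$ system for $o=9$ has rank $293$, hence a unique solution), with the right-hand side determined by the $8$-vertex counts, which are themselves constant because the only free parameters at orders up to $8$ ($P_{K_{3,3}}$, $P_{K_{3,4}}$, and the eight $8$-vertex graphs each containing an induced $C_4$) vanish when $\lambda=0$ and $\mu=1$. The computational verification of the rank is exactly the step the paper also relies on.
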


\begin{veta}
There are $595$ graphs of order $10$ whose number of occurrences in $\Gamma$ is constant. Occurrences of remaining $274$ cases depend on the number of induced Petersen graphs in $\Gamma$.
\end{veta}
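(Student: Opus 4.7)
The plan is to apply the algorithm of Section \ref{sec-alg} to the concrete parameter set $(3250,57,0,1)$ with $o=10$, and then perform a detailed analysis of the output. By the preceding proposition, this system consists of $4221$ equations in $869$ unknowns $P_H$, one for each $(C_3,C_4)$-free graph on $10$ vertices (we use $(C_3,C_4)$-free because $\mu=1$ forces every induced subgraph of $\Gamma$ to contain no quadrangle), and it has rank $868$. Consequently, the affine space of solutions is one-dimensional.

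First, I would row-reduce the coefficient matrix $M$ (which by construction has integer entries and is independent of the particular SRG, only the right-hand side $b$ depends on $n$, $k$, $\mu$). Because the solution space has dimension $1$, each variable admits a unique presentation
\begin{equation*}
P_H = \alpha_H + \beta_H\cdot t,
\end{equation*}
where $\alpha_H\in\mathbb{Z}$ is read off from a particular solution, $\beta_H\in\mathbb{Z}$ is read off from a generator of the kernel of $M$, and $t$ is a single free parameter. Those $H$ with $\beta_H=0$ form the set of graphs whose count in $\Gamma$ is completely determined by the parameters; the rest genuinely depend on $t$. A direct count from the row-reduced form should produce exactly $595$ indices with $\beta_H=0$ and $274$ with $\beta_H\neq 0$.

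Next, I would identify the Petersen graph $\mathcal{P}$ among the $869$ variables and check that $\beta_{\mathcal{P}}\neq 0$. Once this is verified, the affine line of solutions can be reparameterized by choosing $t = P_{\mathcal{P}}$, so that every $P_H$ becomes an explicit affine function of the number of induced Petersen subgraphs in $\Gamma$. This gives both halves of the statement: $595$ counts are absolute constants, and the remaining $274$ are affine functions of $P_{\mathcal{P}}$.

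The main obstacle is computational rather than conceptual: the $4221\times 869$ parametric system is too large to reduce by hand, so the reduction must be carried out by computer algebra, and the integer arithmetic must be managed carefully to avoid spurious cancellations. A subtler point is the verification that $\beta_{\mathcal{P}}\neq 0$, which justifies using $P_{\mathcal{P}}$ as a valid coordinate on the solution line rather than an accidental constant; once $\beta_{\mathcal{P}}\neq 0$ is established, the parameterization in terms of Petersen subgraphs follows automatically.
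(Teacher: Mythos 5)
Your proposal matches the paper's route exactly: the theorem is obtained by running the algorithm for $srg(3250,57,0,1)$ with $o=10$, observing that the resulting $4221\times 869$ system has rank $868$, and analysing the one-dimensional solution space — including the essential check that the kernel coordinate of the Petersen graph is nonzero so that $P_{\mathcal{P}}$ can serve as the free parameter. Nothing is missing; the split into $595$ constant counts and $274$ Petersen-dependent ones is precisely the computational output the paper reports.
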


The number of induced subgraph of any type has to be a non-negative integer. This fact together with the solution of system of equations for $\Gamma$ and $o=10$ give the upper and lower bound for the number of induced Petersen graph $\Gamma$.

\begin{veta}
$\Gamma$ contains at most $266$ $266$ $000$ induced Petersen  graphs. The lower bound for the number of induced Petersen graphs in $\Gamma$ is equal to zero.
\end{veta}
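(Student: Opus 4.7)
The plan is to exploit the one-parameter family of solutions already isolated in the preceding proposition. By that proposition, for the putative $\Gamma = srg(3250,57,0,1)$ the equations produced by the algorithm at order $o=10$ have rank $868$ on $869$ unknowns, so there is exactly one degree of freedom in the solution. Following the previous theorem, I would take the free parameter to be $P := P_{\mathrm{Pet}}(\Gamma)$, the number of induced Petersen subgraphs.

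The first concrete step is to solve the system symbolically after substituting $(n,k,\lambda,\mu)=(3250,57,0,1)$, thereby expressing every count in the affine form $P_H = \alpha_H + \beta_H \cdot P$ for each of the $869$ graphs $H$ on $10$ vertices. By construction and by the previous theorem, $\beta_H = 0$ for the $595$ graphs with constant count, while $\beta_H \neq 0$ for the other $274$; the integers $\alpha_H, \beta_H$ are produced as a by-product of the algorithm's Gaussian elimination. In the special case $H = $ Petersen one has trivially $\alpha_{\mathrm{Pet}} = 0$ and $\beta_{\mathrm{Pet}} = 1$.

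The core of the argument is the non-negativity requirement: since $P_H$ counts induced subgraphs, $P_H \in \mathbb{Z}_{\geq 0}$. Each nontrivial constraint $\alpha_H + \beta_H \cdot P \geq 0$ yields a one-sided bound on $P$, namely $P \geq -\alpha_H/\beta_H$ when $\beta_H > 0$ and $P \leq -\alpha_H/\beta_H$ when $\beta_H < 0$. The lower bound $P \geq 0$ is immediate from the Petersen constraint itself and is therefore the best lower bound obtainable from this method (no positive lower bound can be proved this way, since zero is consistent with every inequality with $\beta_H > 0$ provided $\alpha_H \geq 0$, which will need to be checked in the course of the computation).

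The upper bound is obtained by scanning the $274$ nontrivial cases, computing $-\alpha_H/\beta_H$ whenever $\beta_H < 0$, and taking the minimum; the claim is that this minimum equals $266\,266\,000$. The main obstacle is not mathematical but computational and bookkeeping: one must reliably identify, among the $274$ affine expressions output by the algorithm, the graph $H^{\ast}$ of order $10$ whose non-negativity constraint is the binding one, and then verify numerically that $-\alpha_{H^{\ast}}/\beta_{H^{\ast}} = 266\,266\,000$. Because the algorithm already returns the symbolic coefficients in terms of $n, k, \mu$, this verification reduces to a finite arithmetic check once $H^{\ast}$ is located.
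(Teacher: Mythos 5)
Your proposal follows essentially the same route as the paper: the thesis derives the theorem directly from the remark that every $P_H$ must be a non-negative integer, combined with the one-free-parameter solution of the order-$10$ system for $srg(3250,57,0,1)$, exactly as you describe with the affine expressions $P_H=\alpha_H+\beta_H P$ and the minimization of $-\alpha_H/\beta_H$ over the constraints with $\beta_H<0$. Your added care about verifying $\alpha_H\geq 0$ for the constraints with $\beta_H>0$ (so that $P=0$ is genuinely consistent) is a reasonable refinement, but the argument is the paper's own.
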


\subsubsection{Subgraphs of arbitrary $SRGs$}

As we will see in the section \ref{geometry}, relationships among subgraphs of an arbitrary SRG can also be utilized. Moreover, they can be used also as a test of correctness of results obtained for triangle free case.

\begin{prop}
The system of equations provided by our algorithm for $srg(n,k,\lambda,\mu)$ and $o$ equal to $4$ and $5$ satisfies:
\begin{enumerate}
	\item If $o=4$ then the system contains $17$ equations on $11$ variables. The rank of this system is $10$, therefore it has $1$ free variable.
	\item If $o=5$ then the system contains $60$ equations on $34$ variables. The rank of this system is $31$, therefore it has $3$ free variables.
\end{enumerate}
\end{prop}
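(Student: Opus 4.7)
The statement is an inventory of sizes of the linear systems that the algorithm of Section \ref{sec-alg} produces when one does not impose the triangle-free restriction ($\lambda$ is now a free symbol), so my plan is essentially to execute the algorithm by hand (for $o=4$) and verify the counts, then appeal to symbolic Gaussian elimination (for $o=5$) to certify the rank.

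First I would fix the list $L$ of variables. From Table \ref{poctygrafov}, there are $11$ non-isomorphic graphs on $4$ vertices and $34$ on $5$ vertices, accounting for the claims ``$11$ variables'' and ``$34$ variables''. The graphs of order $o-1$ that the algorithm iterates over are the $4$ graphs on $3$ vertices (for $o=4$) and the $11$ graphs on $4$ vertices (for $o=5$).

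Next I would count the equations. For each graph $G$ of order $o-1$, Proposition \ref{prop_rovnice} contributes one equation of type \ref{rovnica1}.\ plus one equation for each orbit of $Aut(G)$ on $V_G$, $E_G$ and $\overline{E}_G$; one additional equation comes from the identity ${n\choose o}=\sum P_H$. For $o=4$, working through the four graphs $\overline{K_3}$, $K_2\cup K_1$, $P_3$, $K_3$ and tallying $(1+|\text{orbits on }V|+|\text{orbits on }E|+|\text{orbits on }\overline E|)$ yields $3+5+5+3=16$, and adding the global equation gives $17$. For $o=5$, the same computation for the $11$ graphs on $4$ vertices (where the non-trivial cases such as the paw, $K_4-e$, $P_3\cup K_1$ and $P_4$ each split into three orbit families) gives $3+6+4+8+5+7+4+5+8+6+3=59$, plus the global equation makes $60$. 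This matches the numbers in the proposition.

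The main work, and the main obstacle, is verifying the rank. Since the matrix $M$ of the algorithm is integer-valued (only the right-hand side $b$ depends on $n,k,\lambda,\mu$), the rank computation is a finite computation over $\mathbb{Q}$. I would construct $M$ explicitly using the function $f(S_G,H,u)$ of Section \ref{sec-alg} — for each $G$ of order $o-1$, each orbit $S_G$, and each $H\in L$, the row entry for the variable $P_H$ is $\sum_j f(S_G,H,u_j)|V_H^j|$. For $o=4$ the $17\times 11$ matrix is small enough to row-reduce by hand and one checks that a single relation holds among the rows, leaving a rank-$10$ system and thus $1$ free variable (in agreement with the known fact that in a general SRG the number of induced $K_4$ is not a function of $n,k,\lambda,\mu$ alone, as illustrated for the two $srg(16,6,2,2)$ in Section 1). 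For $o=5$ the $60\times 34$ matrix is too large for hand computation, so I would delegate this to the same software implementation used elsewhere in the thesis; the expected output is rank $31$, giving exactly $3$ free variables.

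Finally, from the rank-nullity observation ``number of free variables $=$ number of variables $-$ rank'' the stated figures $(11-10=1$ and $34-31=3)$ follow, completing the proof.
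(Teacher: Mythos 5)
Your proposal is correct and follows essentially the same route as the paper, which states this proposition as the output of the algorithm without a written proof: the variable counts come from Table \ref{poctygrafov}, the equation counts come from tallying, for each graph $G$ of order $o-1$, one equation of type \ref{rovnica1} plus one per orbit of $Aut(G)$ on $V_G$, $E_G$ and $\overline{E}_G$ (your sums $3+5+5+3+1=17$ and $59+1=60$ check out against the orbit structures of the four $3$-vertex and eleven $4$-vertex graphs), and the rank claims rest on explicit row reduction of the integer matrix $M$, exactly as in the paper's implementation. Your added observation that the two non-isomorphic $srg(16,6,2,2)$ force the rank to be at most $10$ for $o=4$ is a useful independent sanity check, but the exact values $10$ and $31$ still require the computation in both your account and the paper's.
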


\begin{veta}\label{order4}
For any graph $G$ of order $4$ the value $P_G$ for $\Gamma=srg(n,k,\lambda,\mu)$ depends on parameters $k$, $\lambda$, $\mu$ and on the value $P_1:=P_{K_{1,3}}$. 
\end{veta}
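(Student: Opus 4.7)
The plan is to specialize the general machinery of Proposition \ref{prop_rovnice} to $o=4$, apply it to every graph on three vertices, and then analyze the resulting linear system, exactly as is done in Proposition \ref{prop:pre4} for the triangle-free case but now keeping track of the parameter $\lambda$ as well. First, I would fix a list $G_1,\dots,G_{11}$ of all isomorphism types of graphs on $4$ vertices (so that the unknowns are $P_{G_1},\dots,P_{G_{11}}$), and a list of the four graphs on three vertices ($K_3$, $K_{1,2}$, the graph with a single edge, and $\overline{K_3}$). By Lemma \ref{3v} together with its obvious analogue for $srg(n,k,\lambda,\mu)$, the numbers of induced copies of each $3$-vertex graph in $\Gamma$ are polynomials in $n,k,\lambda,\mu$, so the right-hand sides of all equations produced by the algorithm are known quantities.

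Next, for each of the four $3$-vertex graphs $G$, I would write down the equations supplied by parts \ref{rovnica1}--\ref{rovnica4} of Proposition \ref{prop_rovnice}: one equation of type \ref{rovnica1} (adding an arbitrary new vertex), one equation per orbit of $V_G$ of type \ref{rovnica2}, one equation per orbit of $E_G$ of type \ref{rovnica3}, and one equation per orbit of $\overline{E}_G$ of type \ref{rovnica4}. This is precisely the procedure described in Section \ref{sec-alg}, and the counts are consistent with the claimed $17$ equations. The coefficients $f(S_G,H,u_j)|V^j_H|$ can be read off from the combinatorial structure of each $4$-vertex graph $H$ by inspecting how $G$ embeds into $H-u_j$.

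Having assembled the $17\times 11$ coefficient matrix $M$ with right-hand side $b$ (whose entries are polynomials in $k,\lambda,\mu$ thanks to the previous step), I would then perform row reduction on $M$ to establish that $\mathrm{rank}(M)=10$, so that the solution set is an affine line in $\R^{11}$. The main obstacle is the purely linear-algebraic verification that the kernel of $M$ has dimension exactly one and, crucially, that its unique (up to scalar) generator has a nonzero coordinate in the position of $K_{1,3}$; this is what lets us parametrize the solution by $P_1:=P_{K_{1,3}}$ rather than by some other count. Once this single nonvanishing coordinate is exhibited, standard Gaussian elimination expresses each $P_{G_i}$ as an affine function of $k,\lambda,\mu$ and $P_1$.

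Finally, I would verify consistency: the system is automatically compatible because every $srg(n,k,\lambda,\mu)$ produces a valid integer solution, and $n$ does not occur independently since Proposition \ref{prop-par} forces $n=1+k+k(k-\lambda-1)/\mu$. Putting the pieces together yields the statement: for every graph $G$ on $4$ vertices, $P_G$ is determined by $k,\lambda,\mu$ and $P_1$. I expect the only genuinely delicate point to be the verification that $K_{1,3}$ indeed lies in the support of the kernel vector; if it happened not to, the theorem as stated would fail, and a different graph would have to be chosen as the free parameter, so this check is the real content of the theorem beyond the generic statement ``one free variable.''
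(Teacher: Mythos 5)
Your proposal is correct and follows essentially the same route as the paper: the theorem is obtained precisely by generating the $17$ equations of Proposition \ref{prop_rovnice} for the four $3$-vertex graphs (plus the total-count equation), verifying that the resulting $17\times 11$ system has rank $10$, and checking that the one-dimensional kernel has a nonzero coordinate at $K_{1,3}$ so that $P_1$ can serve as the free parameter — the explicit formulas in the appendix (each $P_{G_i}$ written in terms of $k,\lambda,\mu,P_1$) are exactly the output of this elimination. You also correctly single out the only genuinely delicate point, namely that $K_{1,3}$ must lie in the support of the kernel vector, which the paper confirms implicitly through those formulas.
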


The previous theorem says that there is no graph on $4$ vertices for which its number of occurrences would be determined uniquely only by parameters of $\Gamma$. On the other hand, if we would know the number of occurrences of any graph on $4$ vertices then this value would be known for each graph of order $4$. An equivalent result can also be found in \cite{tc} where authors use mostly combinatorial tools.\par

\begin{prop} \label{order5}
Let us assume that the numbers of all induced subgraphs on $4$ in a $\Gamma=(n,k,\lambda,\mu)$ vertices are known. Then $P_G$ for any graph on $5$ vertices is determined by $k$, $\lambda$, $\mu$ and $3$ new parameters $P_2$, $P_3$ and $P_4$ denoting the number of induced copies of $K_{1,4}$, $K_{1,4}$ with one extra edge and $K_{2,3}$ in $\Gamma$, respectively. 
\end{prop}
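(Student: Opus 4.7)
The plan is to apply the algorithm of Section~\ref{sec-alg} with $o=5$ and then analyse the resulting linear system in exactly the same manner used to establish Theorem~\ref{order4}. As input I take the parameters $n,k,\lambda,\mu$ together with the $11$ values $P_{G_i}$ for graphs of order $4$; by hypothesis (or by Theorem~\ref{order4}) all of these are known as polynomials in $k,\lambda,\mu$ and the single free variable $P_1 = P_{K_{1,3}}$. Loop over each of the eleven order-$4$ graphs $G$ and each orbit of $V_G$, $E_G$, $\overline E_G$ under $\mathrm{Aut}(G)$, generating an equation of one of the four types from Proposition~\ref{prop_rovnice}. The accompanying proposition preceding this statement records that exactly $60$ equations in $34$ unknowns arise.

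Next I would record the output as a matrix equation $Mx=b$, where $x=(P_{H_1},\dots,P_{H_{34}})$ ranges over induced counts of all order-$5$ graphs, $M\in\mathbb Z^{60\times 34}$ depends only on the combinatorial data of how order-$4$ graphs embed into order-$5$ graphs (cf.\ the function $f$ and Lemma~\ref{lema-sumy}), and $b$ is a vector whose entries are linear in the $11$ known order-$4$ counts. Gaussian elimination over $\mathbb Q$ on $M$ returns the rank $31$, exactly matching the preceding proposition, so the solution space is a $3$-dimensional affine subspace of $\mathbb Q^{34}$.

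It remains to justify the specific choice of the three free parameters $P_2 := P_{K_{1,4}}$, $P_3 := P_{K_{1,4}+e}$, and $P_4 := P_{K_{2,3}}$. This is equivalent to the claim that the $60\times 31$ submatrix $M'$ obtained by deleting the three columns of $M$ indexed by these graphs still has rank $31$. I would verify this directly by row-reducing $M'$; if so, any solution of $Mx=b$ is determined uniquely by the values of $P_2,P_3,P_4$, and solving back gives each $P_{H_i}$ as an explicit affine function of $k,\lambda,\mu$, the eleven order-$4$ counts, and $P_2,P_3,P_4$.

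The computation itself is routine once the algorithm of Section~\ref{sec-alg} is implemented, so the main obstacle is not algebraic but combinatorial: identifying a triple of order-$5$ graphs whose columns remain linearly independent modulo the image of the other $31$ columns. Picking three structurally distinct graphs (a star, a near-star, and a bipartite graph) is a natural heuristic and, as one checks, it succeeds here; had it failed, one would have had to iterate over candidate triples until a valid transversal to the null space was found. In particular, an a priori argument for why $\{K_{1,4},K_{1,4}+e,K_{2,3}\}$ is a valid basis for the free parameters does not seem to be available without consulting the explicit matrix $M$.
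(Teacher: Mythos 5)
Your proposal matches the paper's approach: the paper gives no separate argument for this proposition but derives it directly from the computational analysis of the $60\times 34$ system of rank $31$ recorded in the preceding proposition, which is exactly the calculation you describe. Your added remark that the choice of $K_{1,4}$, $K_{1,4}$ with an extra edge, and $K_{2,3}$ as free parameters must be certified by checking that the remaining columns still have rank $31$ is the correct (and, as you note, essentially the only available) justification, and is implicitly what the authors' implementation carries out.
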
 

In the case when $\Gamma$ is triangle-free, the number $K_{1,3}$ contained in $\Gamma$  equals $n{k \choose 3}$. So, the numbers of induced subgraphs in $\Gamma$ are determined uniquely by its parameters which is in correspondence with Proposition \ref{prop:pre4}. \\
If $P_2$, $P_3$ and $P_4$ denote the same graphs as in the Proposition \ref{order5} and $\Gamma$ is triangle-free, then $P_2=n{k \choose 4}$ and $P_3=0$. Each non-edge of $\Gamma$ lies in ${\mu \choose 3}$ induced $K_{2,3}$ subgraphs. So the value $P_4$ is determined uniquely as well. Thus we obtain an independent proof of Theorem \ref{tf5}.

\newpage
\section{Geometry of the spectra of $srg$} \label{geometry}
This chapter brings two approaches for dealing with $SRG$s together. The first comes from the algorithm that we designed. The second follows from the geometrical representation of SRGs.
\par The idea comes from the work of Bondarenko, Prymak and Radchenko \cite{geometry}. They use a geometrical representation of vertices of SRG on the unit sphere $S^{g-1}$ in the Euclidean vector space $\mathbb{R}^g$ (introduced in Section \ref{abs-rep}), extended by vectors representing edges of SRG as a normalized sum of vertices that form a given edge. This can be easily generalized to representation of any induced subgraph of a SRG on $S^{g-1}$. The inner product of two representatives of subgraphs follows from the inner product of vertices inducing them and from their mutual positions. Therefore they derive relations between numbers of induced subgraphs of SRGs on up to four vertices. 
As we have already proven, these numbers are uniquely determined by parameters $n$, $k$, $\lambda$ and $\mu$ for all subgraphs of order at most $3$. In the case of order $4$, it is sufficient to know the number of occurrences of any subgraph on four vertices in a given SRG. This corresponds to the result in \cite{geometry}. 
The Riesz representation theorem allows the authors to convert vectors of $S^{g-1}$ into special polynomials with inputs from the sphere of the same dimension. The inner product of two such polynomials is then given by the inner product of vectors of the original representation. Here they can use classical properties of spaces with an inner product such as Cauchy-Schwartz inequality, which leads to the lower bound for number of induced $K_4$ in SRG. In the case of a $srg(76,30,8,14)$, this lower bound is greater than zero. This step allows them to show that it has to contain one of the three larger induced subgraphs. Each of these cases leads to a contradiction.
\par The technique was not developed for subgraphs of higher orders. The reason may have been increasing complexity of dependencies between induced subgraphs of SRG. Since our algorithm derives these dependencies, we can generalised the idea just described. Most interesting for us is to derive the bound for the number of induced $K_{3,3}$ subgraphs in tfSRGs. In this case we need to consider mutual positions between subgraphs of order at most $3$.

\subsection{Zonal spherical harmonics}
This section describes an alternative view on the geometrical representation of $SRGs$ \cite{geometry}. Instead of working directly with vectors of the sphere $S^{g-1}$ we will translate the whole problem into the world of special polynomials with inputs from $S^{g-1}$. The inner product of these polynomials will depend on the inner product of "vertices" projected into the eigenspace $V_2$. This connection allows us to use properties of polynomials for SRGs.\par
The tools leading to this representation of SRGs are outside of the area of our study, so we include only definitions and statements of important results without proofs. For more details we recommend \cite{harm} and \cite{harm2}.

\begin{veta}[Riesz representation theorem]\label{riesz}
Let $H$ be a Hilbert space with inner product $\langle .,. \rangle$ and let $H^*$ be the dual space of $H$. For any functional $g\in H^*$ there exists a unique $y\in H$ such that

$$g(x)=\langle x,y \rangle,\textrm{ for all } x\in {H}$$

\end{veta}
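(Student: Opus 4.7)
The plan is to follow the standard Hilbert space proof, which splits cleanly into an existence argument that uses orthogonal complements (and hence completeness) and a short uniqueness argument that uses only the definiteness of the inner product. First I would dispose of the trivial case $g\equiv 0$ by taking $y=0$. So assume $g\neq 0$, and note that because $g\in H^*$ is continuous, its kernel $M:=\ker(g)$ is a closed, proper subspace of $H$.

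Next, the main step: I would invoke the orthogonal decomposition theorem for closed subspaces of a Hilbert space to write $H = M \oplus M^{\perp}$. Since $M$ is a proper closed subspace, $M^{\perp}\neq\{0\}$, so I can pick a unit vector $z\in M^{\perp}$ with $g(z)\neq 0$. For an arbitrary $x\in H$, the vector $w := g(x)\,z - g(z)\,x$ satisfies $g(w)=0$, so $w\in M$ and therefore $\langle w,z\rangle = 0$. Expanding this inner product gives $g(x)\,\|z\|^{2} = g(z)\,\langle x,z\rangle$, and since $\|z\|=1$ this rearranges to
$$g(x) = \bigl\langle x,\ \overline{g(z)}\,z\bigr\rangle.$$
Thus the choice $y := \overline{g(z)}\,z$ works for every $x\in H$.

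For uniqueness, suppose $\langle x,y_1\rangle = \langle x,y_2\rangle$ for all $x\in H$. Then $\langle x, y_1-y_2\rangle = 0$ for all $x$; in particular, setting $x = y_1-y_2$ yields $\|y_1-y_2\|^{2}=0$, and definiteness of the inner product gives $y_1=y_2$.

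The main obstacle is the existence of a nonzero vector orthogonal to $M$, which genuinely requires completeness of $H$ (a mere inner product space would fail here); one standard route is through the closest-point projection onto the closed convex set $M$, established via a minimizing sequence that is Cauchy by the parallelogram identity. Everything else is algebraic manipulation with the inner product, but this geometric step is where the Hilbert space hypothesis is used.
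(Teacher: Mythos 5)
Your proof is correct and complete: the reduction to the kernel, the use of the orthogonal decomposition $H = M \oplus M^{\perp}$ to produce the representing vector $y = \overline{g(z)}\,z$, and the definiteness argument for uniqueness are all sound, and you correctly identify the projection theorem (hence completeness) as the one genuinely non-algebraic ingredient. Note, however, that the paper deliberately states this theorem without proof, as a classical result imported from the harmonic-analysis references, so there is no argument in the text to compare yours against; what you have written is the standard textbook proof and would serve perfectly well if a proof were to be included.
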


We say that a function $f:\mathbb{R}^g\longrightarrow \mathbb{R}$ is homogeneous of degree $t$ if and only if $f(ax)=a^tf(x)$ for all $x\in \mathbb{R}^g$ and $a>0$.

\begin{definicia}
Let $\mathcal{P}_t(g)$ denote the space of homogeneous polynomials of degree $t$ in $g$ variables with real coefficients. Then $\mathcal{P}_t(S^{g-1})$ is the restriction of polynomials in $\mathcal{P}_t(g)$ to the sphere $S^{g-1}$.\\
Let $\mathcal{H}_t(g)$ denote the space of (real) harmonics polynomial defined as
	
	$$\mathcal{H}_t(g)=\{P\in\mathcal{P}_t(g)|\Delta P=0\}.$$
	
The symbol $\Delta$ in the definition above represents the Laplace operator and its acting on $P$ satisfies 
	
$$\Delta P=\sum_{i=1}^{g}\frac{\partial^2P}{x_i}.$$
	
Finally, let $\mathcal{H}_t(S^{g-1})$ be the set of restrictions of harmonic polynomials in $\mathcal{H}_t(g)$ to the sphere $S^{g-1}$.
\end{definicia}

Elements of $\mathcal{H}_t(S^{g-1})$ are called (real) \textit{spherical harmonics} and form a Hilbert space with the usual inner product

$$\left\langle P, Q\right\rangle=\int_{S^{g-1}}P(x)Q(x)d\omega_g(x),$$
where $P,Q\in \mathcal{H}_t(S^{g-1})$ and $\mu_g$ is a Lebesque measure on the unit sphere $S^{g-1}$. 

We are now prepared to define polynomials properties of which are useful for our purpose.

\begin{definicia}
For a fixed vector $x\in S^{g-1}$, the \textit{zonal spherical harmonic} $Z^t_x$ of degree $t$ is defined as the dual (Riesz) representation of the mapping $P\mapsto P(x)$ where $P\in \mathcal{H}_t(S^{g-1})$. In other words, $Z^t_x(y)$ satisfies the following reproducing property: 

$$P(x)=\int_{S^{g-1}}Z^t_x(y)P(y)d\omega_g(y),$$

for all $P\in \mathcal{H}_t(S^{g-1})$.
\end{definicia}

The following property follows from the uniqueness of the element $y$ in Theorem \ref{riesz}.

\begin{lema}\label{ip}
The zonal spherical harmonics satisfy the equality:

$$\left\langle Z^t_y, Z^t_x\right\rangle=\int_{S^{g-1}}Z^t_y(\xi)Z^t_x(\xi)d\omega_g(\xi)=Z^t_y(x)$$
\end{lema}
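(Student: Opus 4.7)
The plan is to apply the reproducing property directly with a carefully chosen test function. Recall that $Z^t_x$ is defined as the Riesz dual of the evaluation functional $P \mapsto P(x)$ on the Hilbert space $\mathcal{H}_t(S^{g-1})$. By Theorem \ref{riesz}, this representer exists and is itself an element of $\mathcal{H}_t(S^{g-1})$. In particular, $Z^t_y$ lies in $\mathcal{H}_t(S^{g-1})$ and is therefore a legal input to the reproducing identity in the definition.

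First I would instantiate the reproducing property
$$P(x) = \int_{S^{g-1}} Z^t_x(\xi) P(\xi)\, d\omega_g(\xi)$$
with the specific choice $P = Z^t_y$, which is permissible by the observation above. This yields
$$Z^t_y(x) = \int_{S^{g-1}} Z^t_x(\xi) Z^t_y(\xi)\, d\omega_g(\xi) = \langle Z^t_x, Z^t_y \rangle.$$
Since the inner product on the real Hilbert space $\mathcal{H}_t(S^{g-1})$ is symmetric, $\langle Z^t_x, Z^t_y \rangle = \langle Z^t_y, Z^t_x \rangle$, which gives the claimed equality.

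The role of the uniqueness clause of Theorem \ref{riesz} (emphasized in the remark preceding the lemma) is subtle but important: it guarantees that $Z^t_y$ is a well-defined single element of $\mathcal{H}_t(S^{g-1})$, so that the substitution $P = Z^t_y$ is unambiguous. I do not foresee any real obstacle here, since the lemma is essentially a direct unwinding of the reproducing kernel formalism; the only thing one must be careful about is that the symbol $Z^t_y$ genuinely denotes a spherical harmonic (so that it is a valid test function), rather than merely a functional. Once that is observed, the proof reduces to the one-line substitution above together with symmetry of the inner product.
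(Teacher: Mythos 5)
Your proof is correct and is exactly the argument the paper intends: the paper gives no detailed proof, only the remark that the identity follows from the Riesz representation theorem, and your substitution of $P = Z^t_y$ into the reproducing property (justified by the fact that the Riesz representer is itself an element of $\mathcal{H}_t(S^{g-1})$) is the standard way to unwind that remark. Nothing is missing.
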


Each polynomial $Z^t_x$ is an element of $\mathcal{H}_t(S^{g-1})$, which is a Hilbert space. Hence we can use the Cauchy-Schwartz inequality for $Z^t_x$ and $Z^t_y$.

$$\langle Z^t_x,Z^t_y\rangle ^2\leq \langle Z^t_x,Z^t_x\rangle \langle Z^t_y,Z^t_y\rangle $$

Now If $X=\{x_1,x_2,\dots\}$ and $Y=\{y_1,y_2,\dots\}$ are two finite subsets of points from the unit sphere $S^{g-1}$, then:

\begin{center}
\begin{align*}
\left(\sum_{i,j}\langle Z^t_{x_i},Z^t_{y_j}\rangle \right)^2&=\left(\left\langle \sum_{i}Z^t_{x_i},\sum_{j}Z^t_{y_j}\right\rangle \right)^2\\
&\\
						   	   &\leq \left\langle \sum_{i}Z^t_{x_i},\sum_{i}Z^t_{x_i}\right\rangle  \left\langle \sum_{j}Z^t_{y_j},\sum_{j}Z^t_{y_j}\right\rangle \\
						   	   &\\
						       &= \sum_{i,i'}\langle Z^t_{x_i},Z^t_{x_{i'}}\rangle  \sum_{j,j'}\langle Z^t_{y_j},Z^t_{y_{j'}}\rangle 
\end{align*}
\end{center}

Using lemma \ref{ip}, which uniquely determines the value of the inner product of two zonal harmonics , we have: 

\begin{equation}\label{ineqZ}
\left(\sum_{i,j}Z^t_{x_i}(y_j) \right)^2
\leq
\sum_{i,i'} Z^t_{x_i}(x_{i'}) \sum_{j,j'} Z^t_{y_j}(y_{j'})
\end{equation}

For our purpose the definition of zonal spherical harmonics that use Gegenbauer polynomial is more useful. The reason will be obvious as soon as we present this alternative. 

\begin{definicia}
The \textit{Gegenbauer polynomial} $C^\alpha_t(x)$ is defined recursively by
\begin{center}
\begin{align*}
C_0^{\alpha}(x)&=1,\\
C_1^{\alpha}(x)&=2\alpha x,\\
			   &\vdots \\
C_t^{\alpha}(x)&=\frac{1}{t}\left[2x(\alpha+t-1)C_{t-1}^{\alpha}(x)-(2\alpha+t-2)C_{n-2}^{\alpha}(x)\right].
\end{align*}
\end{center}

This can be written parametrically as 

$$C_t^{\alpha}(x)=\sum_{i=0}^{t/2}\frac{(-1)^i\alpha_{t-i}(2x)^{t-2i}}{i!(t-2i)!},$$
where $\alpha_t=\alpha(\alpha+1)\dots(\alpha+t-1)$.\\

The Gegenbauer polynomial is even if the parameter $t$ is even number (in the other case it is odd).
\end{definicia}

\begin{prop}\label{propZC}
The zonal spherical harmonic $Z^t_x(y)$ can be expressed as follows.

$$Z^t_x(y)=\frac{1}{c_{g,t}}C^\alpha_t(x\cdot y),$$
where $\alpha=\frac{g-2}{2}$ and $x\cdot y$ is the usual inner product of vectors $x,y\in\mathbb{R}^g$. The value $c_{g,t}$ is constant and satisfies

$$c_{g,t}=\frac{1}{\omega_{g-1}}\frac{2t+g-2}{g-2},$$
where $\omega_{g-1}$ denotes the surface area of the $g-1$ dimensional sphere.
\end{prop}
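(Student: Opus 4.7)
The plan is to identify $Z^t_x$ as the reproducing kernel of $\mathcal{H}_t(S^{g-1})$, show by a symmetry argument that it must be a polynomial in the single variable $x\cdot y$, then pin down that polynomial as a Gegenbauer polynomial via the orthogonality of harmonics of distinct degrees, and finally determine the scaling constant from the value on the diagonal.

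First I would establish that $Z^t_x(y)$ depends on $y$ only through the inner product $x\cdot y$. For any orthogonal transformation $R\in O(g)$ fixing $x$ and any $P\in\mathcal{H}_t(S^{g-1})$, the polynomial $P\circ R$ still lies in $\mathcal{H}_t(S^{g-1})$ (harmonicity and degree are rotation-invariant), so the reproducing property gives $P(x)=(P\circ R)(x)=\int Z^t_x(y)P(Ry)\,d\omega_g(y)=\int Z^t_x(R^{-1}y)P(y)\,d\omega_g(y)$. By the uniqueness clause of Theorem \ref{riesz}, $Z^t_x(R^{-1}y)=Z^t_x(y)$ for every rotation $R$ fixing $x$. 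Since the orbits of the stabiliser of $x$ on $S^{g-1}$ are exactly the level sets of $y\mapsto x\cdot y$, there exists a univariate polynomial $f_t$ with $Z^t_x(y)=f_t(x\cdot y)$, and $\deg f_t\le t$ because $Z^t_x$ is homogeneous of degree $t$ in $y$.

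Next I would show $f_t$ is a constant multiple of $C^{\alpha}_t$ with $\alpha=(g-2)/2$. Harmonics of different degrees are $L^2$-orthogonal on the sphere, so for any $P\in\mathcal{H}_s(S^{g-1})$ with $s\ne t$ the reproducing property applied at a fixed $x$ yields $\int_{S^{g-1}}f_t(x\cdot y)P(y)\,d\omega_g(y)=0$. Choosing $P$ to be the restriction of a homogeneous harmonic polynomial of the form $u\mapsto C^{\alpha}_s(x\cdot u)\cdot(\text{radial factor})$ and changing variables $u=x\cdot y$ collapses the sphere integral to a one-variable integral against the weight $(1-u^2)^{\alpha-1/2}$ (the standard Funk--Hecke slicing). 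This shows $\int_{-1}^{1}f_t(u)C^{\alpha}_s(u)(1-u^2)^{\alpha-1/2}\,du=0$ for every $s<t$. Since the Gegenbauer polynomials are the unique (up to scaling) monic orthogonal basis for this weight, $f_t$ must be proportional to $C^{\alpha}_t$; write $f_t=\gamma_{g,t}\,C^{\alpha}_t$.

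Finally I would compute $\gamma_{g,t}=1/c_{g,t}$ by matching on the diagonal. By Lemma \ref{ip}, $Z^t_x(x)=\langle Z^t_x,Z^t_x\rangle=\|Z^t_x\|^2$, and a standard reproducing-kernel identity (sum over any orthonormal basis $\{Y_j\}$ of $\mathcal{H}_t(S^{g-1})$) gives $Z^t_x(x)=\sum_j Y_j(x)^2$; integrating this identity over $S^{g-1}$ and dividing by $\omega_{g-1}$ shows that $Z^t_x(x)$ equals $\dim\mathcal{H}_t(S^{g-1})/\omega_{g-1}$, which is constant in $x$. The classical dimension formula $\dim\mathcal{H}_t(S^{g-1})=\binom{g+t-1}{t}-\binom{g+t-3}{t-2}$ together with the evaluation $C^{\alpha}_t(1)=\binom{2\alpha+t-1}{t}$ then yields $\gamma_{g,t}\,C^{\alpha}_t(1)=\frac{1}{\omega_{g-1}}\dim\mathcal{H}_t(S^{g-1})$, and an elementary simplification collapses the right-hand side to $\frac{2t+g-2}{(g-2)\omega_{g-1}}$, giving exactly $c_{g,t}$ as stated.

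The main obstacle is the last step: reducing the two binomial expressions for $\dim\mathcal{H}_t(S^{g-1})$ and $C^{\alpha}_t(1)$ to the clean closed form $(2t+g-2)/(g-2)$ requires the right algebraic manipulation (and a separate treatment for $g=2$, where the formula must be interpreted as a limit because $\alpha=0$). Once that identity is in hand, everything else is a direct consequence of the Riesz uniqueness and spherical-harmonic orthogonality already invoked.
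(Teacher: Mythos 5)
The paper does not prove this proposition at all: the section explicitly defers to the cited references on spherical harmonics and states the result without argument, so there is no in-paper proof to compare against. Your proposal is the standard proof of the addition theorem (stabiliser-invariance forces $Z^t_x(y)=f_t(x\cdot y)$, orthogonality of distinct-degree harmonics under the Gegenbauer weight forces $f_t\propto C^\alpha_t$, and the diagonal value fixes the constant), and as a strategy it is sound. Two points need attention. First, in the orthogonality step you test against functions ``of the form $u\mapsto C^\alpha_s(x\cdot u)\cdot(\text{radial factor})$'' and assert they lie in $\mathcal{H}_s(S^{g-1})$; that is precisely the Gegenbauer--harmonic correspondence you are in the middle of proving, so as written the step is circular. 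The repair is easy: test against the zonal harmonics $Z^s_x=f_s(x\cdot\,\cdot\,)$ themselves for $s<t$, and note by induction that $f_0,\dots,f_{t-1}$ span all polynomials of degree $<t$ (each $f_s$ has degree exactly $s$, since otherwise it would be orthogonal to itself).

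Second, your normalization bookkeeping is inconsistent with your own computation. You announce $\gamma_{g,t}=1/c_{g,t}$, but the identity you derive, $\gamma_{g,t}\,C^\alpha_t(1)=\dim\mathcal{H}_t(S^{g-1})/\omega_{g-1}$, together with $\dim\mathcal{H}_t(S^{g-1})/C^\alpha_t(1)=\frac{2t+g-2}{g-2}$, gives $\gamma_{g,t}=c_{g,t}$, i.e.\ $Z^t_x(y)=c_{g,t}\,C^\alpha_t(x\cdot y)$ with the constant \emph{multiplying} rather than dividing. This is in fact the standard form of the addition theorem, which suggests the displayed formula in the proposition has $c_{g,t}$ inverted rather than your derivation being wrong; but you should resolve the discrepancy explicitly instead of declaring that the answer is ``exactly $c_{g,t}$ as stated.'' Note that the placement of the constant is immaterial for the later Proposition~\ref{propG}, where it appears squared on both sides of the Cauchy--Schwarz inequality and cancels. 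Your remark about $g=2$ requiring a limiting interpretation is correct but irrelevant here, since $g$ is the multiplicity of a nontrivial eigenvalue of a primitive SRG and is well above $2$ in every application.
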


After the transition to new definition of zonal harmonics we obtain an easy way to compute their values for given vectors $x$ and $y$. Moreover, the constant $c_{g,t}$ does not depends on the choice of $x$ and $y$. It follows that the inequality \ref{ineqZ} can be rewritten in a more usable form.

\begin{prop}\label{propG}
Let $X=\{x_1,x_2,\dots\}$ and $Y=\{y_1,y_2,\dots\}$ be finite subsets of points of the unit sphere $S^{g-1}$.
Then the Gegenbauer polynomial $C^\alpha_t$ with $\alpha=\frac{g-2}{2}$ has the following property

$$\left(\sum_{i,j}C^\alpha_t(x_i\cdot y_j) \right)^2
\leq
\sum_{i,i'} C^\alpha_t(x_i\cdot x_{i'}) \sum_{j,j'} C^\alpha_t(y_j\cdot y_{j'})$$
\end{prop}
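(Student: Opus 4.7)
The plan is to observe that this proposition is essentially a restatement of the Cauchy-Schwarz inequality \eqref{ineqZ} (which was already derived for zonal spherical harmonics), once we translate from zonal harmonics to Gegenbauer polynomials using Proposition \ref{propZC}. So almost no new work is required; the whole argument is a substitution combined with the cancellation of a common positive constant.

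Concretely, I would start from the inequality
\[
\left(\sum_{i,j}Z^t_{x_i}(y_j) \right)^2 \leq \sum_{i,i'} Z^t_{x_i}(x_{i'}) \sum_{j,j'} Z^t_{y_j}(y_{j'}),
\]
which has already been established from Cauchy-Schwarz on the Hilbert space $\mathcal{H}_t(S^{g-1})$ together with Lemma \ref{ip}. Then I would apply Proposition \ref{propZC}, which says $Z^t_x(y)=\frac{1}{c_{g,t}}C^\alpha_t(x\cdot y)$ with $\alpha=\frac{g-2}{2}$. The crucial point is that $c_{g,t}$ depends only on $g$ and $t$, not on the particular vectors $x,y\in S^{g-1}$, so it can be factored out of every sum.

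After substitution, the left-hand side becomes $\frac{1}{c_{g,t}^{\,2}}\bigl(\sum_{i,j}C^\alpha_t(x_i\cdot y_j)\bigr)^2$ and the right-hand side becomes $\frac{1}{c_{g,t}^{\,2}}\sum_{i,i'}C^\alpha_t(x_i\cdot x_{i'})\sum_{j,j'}C^\alpha_t(y_j\cdot y_{j'})$. Multiplying both sides by $c_{g,t}^{\,2}$ (which is positive for $g\geq 3$, as seen from the explicit formula $c_{g,t}=\frac{1}{\omega_{g-1}}\frac{2t+g-2}{g-2}$) yields exactly the claimed inequality.

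There is essentially no main obstacle in the argument itself; the only minor point of care is to check that we have $c_{g,t}>0$ so that the common factor on both sides cancels without reversing the inequality. (For the degenerate case $g=2$ one should interpret $C^\alpha_t$ via the appropriate Chebyshev-polynomial limit, but in our setting $g$ is the multiplicity of a nontrivial eigenvalue of a primitive $SRG$, so $g\geq 2$ and in practice $g\geq 3$.) Thus the proof is essentially a one-line deduction from \eqref{ineqZ} and Proposition \ref{propZC}.
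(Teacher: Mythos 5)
Your proposal is correct and follows exactly the route of the paper's own (one-line) proof: substitute the relation $Z^t_x(y)=\frac{1}{c_{g,t}}C^\alpha_t(x\cdot y)$ from Proposition \ref{propZC} into inequality \ref{ineqZ} and cancel the common positive factor $c_{g,t}^{\,2}$. Your extra remark on the positivity of $c_{g,t}$ is a sensible detail the paper leaves implicit.
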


\begin{proof}
The property comes straightforwardly from inequality \ref{ineqZ} and proposition \ref{propZC}.
\end{proof}

\subsection{Gegenbauer polynomials and induced subgraphs in $SRGs$}

Let us recall the geometric representation of $\Gamma=srg(n,k,\lambda,\mu)$ from the section \ref{abs-rep} which uses the spectra of $\Gamma$. Each vertex $v_i\in V_\Gamma$ can be represented by the vector $x_i\in S^{g-1}$, where $g$ is a multiplicity of non-trivial eigenvalue of $\Gamma$. The angle between two representatives $x_i$ and $x_j$ depends only on the adjacency between $v_i$ and $v_j$:

$$
					\begin{array}{ll*{3}{l}}
					x_i\cdot x_j &=& \left\{\begin{array}{ll}
									1 & \textrm{ if } i=j,\\
									p & \textrm{ if } i\sim j,\\
									q & \textrm{ otherwise, }
									\end{array}\right.
					\end{array}
$$

Let $\Delta$ be a subgraph of $\Gamma$ induced by vertices $\{v^\Delta_1,v^\Delta_2,\dots,v^\Delta_o\}$. The vector $x^\Delta$ constructed as

$$x^\Delta=\frac{\sum_{i=1}^{o}x^\Delta_i}{||\sum_{i=1}^{o}x^\Delta_i||}$$
will represent the subgraph $\Delta$ on the sphere $S^{g-1}$.\\ 
Let us fix two graphs $G$ and $H$, and let $X$ and $Y$ be systems of all subgraphs of $\Gamma$ isomorphic to $G$ and $H$, respectively. Hence, the inner product of representatives of graphs $\Delta\in X$ and $\Omega\in Y$ satisfies:

$$ x^\Delta \cdot x^\Omega=\frac{\sum_{i,j}x^\Delta_i\cdot x^\Omega_j}{|\sum_{i}x^\Delta_i||\sum_{j}x^\Omega_j|}.$$

The value $x^\Delta_i\cdot x^\Omega_j$ equals $1$, $p$ or $q$ depending on adjacency of vertices $v^\Delta_i$ and $ v^\Omega_j$. In other words, if we denote $e(\Delta,\Omega)$ the cardinality of the set $\{(u,v)| u\in V_\Delta, v\in V_\Omega,u\sim v \}$ and $\overline{e}(\Delta,\Omega)$ the cardinality of the set $\{(u,v)| u\in V_\Delta, v\in V_\Omega,u\nsim v \}$ then

$$ x^\Delta \cdot x^\Omega=\frac{|V_\Delta\cap V_\Omega|+ e(\Delta,\Omega)p+ \overline{e}(\Delta,\Omega)q}
{\sqrt{\left(|V_\Delta|+2p|E_\Delta|+2q|\overline{E}_\Delta| \right) \left(|V_\Omega|+2p|E_\Omega|+2q|\overline{E}_\Omega|\right)}}.$$

Note that this expression depends only on the type of a subgraph induced by vertices $V_\Delta\cup V_\Omega$. This fact will be important later.\\
Let us consider a system $L$ of such graphs $W$ for which there exist two subsets $V^1_{W}, V^2_W$ of $V_W$ with following properties. First, the union $V^1_W\cup V^2_W$ is the whole set $V_W$. Second, $V^1_{W}$ induces a subgraph isomorphic to $G$ in $W$ and $V^2_{W}$ induces a subgraph isomorphic to $H$ in $W$.

\begin{lema}\label{lemaG}
Let us continue with notation $L$, $X$ and $Y$ in correspondence with the discussion above. Then 

\begin{align*}
&\sum_{\substack{\Delta\in X \\ \Omega\in Y}} C(x^\Delta\cdot x^\Omega)\\
&\parallel \\
&\sum_{W\in L}P_W(\Gamma) 
\sum_{\substack{\Delta,\Omega\subset W \\ V_\Delta \cup V_\Omega=V_W}} 
\frac{|V_\Delta\cap V_\Omega|+ e(\Delta,\Omega)p+ \overline{e}(\Delta,\Omega)q}
{\sqrt{\left(|V_\Delta|+2p|E_\Delta|+2q|\overline{E}_\Delta| \right) \left(|V_\Omega|+2p|E_\Omega|+2q|\overline{E}_\Omega|\right)}},
\end{align*}
where $\Delta$ and $\Omega$ represent fixed subgraphs in $\Gamma$ or in $W$ that are isomorphic to $G$ and $H$, respectively. $P_W(\Gamma)$ is  the number of induced subgraphs of $\Gamma$ that are isomorphic to $W$.
\end{lema}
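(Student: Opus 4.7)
The plan is to reorganize the left-hand sum by grouping pairs $(\Delta,\Omega)\in X\times Y$ according to the isomorphism type of the subgraph of $\Gamma$ they jointly induce on $V_\Delta\cup V_\Omega$. First I would observe that for any such pair, the vertex set $V_\Delta\cup V_\Omega$ admits a decomposition into $V_\Delta$ (which induces a copy of $G$) and $V_\Omega$ (which induces a copy of $H$), so the induced subgraph of $\Gamma$ on $V_\Delta\cup V_\Omega$ has isomorphism type lying in the family $L$. Conversely, every element of $L$ is a candidate type.

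The key input is the remark made immediately before the lemma: the explicit formula $x^\Delta\cdot x^\Omega=\bigl(|V_\Delta\cap V_\Omega|+e(\Delta,\Omega)p+\overline{e}(\Delta,\Omega)q\bigr)/\sqrt{(\cdot)(\cdot)}$ depends only on the isomorphism type $W$ of the subgraph induced on $V_\Delta\cup V_\Omega$ together with the distinguished subsets $V_\Delta,V_\Omega\subseteq V_W$. Indeed, $|V_\Delta|,|V_\Omega|,|E_\Delta|,|\overline{E}_\Delta|,|E_\Omega|,|\overline{E}_\Omega|,|V_\Delta\cap V_\Omega|,e(\Delta,\Omega),\overline{e}(\Delta,\Omega)$ are all functions of this combinatorial data, and the same will be true of any function applied to the inner product (matching the role played by $C$).

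Next, for a fixed abstract graph $W\in L$ and a fixed induced subgraph $\Gamma_W\subseteq\Gamma$ of type $W$, I would establish a bijection between the set of pairs $(\Delta,\Omega)\in X\times Y$ with $V_\Delta\cup V_\Omega=V_{\Gamma_W}$ and the set of ordered pairs of subgraphs $(\Delta',\Omega')$ of $W$ satisfying $V_{\Delta'}\cup V_{\Omega'}=V_W$, $\Delta'\simeq G$, $\Omega'\simeq H$. Under any isomorphism $\Gamma_W\to W$, the first set is mapped onto the second, and the inner product value is preserved by the remark above. Since there are exactly $P_W(\Gamma)$ copies $\Gamma_W$ of $W$ inside $\Gamma$, summing over all $\Gamma_W$ of a fixed type $W$ yields $P_W(\Gamma)$ times the inner sum displayed on the right-hand side. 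Summing over $W\in L$ then accounts for every pair $(\Delta,\Omega)\in X\times Y$ exactly once, giving the claimed identity.

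The only real obstacle is the bookkeeping in the bijection: one must verify that every pair $(\Delta,\Omega)\in X\times Y$ is captured by exactly one $W\in L$ (namely the isomorphism class of the subgraph of $\Gamma$ on $V_\Delta\cup V_\Omega$), that $(\Delta,\Omega)$ is treated as an ordered pair consistently with the inner sum on the right, and that the definition of $L$ (requiring $V_W^1\cup V_W^2=V_W$, $V_W^1$ inducing $G$, $V_W^2$ inducing $H$) matches exactly the subgraphs arising from such unions. All of this is routine, but it is where any subtle miscounting would hide.
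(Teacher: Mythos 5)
Your argument is correct, and in fact the thesis states Lemma \ref{lemaG} without any proof, so there is nothing to compare it against; the regrouping of pairs $(\Delta,\Omega)\in X\times Y$ by the isomorphism type of the subgraph induced on $V_\Delta\cup V_\Omega$, followed by transporting each such pair along a fixed isomorphism onto a pair of covering subgraphs of the abstract graph $W$, is exactly the intended double-counting argument, and your key observation --- that the inner product (hence $C$ of it) depends only on the type $W$ together with the two distinguished vertex subsets --- is precisely the remark the author flags just before the lemma. Two small points you handle implicitly but correctly: the inner sum over pairs $\Delta',\Omega'\subset W$ is independent of the chosen isomorphism $\Gamma_W\to W$ because any two such isomorphisms differ by an automorphism of $W$, which permutes the covering pairs and preserves the summand; and the right-hand side as printed in the thesis omits the application of $C$ to the fraction, which is evidently a typographical slip that your reading (applying $C$ on both sides) repairs.
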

\subsection{Results}

Lemma \ref{lemaG} together with Proposition \ref{propG} give us a new tool to handle numbers of induced subgraphs of a given SRG with respect to each other.\par
In \cite{geometry}, the set $X$ is chosen as the entire vertex set of a SRG. The members of the set $Y$ are representatives of edges in a SRG.	Hence, in this case it is necessary to know all possibilities for subgraphs that can be induced by two vertices, by a vertex-edge pair and by two edges in an SRG. Numbers of all these substructures can be expressed by $k$, $\lambda$, $\mu$ and the number of induced $K_4$ in a SRG. By setting $t=4$, the Proposition \ref{propG} gives for these $X$ and $Y$ the lower bound of the number of induced $K_4$ in an SRG.\par
In the triangle-free case, the lowest order such that the numbers $P_G$s are not determined uniquely by parameters $k$ and $\mu$ is $6$. However, they are determined by parameters of a tfSRG and by the number of its induced $K_{3,3}$. Therefore, if we chose $Y$ as the set of representatives of induced $K_{1,2}$ and $X$ as the set of representatives of some induced subgraph of order at most $2$, we obtain a bound on the number of induced $K_{3,3}$ in a tfSRG. In most cases the best results were obtained for $X$ containing representatives of edges in a tfSRG. In all cases that we tested the inequality gave a lower bound of induced $K_{3,3}$ for even values of $t$ and an upper bound for the odd values of $t$. The lower bounds are all negative and therefore they do not give any new result. On the other hand, odd values of $t$ lead for some tfSRGs to new upper bounds for their induced $K_{3,3}$. In the case of Krein graphs $Kr(r)$ the upper bound is exactly the same as the one coming from the statement of the Proposition \ref{krein}, which we state as a lemma.

\begin{lema} \label{lemaKR}
Let us consider Krein graph $Kr(r)$. The number of induced $K_{3,3}$ in this $SRG$ is equal to 
$\frac{1}{2}P_{\overline{K}_3}{r\choose {3}}$
\end{lema}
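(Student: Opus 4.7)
The plan is to count induced $K_{3,3}$ by a simple double-counting argument anchored on Lemma \ref{krein}. Recall that in a Krein graph $Kr(r)$, every triple of vertices inducing $\overline{K}_3$ has exactly $r$ common neighbours. First I would fix such an independent triple $T=\{a,b,c\}$ and let $N(T)$ denote its set of common neighbours, so $|N(T)|=r$.

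Next I would observe that $N(T)$ is itself an independent set. Indeed, since $Kr(r)$ is triangle-free, any two adjacent vertices of $N(T)$ together with $a$ would form a triangle, a contradiction. Consequently the bipartite graph on $T\cup N(T)$ is in fact an induced $K_{3,r}$: there are no edges within $T$ (by assumption), no edges within $N(T)$ (by the previous sentence), and all $3r$ cross-edges are present by definition of $N(T)$. In particular, for every 3-element subset $S\subseteq N(T)$ the set $T\cup S$ induces a $K_{3,3}$.

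Now I would set up the double count. Consider pairs $(T,S)$ where $T$ is an independent triple of $\Gamma$ and $S$ is a 3-subset of $N(T)$. The number of such pairs equals $P_{\overline{K}_3}\cdot\binom{r}{3}$, by Lemma \ref{krein}. On the other hand, each induced $K_{3,3}$ subgraph of $\Gamma$ has exactly two colour classes, and each class is an independent triple whose common neighbours include the other class; hence every $K_{3,3}$ corresponds to exactly two pairs $(T,S)$ in the enumeration above. Dividing by $2$ yields
\begin{equation*}
P_{K_{3,3}}(\Gamma)=\tfrac{1}{2}\,P_{\overline{K}_3}\binom{r}{3},
\end{equation*}
which is the stated formula.

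The only subtlety I foresee is the justification that $N(T)$ really consists of pairwise non-adjacent vertices and that it intersects $T$ trivially; both are immediate consequences of the triangle-free condition $\lambda=0$, but they must be stated explicitly to guarantee that the pair $(T,S)$ genuinely produces an \emph{induced} $K_{3,3}$ rather than a $K_{3,3}$ with additional chords. Everything else is routine counting.
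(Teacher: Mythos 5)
Your proof is correct: the independence of $N(T)$ follows from $\lambda=0$, the pairs $(T,S)$ number $P_{\overline{K}_3}\binom{r}{3}$ by Lemma \ref{krein}, and each induced $K_{3,3}$ is counted exactly twice because its two colour classes are its only independent triples. The paper states Lemma \ref{lemaKR} without proof, as an immediate consequence of Lemma \ref{krein}; your double-counting argument is precisely the justification left implicit there.
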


All results for tfSRGs are summarized in Table \ref{results-tfSRG}. The first column of the table represents the parameter set of the considered tfSRG. The second column belongs to the upper bound coming from the solution of the system of linear equations obtained by our algorithm for triangle-free case. The heading row describes the choice of $X$, $Y$ and $t$. The illustration of the results for even $t$ can be found in the last column, where the lower bound is negative.\\
The first four parameter sets represented in the table are Krein graphs. One sees that the results obtained geometrical approach are better for those graphs, where the Krein inequality is tighter.\\

\begin{table}[h!]
\centering
\footnotesize{
\begin{tabular}{c c c|| r|| l l l| l l l| l l l}
\multicolumn{2}{ r }{tfSRG} &		& 				& $X$	 & $Y$		 &	$t$ &$X$	 & $Y$		 &	$t$ &$X$	 & $Y$		 &	$t$ \\ 
$n$ & $k$ & $\mu$ & lin. equations & $K_{1}$ & $K_{1,2}$ & $5$ & $K_{1,1}$ & $K_{1,2}$ & $5$ & $K_{1,1}$ & $K_{1,2}$ & $6$\\
\hline 
\hline
100&	22&	6& 51 333.33&  \multicolumn{3}{ |r| }{0.00}&\multicolumn{3}{ |r| }{0.00}&	\multicolumn{3}{ |r }{-2 189.08}\\
\hline 
324& 57& 12&	15 800 400.00&	\multicolumn{3}{ |r| }{1 580 040.00}&\multicolumn{3}{ |r| }{1 580 040.00}&\multicolumn{3}{ |r }{-717 648.25}\\
\hline
784& 116&	20&	894 206 880.00&\multicolumn{3}{ |r| }{99 356 320.00}&	\multicolumn{3}{ |r| }{99 356 320.00}&	\multicolumn{3}{ |r }{-171 378 969.69}\\
\hline 
1600& 205& 30& 21 129 322 667.00&\multicolumn{3}{ |r| }{2 263 856 000.00}& \multicolumn{3}{ |r| }{2 263 856 000.00}&	\multicolumn{3}{ |r }{-384 925 3316.17}\\
\hline
77&		16&	4&	3 080.00&		\multicolumn{3}{ |r| }{534.63}&			\multicolumn{3}{ |r| }{552.27}&	\multicolumn{3}{ |r }{	-378.11}\\
\hline
162&	21&	3&	1 890.00&		\multicolumn{3}{ |r| }{124 452.56}&	\multicolumn{3}{ |r| }{	42 409.60}&		\multicolumn{3}{ |r }{-36 672.31}\\
\hline
176&	25&	4&	17 600.00&		\multicolumn{3}{ |r| }{218 854.15}&		\multicolumn{3}{ |r| }{74 550.28}&		\multicolumn{3}{ |r }{-60 480.70}\\
\hline
210&	33&	6&	246 400.00&	\multicolumn{3}{ |r| }{524 614.17}&		\multicolumn{3}{ |r| }{210 718.23}&	\multicolumn{3}{ |r }{-136 110.87}\\
\hline
266&	45&	9&	2 867 480.00&	\multicolumn{3}{ |r| }{1 186 988.94}&		\multicolumn{3}{ |r| }{730 655.25}&	\multicolumn{3}{ |r }{-350 790.83}\\
\hline
352&	36&	4&	73 920.00&		\multicolumn{3}{ |r| }{20 793 353.15}&	\multicolumn{3}{ |r| }{1 851 582.50}&	\multicolumn{3}{ |r }{-2 252 418.82}\\
\hline
392&	46&	6&	901 600.00&	\multicolumn{3}{ |r| }{46 206 927.87}&	\multicolumn{3}{ |r| }{3 943 905.78}&	\multicolumn{3}{ |r }{-4 572 373.08}\\
\hline
552&	76&	12&	48 070 000.00&	\multicolumn{3}{ |r| }{202 641 567.42}&	\multicolumn{3}{ |r| }{24 938 590.19}&	\multicolumn{3}{ |r }{-25 958 718.19}\\
\hline
638&	49&	4&	250 096.00&	\multicolumn{3}{ |r| }{546 336 456.15}&		\multicolumn{3}{ |r| }{17 861 596.71}&	\multicolumn{3}{ |r }{-35 169 414.63}\\
\hline
650&	55&	5&	965 250.00&	\multicolumn{3}{ |r| }{78 212 8175.56}&		\multicolumn{3}{ |r| }{22 335 864.64}&	\multicolumn{3}{ |r }{-46 682 434.69}\\
\hline
667&	96&	16&	248 390 800.00&	\multicolumn{3}{ |r| }{286 405 500.67}&	\multicolumn{3}{ |r| }{57 611 096.19}&	\multicolumn{3}{ |r }{-70 157 307.76}\\
\hline
800&	85&	10&	45 696 000.00&	\multicolumn{3}{ |r| }{3 196 736 243.94}&		\multicolumn{3}{ |r| }{70 873 609.74}&	\multicolumn{3}{ |r }{-153 636 540.68}\\
\hline
1073&	64&	4&	721 056.00&	\multicolumn{3}{ |r| }{7 483 465 849.76}&		\multicolumn{3}{ |r| }{128 237 091.68}&	\multicolumn{3}{ |r }{-331 449 507.48}\\
\end{tabular}
}
\caption{Bounds for induced $K_{3,3}$ in $tfSRGs$ on up to $1100$ vertices}
\label{results-tfSRG}
\end{table} 

We have tried to obtain new bounds also for the number of induced Petersen graphs in the missing Moore graph by this method. The upper bound following from our algorithm equals $266$ $266$ $000$ and the lower bound is $0$. The bounds obtained from the geometrical approach are for all cases which we tested much worse. Some of them are presented in the Table \ref{bound-Moore}. The set $Y$ consists of all representatives for induced $C_5$ in $srg(3250,57,0,1)$. The notation $P4$ in the table is used for a path of length $4$. Even values of $t$ give the lower bound and odd values the upper bound for induced Petersen graphs.

\begin{table}[h!]
\centering
\footnotesize{
\begin{tabular}{r|| r| r| r}
		&\multicolumn{1}{|c|}{$X:$  $K_1$}&\multicolumn{1}{|c|}{$K_{1,1}$}&\multicolumn{1}{|c}{$P4$}\\
\hline\hline
$t=5$	& 22 694 158 422 336.21		& 4 835 831 221 300.89			& 2 403 005 672 027.93				\\
\hline
$6$		&-18 674 394 364 791.40		&-4 326 814 304 388.56			&-20 580 486 113 004.44 	\\
\hline
$7$		& 27 779 257 927 613.66		& 8 370 869 868 988.79			& 3 331 657 191 483.46 		\\
\end{tabular}
}
\caption{Bounds for induced Petersen graphs in $srg(3250,57,0,1)$}
\label{bound-Moore}
\end{table}

\newpage

\section[Automorphism group(s) of the missing Moore graph(s)]{\texorpdfstring{Automorphism group(s) of the missing\\ Moore graph(s)}{Automorphism group(s) of the missing Moore graph(s)}}\label{sec:Moore}

The family of Moore graphs is a special class of $tfSRGs$ with $\mu=1$. According to \cite{HS}, there are exactly four feasible parameter sets for such graph. These are namely $(5,2,0,1)$ (pentagon), $(10,3,0,1)$ (Petersen graph), $(50,7,0,1)$ (Hoffman-Singleton graph) and $(3250,57,0,1)$. The proof of the feasibility of parameters is analogous to \ref{prop:param}. Hoffman and Singleton proved uniqueness of graphs with the first three parameter sets. The existence of the fourth case is a famous open problem in algebraic graph theory. For the rest of this section let $\Gamma$ denote a $srg(3250,57,0,1)$.
\par The automorphism group of $\Gamma$ is well studied. In 1971 Aschbacher proved that $\Gamma$ is not rank three graph \cite{Asb} and Higman showed in his unpublished notes that $\Gamma$ cannot be vertex transitive (for the proof see Cameron monograph \cite{Perm}). Later in 2001, Makhnev and Paduchikh showed that if $\Gamma$ has an involutive automorphism then $|Aut(\Gamma)|\leq 550$ \cite{MakPad}. Finally in 2009 Ma\v caj and \v{S}ir\'{a}\v{n} proved that if $|Aut(\Gamma)|$ is odd then $|Aut(\Gamma)|\leq 275$ and $|Aut(\Gamma)|\leq 110$ otherwise \cite{MacSir}.
\par We apply our results about small subgraphs in tfSRG to give a new information about automorphism of $\Gamma$ of order $7$. We begin by introducing relevant notions and known results.

\begin{prop}\cite{Asb}
Let $X$ be a group of automorphisms of $\Gamma$ and let $Fix(X)$ denote the subgraph induced by the set of all fixed points. Then $Fix(X)$ is the empty graph, an isolated vertex, a pentagon, the Petersen graph, the Hoffman-Singleton graph, or a star $K_{1,m}$ for some $m>0$.
\end{prop}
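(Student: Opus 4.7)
The plan is to exploit $\lambda=0$ and $\mu=1$ to confine $F:=\mathrm{Fix}(X)$ to a very short list of possibilities. I assume $X\neq\{1\}$; the trivial case would give $F=\Gamma$, implicit in the statement. The crucial observation is a closure property: if $u,v\in F$ are non-adjacent, then by $\mu=1$ they share a unique common neighbor $w\in V(\Gamma)$, and since $X$ permutes the singleton $\{w\}$, one obtains $w\in F$. Combined with inherited triangle-freeness, this says that $F$ is a triangle-free induced subgraph of $\Gamma$ in which every pair of non-adjacent vertices has exactly one common neighbor \emph{inside} $F$.

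I would dispose of the cases $|F|\le 1$ immediately. For $|F|\ge 2$, the closure property forbids isolated vertices, so $F$ is connected of diameter at most two. The structural heart of the proof is then a dichotomy: either $F$ is a star $K_{1,m}$ or $F$ is regular. To establish it, I fix any $v\in F$ with $N_F(v)=\{u_1,\ldots,u_d\}$; triangle-freeness forces the $u_i$ to be pairwise non-adjacent with unique common neighbor $v$. For any pair $u_i,u_j$ I would define $\varphi_{ij}\colon N_F(u_i)\setminus\{v\}\to N_F(u_j)\setminus\{v\}$ by sending $x\sim u_i$ to the unique common neighbor of $u_j$ and $x$. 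A coincidence $\varphi_{ij}(x)=\varphi_{ij}(x')=y$ would give $y$ and $u_i$ as two common neighbors of the non-adjacent pair $x,x'$, violating $\mu=1$; hence $\varphi_{ij}$ is injective, and by symmetry $\deg u_i=\deg u_j$. Thus in $F$ all neighbors of any given vertex share a common degree.

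Iterating this property along BFS levels starting from $v$ shows that $F$ has at most two distinct degrees $d_1,d_2$, with all edges running between the two degree classes whenever $d_1\neq d_2$. If $d_1\neq d_2$, then $F$ is bipartite; but a non-edge between the two parts would have no common neighbor at all (its common neighbor would have to lie in both parts), contradicting $\mu=1$, so $F$ is complete bipartite. Triangle-freeness then forces one part to have size one, hence $F\simeq K_{1,m}$. If $d_1=d_2=d$, then $F$ is a $d$-regular triangle-free graph with $\mu=1$, i.e., an $srg(|F|,d,0,1)$; by the Hoffman-Singleton theorem cited in the introduction, $d\in\{1,2,3,7,57\}$ and $F$ is $K_2$, the pentagon, the Petersen graph, the Hoffman-Singleton graph, or $\Gamma$ itself, the last being excluded under $X\neq\{1\}$.

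The main obstacle will be the injection argument that anchors the neighbor-degree equality; the subsequent bipartite analysis and the invocation of the Hoffman-Singleton classification are short steps, with the latter taken as a black box.
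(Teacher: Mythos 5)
The paper does not prove this proposition; it is quoted from Aschbacher's paper, so there is no internal proof to compare against. Your argument is the standard one and is essentially correct: the closure observation (the \emph{unique} common neighbour of two non-adjacent fixed vertices is itself fixed, since $X$ permutes that singleton), the injection $\varphi_{ij}$ forcing all neighbours of a fixed vertex to share a degree, the two-level BFS argument giving at most two degree classes, and the reduction of the regular case to the Hoffman--Singleton classification are all sound, as is the exclusion of $d=57$ when $X\neq\{1\}$. One step is misattributed: once you know $F$ is complete bipartite in the irregular case, it is not triangle-freeness that forces one part to be a singleton ($K_{2,3}$ is triangle-free), but the condition $\mu=1$ once more --- if both parts had size at least $2$, two non-adjacent vertices in the same part would have at least two common neighbours inside $F$. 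With that one-line correction the proof is complete.
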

Since $\Gamma$ has diameter $2$, the image $v^x$ of any vertex $v$ under the automorphism $x$ is either itself or a neighbour of $v$, or a vertex at the distance two from $v$. Let $a_i(x)=|\{v\in \Gamma;d(v,v^x)=i\}|$ where $d$ denotes the distance between two vertices in $\Gamma$ and $i\in\{1,2,3\}$.
\newpage

\begin{prop}\cite{MacSir}\label{tabulka}
Let x be an automorphism of graph $\Gamma$ of a prime order $p$. Then the value $a_1(x)$ satisfies
\begin{table}[h!]
\centering
\begin{tabular}{| p{4cm} | p{4cm} | p{4cm} |}
\hline
	$a_0(x)$	&	$p$	&  $a_1(x)$ \\
\hline
	$0$		&	$5$		& $50+75k\leq 500$\\
	$0$		&	$13$	& $65+195k\leq 500$\\
	$1$		&	$3$		& $27+45k= 0$\\
	$1$		&	$19$	& $57+285k\leq 500$\\
	$5$		&	$5$		& $10+75k\leq 500$\\
	$5$		&	$11$	& $55+165k\leq 500$\\
	$10$	&	$3$		& $0$\\
	$50$	&	$5$		& $25+75k\leq 350$\\
	$56$	&	$2$		& $112$\\
	$2$		&	$7$		& $49+105k\leq 500$\\
	$9$		&	$7$		& $98+105k\leq 500$\\
	$16$	&	$7$		& $42+105k\leq 500$\\
	$23$	&	$7$		& $91+105k\leq 500$\\
	$30$	&	$7$		& $35+105k\leq 500$\\
	$37$	&	$7$		& $84+105k\leq 392$\\
	$44$	&	$7$		& $28+105k\leq 260$\\
	$51$	&	$7$		& $77$\\
\hline
\end{tabular}
\end{table}
\end{prop}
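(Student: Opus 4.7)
The plan is to combine a character-theoretic analysis of the action of $x$ on the eigenspaces of $\Gamma$ with the structural classification of $\mathrm{Fix}(x)$ from the preceding proposition.

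First, I would record the relevant spectral invariants. Since $\Gamma=srg(3250,57,0,1)$ has eigenvalues $57,7,-8$ with multiplicities $1,1729,1520$, the permutation matrix $P_x$ commutes with the adjacency matrix $A$ and so preserves each of the three eigenspaces $V_0,V_1,V_2$. Writing $\chi_j=\mathrm{tr}(P_x|_{V_j})$, one has $\chi_0=1$, and by computing $\mathrm{tr}(P_x)$ and $\mathrm{tr}(AP_x)$ both directly (as $a_0$ and $a_1$) and via the spectral decomposition one obtains
\begin{align*}
a_0-1 &= \chi_1+\chi_2,\\
a_1-57 &= 7\chi_1-8\chi_2.
\end{align*}
Eliminating $\chi_2$ yields the master formula $a_1 = 65-8\,a_0+15\,\chi_1$, which explains why every value in the third column of the table is a linear function of $a_0$ with slope $15$ in the remaining parameter.

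Second, I would constrain $\chi_1$ modulo $p$. Because $r=7$ and $s=-8$ are integers, the eigenspaces $V_1,V_2$ carry $\mathbb{Q}$-rational representations of $\langle x\rangle$; for $x$ of prime order $p$ the eigenvalues of $P_x|_{V_1}$ are $p$-th roots of unity, and Galois invariance forces all primitive $p$-th roots to occur with the same multiplicity $m'$. Letting $m_0$ be the multiplicity of $1$, one has $m_0+(p-1)m'=1729$ and $\chi_1=m_0-m'=1729-pm'$, so $\chi_1\equiv 1729\pmod p$, and analogously $\chi_2\equiv 1520\pmod p$. Combined with $\chi_1+\chi_2=a_0-1$, these congruences fix $\chi_1$ modulo $p$ whenever compatible, so that the master formula specialises to an arithmetic progression $a_1 = a_1^{(0)} + 15p\cdot k$, which is exactly the shape $50+75k$, $49+105k$, $65+195k$, etc.\ appearing in the table.

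Third, I would eliminate incompatible pairs $(a_0,p)$ using the structural classification of $\mathrm{Fix}(x)$. The empty, pentagon, Petersen and Hoffman--Singleton cases force $a_0\in\{0,5,10,50\}$; an isolated fixed vertex gives $a_0=1$; and $\mathrm{Fix}(x)\simeq K_{1,m}$ yields $a_0=m+1$ with the extra divisibility $p\mid 57-m$, coming from the action of $x$ on the neighbourhood of the centre (all $57-m$ non-fixed neighbours split into $p$-orbits, and within the neighbourhood no two non-fixed neighbours can be adjacent by triangle-freeness). The divisibility $p\mid 3250-a_0$ obtained from the action on all of $V_\Gamma$ cuts the list further, and what remains is precisely the set of rows listed in the table. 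Substituting each admissible $(a_0,p)$ into the congruence for $\chi_1$ and taking the smallest non-negative value gives the starting term of the progression.

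Finally, the main obstacle is the upper bounds $\leq 500$ (and the sharper $\leq 392,\leq 350,\leq 260$ in the last three rows, as well as the unique values $112, 0, 77$). The congruences alone only restrict $\chi_1$ to an infinite arithmetic progression; to bound $k$ one needs a second round of counting. I would obtain these bounds by analysing how $x$ acts on the $p$-cycles of $\Gamma$: an orbit of size $p$ contributing to $a_1$ is essentially a closed $p$-walk of $\Gamma$, and the number of such orbits through a fixed vertex is controlled by the Moore condition $\mu=1$ together with the spectrum. In the star cases (for example $a_0=51,\,p=7$), the $57$ neighbours of the centre split into $m$ fixed vertices and orbits of size $p$, and a direct bipartite analysis between these non-fixed neighbours, their $p$-orbits in $N_2$, and the constraint $\mu=1$ pins down the unique value of $a_1$; in the small-$a_0$ cases a more global count is required. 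This orbit-counting step, rather than the character arithmetic, is where the real work of \cite{MacSir} lies and where the sharpness of the proposition is determined.
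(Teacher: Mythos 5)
The paper offers no proof of Proposition \ref{tabulka} --- it is imported verbatim from \cite{MacSir} --- so there is no in-paper argument to measure you against; I can only assess your reconstruction on its own terms. The spectral bookkeeping in your first two steps is correct and is indeed the mechanism behind the table: with multiplicities $1729$ and $1520$ one gets $a_1 = 65 - 8a_0 + 15\chi_1$ and $\chi_1 \equiv 1729 \pmod p$, and I checked that this reproduces every arithmetic progression listed (for instance $a_0=0$, $p=5$ gives $\chi_1\equiv 4\pmod 5$, hence $a_1 = 65+15(-1+5k)=50+75k$; $a_0=51$, $p=7$ gives $a_1\equiv 77 \pmod{105}$; $a_0=1$, $p=3$ gives the class of $27 \pmod{45}$). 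The compatibility condition between the two eigenspace congruences is exactly $p\mid 3250-a_0$, so that part of your step three is also forced rather than an extra hypothesis.

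The genuine gap is the inequalities, and you name it yourself without closing it. The proposition is not merely a list of congruence classes: the content that makes it usable downstream (the thesis intersects ``$a_1(x)$ is a multiple of $49$'' with these finite ranges to extract at most one value per row) is precisely the bounds $\leq 500$, the sharper $\leq 392$, $\leq 350$, $\leq 260$, and the exact values $0$, $112$, $77$. Your proposal replaces all of these with ``a second round of counting'' whose shape is only gestured at; no single bound is actually derived, so the table as stated is not proved. A secondary, smaller gap: you claim that the divisibilities $p\mid 3250-a_0$ and $p\mid 57-m$ leave ``precisely the rows of the table,'' but they do not --- for example they admit $p=2$ with $a_0\in\{0,10,50\}$, and excluding those requires the separate structural analysis of involutions (Makhnev--Paduchikh), not the arithmetic you set up. To make the argument complete you would need either to carry out one representative bound in full (the star case $a_0=51$, $p=7$, where you sketch a bipartite count that should pin $a_1=77$, would be the natural choice) or to state explicitly which external results of \cite{MacSir} and \cite{MakPad} you are invoking for the ranges.
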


\begin{lema}
If $x$ is an automorphism of a prime order $p$ of $\Gamma$, then any graph $G$ satisfies the following:

$$P_G\equiv |\{\Delta\subseteq\Gamma; \Delta\simeq G, \Delta^x=\Delta\}|\mod{p}.$$

\end{lema}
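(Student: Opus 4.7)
The plan is to exhibit an action of the cyclic group $\langle x\rangle$ on the set
$$S_G=\{\Delta\subseteq\Gamma:\Delta\simeq G\}$$
and then apply the orbit-stabilizer theorem. First I would verify that $x$ indeed acts on $S_G$: since $x$ is an automorphism of $\Gamma$, it sends induced subgraphs to induced subgraphs, and moreover $\Delta^x\simeq\Delta\simeq G$, so $\Delta^x\in S_G$ whenever $\Delta\in S_G$. Thus $\langle x\rangle$ permutes $S_G$, and $|S_G|=P_G$.

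Next, because $|\langle x\rangle|=p$ is prime, every orbit of this action has size dividing $p$, hence size either $1$ or $p$. The orbits of size $1$ are precisely the induced subgraphs fixed setwise by $x$, that is, the elements of the set
$$F_G=\{\Delta\subseteq\Gamma:\Delta\simeq G,\ \Delta^x=\Delta\}.$$
Partitioning $S_G$ into its orbits gives
$$P_G=|S_G|=|F_G|+p\cdot N,$$
where $N$ counts the orbits of size $p$. Reducing modulo $p$ yields the claimed congruence $P_G\equiv |F_G|\pmod{p}$.

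There is essentially no obstacle here: the argument is the standard fixed-point counting lemma for an action of a group of prime order, and the only content specific to the setting is the remark that automorphisms of $\Gamma$ preserve the property of being an induced subgraph isomorphic to a fixed graph $G$. Note that $\Delta^x=\Delta$ is taken in the setwise sense (the vertex set of $\Delta$ is preserved, but individual vertices need not be fixed); this is the correct notion since $x$ restricted to $V_\Delta$ is automatically an automorphism of $\Delta$ as soon as $V_\Delta^x=V_\Delta$, so $\Delta^x=\Delta$ as induced subgraphs.
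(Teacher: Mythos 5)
Your proposal is correct and follows essentially the same route as the paper: both decompose the set of induced copies of $G$ into orbits under $\langle x\rangle$, observe that each orbit has size $1$ or $p$, and reduce modulo $p$. Your explicit remark that $\Delta^x=\Delta$ is meant setwise is a useful clarification that the paper leaves implicit.
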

\begin{proof}
If $\Delta\simeq G$ is an induced subgraph of $\Gamma$ then $\Delta^x$ is an induced subgraph of $\Gamma$ isomorphic to $G$ as well. Hence, the set $O_x(\Delta)=\{\Delta^{x^i};i\in\{0,1,\dots,(p-1)\}\}$ consists either of $p$ induced subgraphs of $\Gamma$ isomorphic to $G$, or it contains only $G$. This way we decompose the set of subgraphs of $\Gamma$ isomorphic to $G$ into several $p$-tuples and individuals. Hence, if the number of occurrences of $G$ in $\Gamma$ is congruent to $a \mod{p}$ where $a\in\{0,1,\dots,p\}$, then the number of induced subgraphs of $\Gamma$ isomorphic to $G$ that are fixed by automorphism $x$ equals $a \mod{p}$.
\end{proof}

\subsection{Automorphisms of order $7$}
The following result follows from the output of our algorithm.

\begin{prop}
The number of induced copies of any $7$-vertex graph except $\overline{K_7}$ in $\Gamma$ is a multiple of $7$. The number of induced $\overline{K_7}$ in $\Gamma$ is equal to $2 \mod{7}$.
\end{prop}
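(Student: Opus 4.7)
The plan is to rely on the proposition, established earlier in Section~3, that for $\Gamma=srg(3250,57,0,1)$ the value $P_G(\Gamma)$ is uniquely determined by the parameters for every graph $G$ on at most $9$ vertices. Consequently, each $P_G(\Gamma)$ with $|V(G)|=7$ is an explicit integer returned by the algorithm; and, because $\Gamma$ has girth $5$ (forced by $\lambda=0$, $\mu=1$), only the $48$ triangle-free, $C_4$-free graphs of order $7$ (the last column of Table~\ref{poctygrafov}) can possibly appear as induced subgraphs, and so the proof reduces to determining the residue modulo $7$ of these $48$ integers.

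A decisive shortcut comes from the global identity
\[
\sum_{G} P_G(\Gamma) = \binom{3250}{7}.
\]
Writing the base-$7$ expansions $3250 = (1,2,3,2,2)_7$ and $7=(1,0)_7$, Lucas' theorem yields
\[
\binom{3250}{7}\equiv\binom{1}{0}\binom{2}{0}\binom{3}{0}\binom{2}{1}\binom{2}{0}\equiv 2\pmod 7.
\]
Therefore, once we verify that $P_G(\Gamma)\equiv 0\pmod 7$ for each of the $47$ graphs different from $\overline{K_7}$, the remaining value is forced to be $\equiv 2\pmod 7$, which matches the claim. The singling out of $\overline{K_7}$ is natural here because, in the algorithm's back-substitution, the empty graph is the one determined last, via subtraction from $\binom{n}{7}$.

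For the $47$ divisibility checks I would exploit the arithmetic coincidence $k-1=56=7\cdot 8$: many of the closed-form outputs of the algorithm contain an explicit factor involving $k-1$, $k-2$, or $\binom{k}{j}$ with $j\ge 1$, each of which is divisible by $7$ since $k\equiv 1\pmod 7$. The remaining cases are most efficiently handled by rerunning the back-substitution that solves $Mx=b$ entirely in $\mathbb{F}_7$: the integer coefficient matrix $M$ produced by the algorithm is independent of the SRG, and reducing the right-hand side using $n\equiv 2$, $k\equiv 1$, $\lambda\equiv 0$, $\mu\equiv 1\pmod 7$ yields each residue directly. The Lucas sanity check then ensures that we need only witness the $47$ zero residues, and the residue $2$ on $\overline{K_7}$ is automatic.

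The main obstacle is bookkeeping rather than a conceptual subtlety: according to the proposition preceding the theorem, the $o=7$ system has $301$ equations in $107$ variables (and in the Moore-graph specialisation the counts are comparable), so the $47$ verifications are impractical by hand and must be carried out by the same software that implements the algorithm of Section~\ref{sec-alg}. A purely structural argument, explaining the distinguished role of $\overline{K_7}$ without a case-by-case check, is conceivable through the Bose--Mesner algebra, but none is apparent; the realistic route is a computer verification underpinned by the Lucas reduction.
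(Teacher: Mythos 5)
Your route is essentially the paper's: the proposition is recorded there simply as a consequence of the algorithm's output, i.e.\ a computer verification of the residues of the (uniquely determined) values $P_G$ for the $48$ candidate $(C_3,C_4)$-free graphs of order $7$, and your plan is the same verification carried out in $\mathbb{F}_7$. What you add is the Lucas-theorem observation that $\binom{3250}{7}\equiv 2\pmod 7$ together with the identity $\sum_G P_G=\binom{n}{7}$, which explains \emph{why} the residue $2$ must land on exactly one graph once the other $47$ are shown to vanish mod $7$, and doubles as a consistency check on the computation; that is a genuine, if small, improvement in transparency over the paper's bare assertion. One factual slip to fix: your list of ``automatically divisible'' factors is partly wrong, since $k-2=55\equiv 6$ and $\binom{k}{1}=57\equiv 1\pmod 7$ (only $k-1=56$ and $\binom{57}{j}$ for $2\le j\le 6$ actually vanish mod $7$); this does not damage the argument because your fallback of reducing the whole back-substitution mod $7$ is sound, but the heuristic shortcut as stated would mislead.
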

\begin{cor}
Let $x$ be an automorphism of order $7$ of $\Gamma$. 
\begin{itemize}
\item Let $G$ be a graph on $7$ vertices not isomorphic to $\overline{K_7}$. Then the number of induced subgraphs of $\Gamma$ isomorphic to $G$ and fixed by $x$ is congruent to $0\mod{7}$.
\item The number of induced subgraph $G$ in $\Gamma$ isomorphic to $\overline{K_7}$ and fixed by $x$ is congruent to $2\mod{7}$.
\end{itemize}
\end{cor}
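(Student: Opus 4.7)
The corollary is essentially a direct consequence of the two preceding results, so the plan is simply to combine them with $p=7$. The key observation is that we already possess a congruence for $P_G \bmod 7$ from the preceding proposition (namely, $P_G \equiv 0 \pmod 7$ for every $7$-vertex graph $G \not\simeq \overline{K_7}$, and $P_{\overline{K_7}} \equiv 2 \pmod 7$), together with a general congruence $P_G \equiv |\{\Delta \subseteq \Gamma : \Delta \simeq G,\ \Delta^x = \Delta\}| \pmod p$ from the lemma valid for every automorphism $x$ of prime order $p$.

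The plan is therefore to specialize the lemma to the automorphism $x$ of prime order $p = 7$ and transitively combine the two congruences. First I would fix an arbitrary $7$-vertex graph $G$ and an automorphism $x$ of $\Gamma$ of order $7$. Applying the lemma with $p = 7$ gives
\begin{equation*}
P_G \equiv |\{\Delta \subseteq \Gamma : \Delta \simeq G,\ \Delta^x = \Delta\}| \pmod{7}.
\end{equation*}
Then I would invoke the preceding proposition, which determines the left-hand side modulo $7$ in both cases. For $G \not\simeq \overline{K_7}$ the proposition gives $P_G \equiv 0 \pmod{7}$, so the number of fixed induced copies of $G$ is $\equiv 0 \pmod{7}$; for $G \simeq \overline{K_7}$ the proposition gives $P_G \equiv 2 \pmod 7$, whence the number of fixed induced copies of $\overline{K_7}$ is $\equiv 2 \pmod{7}$.

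There is really no obstacle here beyond invoking the two statements in the correct order; the entire content of the corollary is already carried by the preceding proposition (for the arithmetic) and the preceding lemma (for the passage to subgraphs fixed by $x$). The only care needed is to verify that the hypotheses of the lemma apply, i.e.\ that $7$ is prime, and that the set in question is literally the set of $x$-invariant induced copies of $G$ — both immediate. Consequently the argument is a one-line combination and I would present it as such.
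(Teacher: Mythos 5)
Your proposal is correct and matches the paper's intent exactly: the corollary is stated without proof precisely because it is the immediate combination of the lemma (with $p=7$) and the preceding proposition, in the order you describe. Nothing further is needed.
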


\begin{lema}\label{lemmaCK}
Let $x$ be an automorphism of $\Gamma$ of order $7$ and let $v\notin Fix(x)$ be a vertex in this graph. The subgraph induced by the set $O_x(v):=\{v^{x^i};i\in\{0,1,\dots,6\}\}$ is either $\overline{K_7}$ or $C_7$.
\end{lema}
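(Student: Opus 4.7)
The plan is to exploit the cyclic action of $\langle x\rangle$ on the orbit $O_x(v)$ to reduce the induced subgraph to a circulant on $\mathbb{Z}/7\mathbb{Z}$, and then to rule out the options which contain a triangle.

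First, I would note that since $x$ has prime order $7$ and $v\notin Fix(x)$, the orbit $O_x(v)$ has cardinality $7$ by the orbit--stabilizer theorem. Writing $v_i=v^{x^i}$ for $i\in\mathbb{Z}/7\mathbb{Z}$, the identification $i\mapsto v_i$ gives a free transitive action of $\mathbb{Z}/7\mathbb{Z}$ on $O_x(v)$. Because $x$ is a graph automorphism, adjacency of $v_i$ and $v_j$ in $\Gamma$ depends only on $j-i\pmod 7$. Hence the induced subgraph $H$ on $O_x(v)$ is a Cayley graph $\mathrm{Cay}(\mathbb{Z}/7\mathbb{Z},S)$ for some symmetric connection set $S\subseteq\{\pm 1,\pm 2,\pm 3\}$.

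Next, I would enumerate the possibilities for $S$. Since $S=-S$, it is determined by a subset of $\{1,2,3\}$, giving four options up to isomorphism: $S=\emptyset$ yields $\overline{K_7}$; $|S|=2$ (i.e.\ $S\in\{\{\pm 1\},\{\pm 2\},\{\pm 3\}\}$) yields $C_7$, since each step generates $\mathbb{Z}/7\mathbb{Z}$; $|S|=4$ yields the complement of $C_7$; and $|S|=6$ yields $K_7$. Now I would invoke the hypothesis $\lambda=0$, which forces $H$ to be triangle-free. Clearly $K_7$ has triangles, so it is excluded. For the $|S|=4$ case, I would exhibit a triangle in each representative: e.g.\ for $S=\{\pm 1,\pm 2\}$ the vertices $v_0,v_1,v_2$ form a triangle since $1,2,2-1=1\in S$; for $S=\{\pm 1,\pm 3\}$ the vertices $v_0,v_3,v_{-1}$ work, using $3,-1,-1-3=3\in S$; for $S=\{\pm 2,\pm 3\}$ the vertices $v_0,v_2,v_{-3}$ work since $2,-3,-3-2=2\in S$.

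After eliminating $K_7$ and all three $4$-regular circulants, only $\overline{K_7}$ (from $S=\emptyset$) and $C_7$ (from $|S|=2$) remain, which is what the lemma asserts. I do not expect a real obstacle here; the only mildly delicate step is the case analysis of the three $4$-regular circulants, which is routine once one has the Cayley-graph reduction.
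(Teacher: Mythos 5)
Your proof is correct. The underlying observation is the same one the paper uses -- since $x$ is an automorphism, adjacency between $v^{x^i}$ and $v^{x^j}$ depends only on $j-i \bmod 7$, so the induced subgraph is a circulant on $\mathbb{Z}/7\mathbb{Z}$ -- but you organize the elimination step differently. The paper argues directly: one edge $v\sim v^{x^i}$ propagates to the $7$-cycle $v,v^{x^i},v^{x^{2i}},\dots$, and any further edge is a chord of that cycle, creating a cycle of length $3$ or $4$; this uses both $\lambda=0$ and $\mu=1$ (girth $5$). You instead enumerate the four possible symmetric connection sets $S$ and kill $K_7$ and the three $4$-regular circulants by exhibiting explicit triangles, which uses only $\lambda=0$. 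Your explicit triangles all check out ($1,2,1$; $3,-1,3$; $2,-3,2$ are each contained in the respective $S$). The payoff of your route is that the lemma is seen to hold for orbits of order-$7$ automorphisms in any triangle-free graph, not just one of girth $5$; the paper's route is shorter but tied to the Moore-graph hypothesis. One small point worth making explicit in a final write-up: $|O_x(v)|=7$ because the stabilizer of $v$ in $\langle x\rangle$ is a proper subgroup of a group of prime order, hence trivial -- you state this via orbit--stabilizer, which is fine.
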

\begin{proof}
If $v$ has no neighbors in the set $O_x(v)$ then the graph induced by this set has to be $\overline{K_7}$. Let us suppose that $v$ and $v^{x^i}$ for $i\in\{1,2,\dots,6\}$ are neighbors ($v\sim v^{x^i}$). Therefore also $v^{x^i}\sim v^{x^{2i}}, v^{x^{2i}}\sim v^{x^{3i}},\dots,v^{x^{6i}}\sim v$, which forms a $C_7$. The vertex $v$ can not be joined to other vertices from the set $O_x(v)$ as there would be a triangle or a quadrangle in $\Gamma$.
\end{proof}

This way the automorphism $x$ decomposes vertices of $\Gamma$ into $(3250-a_0(x))/7$ sets, each representing a set $O_x(v)$ for some vertex $v\notin Fix(x)$. There are three possibilities how the $O_x(v)$ can induce $C_7$ subgraph (figure \ref{7orb}) and each of them occurs in the same number \cite{MacSir}. 
With this knowledge we are able to derive the number of $O_x(v)$ of the shape $C_7$ using only column $a_1(x)$ of the table in proposition \ref{tabulka}.

\begin{figure}[h!]
\begin{center}
\includegraphics[width=1\textwidth]{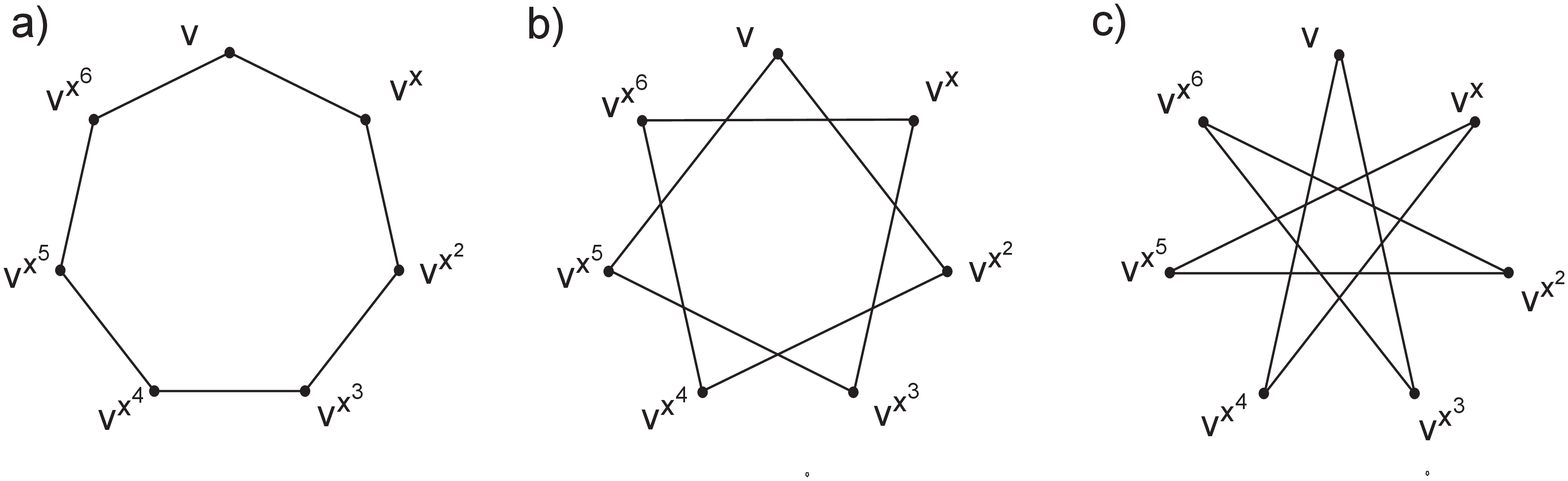}
\caption{}
\label{7orb}
\end{center}
\end{figure}
\newpage

\begin{lema}
Let $x$ be an automorphism of $\Gamma$ of order $7$. Then the number of $O_x(v)$ forming $C_7$ is $\frac{3a_1(x)}{7}$, which has to be a multiple of seven.
\end{lema}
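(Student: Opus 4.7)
The plan is to (a) classify $C_7$-orbits of $\langle x\rangle$ by a ``jump type'' and match this against $a_1(x)$, and (b) use the preceding Proposition together with the modular counting Lemma to get divisibility by $7$.

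Since $|x|=7$ is prime, every orbit of $\langle x\rangle$ on $V(\Gamma)$ has size $1$ or $7$. By Lemma \ref{lemmaCK}, a non-fixed orbit $O_x(v)=\{v,v^x,\ldots,v^{x^6}\}$ induces either $\overline{K_7}$ or $C_7$. When it induces $C_7$, the $2$-regular edge set is invariant under the $7$-cycle $x$ and so, identifying the orbit with $\mathbb{Z}/7$, has the form $\{\{i,i+j\}:i\in\mathbb{Z}/7\}$ for a unique $j\in\{1,2,3\}$ (since $j$ and $7-j$ give the same cycle). This recovers the three types of Figure \ref{7orb}; by the cited result of \cite{MacSir} each of them occurs the same number $k$ of times, so the total count of $C_7$-orbits equals $3k$.

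To compute $a_1(x)$, I would observe that in a jump-$j$ orbit $v\sim v^x$ holds iff $1\equiv\pm j\pmod 7$, hence only for $j=1$; and within a jump-$1$ orbit every vertex satisfies this by $x$-invariance, so such an orbit contributes exactly $7$ to $a_1(x)$ while jump-$2$ and jump-$3$ orbits contribute $0$. Fixed points trivially contribute $0$. Therefore $a_1(x)=7k$ and the number of $C_7$-orbits is $3k=\tfrac{3a_1(x)}{7}$.

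For the divisibility-by-$7$ claim, the preceding Proposition gives $P_{C_7}(\Gamma)\equiv 0\pmod 7$, so by the earlier Lemma on fixed subgraphs the number of induced $C_7$ in $\Gamma$ satisfying $\Delta^x=\Delta$ is also $\equiv 0\pmod 7$. Any such fixed $C_7$ either lies inside $\mathrm{Fix}(x)$, or is supported on a single $7$-orbit of $x$ and therefore on one of the $3k$ $C_7$-orbits above. Combining the structural list for $\mathrm{Fix}(x)$ from \cite{Asb} with the admissible values $a_0(x)\in\{2,9,16,23,30,37,44,51\}$ from Proposition \ref{tabulka} forces $\mathrm{Fix}(x)$ to be a star $K_{1,m}$ with $m+1\in\{2,9,16,23,30,37,44,51\}$: the pentagon, Petersen and Hoffman--Singleton cases are excluded by $a_0\notin\{5,10,50\}$, and the empty graph and single vertex by $a_0\geq 2$. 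Stars are acyclic, hence contain no induced $C_7$, so $3k\equiv 0\pmod 7$ as required. The main obstacle is this last case analysis, where one has to carefully cross-reference the graph-theoretic classification of $\mathrm{Fix}(x)$ with the numerical list of admissible $a_0(x)$ in order to eliminate precisely those configurations (Petersen, Hoffman--Singleton) whose induced $C_7$-counts would not vanish and would instead demand a separate mod-$7$ argument.
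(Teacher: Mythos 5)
Your proof is correct and follows exactly the argument the paper intends (the paper states this lemma without a written proof, but the surrounding discussion --- the three equally frequent $C_7$ orbit types from \cite{MacSir}, the jump-$1$ orbits being the only contributors to $a_1(x)$, and the mod-$7$ count of $x$-invariant induced $C_7$ combined with $\mathrm{Fix}(x)$ being a star --- is precisely what you have written out). One immaterial slip: the Petersen graph in fact contains no $7$-cycles at all, so its induced $C_7$ count does vanish; the exclusion of that case via $a_0(x)$ is still needed only for Hoffman--Singleton, but this does not affect your argument.
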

Now we are ready to update the table from proposition \ref{tabulka} in the following way.

\begin{veta}
Let x be an automorphism of $srg(3250, 57,0, 1)$ of order $7$. Then the value $a_1(x)$ is a multiple of $49$.
\begin{table}[h!]
\centering
\begin{tabular}{| p{3cm} | p{3cm} |}
\hline
	$a_0(x)$	&  $a_1(x)$ \\
\hline
	$2$			& $49$\\
	$9$			& $98$\\
	$16$		& $147$\\
	$23$		& $196$\\
	$30$		& $245$\\
	$37$		& $294$\\
	$44$		& $-$\\
	$51$		& $-$\\
\hline
\end{tabular}
\end{table}
\end{veta}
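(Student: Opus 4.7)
The plan is to combine the Lemma immediately preceding the theorem with the table in Proposition \ref{tabulka}. The Lemma asserts that the number of orbits $O_x(v)$ inducing a $C_7$ equals $3a_1(x)/7$, and that this count is a multiple of $7$. So $3a_1(x) \equiv 0 \pmod{49}$, and since $\gcd(3, 49) = 1$, I would immediately conclude $a_1(x) \equiv 0 \pmod{49}$.

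To obtain the table in the statement, I would intersect this divisibility condition with each row of the $p = 7$ portion of Proposition \ref{tabulka}. Each row confines $a_1(x)$ to an arithmetic progression with common difference $105 \equiv 7 \pmod{49}$, capped by a stated upper bound. For $a_0(x) = 2$, the congruence $49 + 105k \equiv 0 \pmod{49}$ forces $k \equiv 0 \pmod 7$, so within $a_1(x) \leq 500$ only $k = 0$ survives, yielding $a_1(x) = 49$; analogous residue calculations give $98, 147, 196, 245, 294$ for $a_0(x) \in \{9, 16, 23, 30, 37\}$. For $a_0(x) = 44$ the smallest admissible step (namely $k = 3$) gives $a_1(x) = 343$, overshooting the bound $260$, and for $a_0(x) = 51$ the unique value $77$ is not divisible by $49$; hence both rows are excluded, accounting for the dashes.

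The only mildly delicate step is making sure the preceding Lemma truly applies, and in particular that the divisibility by $7$ built into $3a_1(x)/7$ is genuine. That divisibility follows from the Corollary applied to $G = C_7$: the number of induced $C_7$s fixed by $x$ is $\equiv 0 \pmod 7$. Since any orbit of $\langle x \rangle$ has size $1$ or $7$, a fixed induced $C_7$ is either contained in $Fix(x)$ or is a single $7$-orbit; by Aschbacher's classification recalled above, $Fix(x)$ has no induced $C_7$, so the count of fixed induced $C_7$s coincides with the number of $C_7$-shaped orbits. This bookkeeping reduction to the Corollary is the only non-elementary ingredient; once it is in place, the theorem is a one-line congruence followed by a routine enumeration of surviving residues.
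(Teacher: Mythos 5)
Your proof is correct and follows essentially the same route as the paper: the preceding lemma gives $7\mid 3a_1(x)/7$, hence $49\mid a_1(x)$ since $\gcd(3,49)=1$, and intersecting this with the arithmetic progressions of Proposition~\ref{tabulka} (using $105\equiv 7\pmod{49}$) yields exactly the tabulated values, with $a_0(x)\in\{44,51\}$ excluded. Your closing paragraph additionally supplies the justification for the lemma's divisibility claim --- via the corollary on fixed induced $C_7$s together with the observation that $Fix(x)$ contains no induced $C_7$ for the admissible values of $a_0(x)$ --- which the paper leaves implicit, and is a worthwhile clarification rather than a deviation.
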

\begin{proof} 
Since $\frac{3a_1(x)}{7}$ is a multiple of $7$, $x$ can by an automorphism of order $7$ of $\Gamma$ only if $a_1(x)$ is a multiple of $49$. For each possible value $a_0(x)$ there is at most one such $a_1(x)$ and there are two cases, specifically $a_0(x)\in\{44,51\}$, when there is no feasible value for $a_1(x)$. Each of the possible values of $a_1(x)$ satisfies also the condition which follows from the number of subgraphs isomorphic to $\overline{K_7}$.
\end{proof}

An analogy of this method can be used for any automorphism of prime order. We have used it also for automorphism of order $5$ but it led to no new results.\\

\subsection{A $srg(3250,57,0,1)$ with no induced Petersen graph}

The case when a $srg(3250,57,0,1)$ does not contain Petersen graph as induced subgraph is not excluded by our algorithm. Therefore the information about its automorphism group is naturally interesting.\\ 
Let $x$ be an automorphism of a prime order $p$ of $srg(3250,57,0,1)$ with no induced Petersen graph. Then the set $Fix(x)$, containing all vertices that are fixed by $x$, can induce neither Petersen graph nor Hoffman-Singleton graph. If subgraph $\Delta$ is fixed by $x$ and its vertex-set is not a subset of $Fix(x)$, then there has to be an automorphism of order $p$ of $\Delta$. So, if a value $P_G$ for some $G$ with no automorphism of order $p$ is congruent to $a \mod{p}$, then the set $Fix(x)$ contains $a \mod{p}$ copies of $G$. By analyzing induced subgraphs of higher orders we obtain the following.

\begin{lema}
A $srg(3250,57,0,1)$ with no induced Petersen graph does not have an automorphism of order $3$ and automorphism of order $5$ fixing the Hoffman-Singleton graph.
\end{lema}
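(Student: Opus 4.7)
The lemma makes two independent claims. The second (no order-$5$ automorphism with $Fix(x)$ the Hoffman-Singleton graph) is nearly immediate: such an $x$ would give an induced copy of the Hoffman-Singleton graph in $\Gamma$, and Table~\ref{matan} records that the Hoffman-Singleton graph itself contains $525$ induced Petersen subgraphs, contradicting the standing hypothesis.

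For the first claim I argue by contradiction: suppose $x$ is an order-$3$ automorphism of $\Gamma$. Aschbacher's proposition cited above restricts $Fix(x)$ to $\emptyset$, $K_1$, the pentagon $C_5$, the Petersen graph, the Hoffman-Singleton graph, or a star $K_{1,m}$ with $m \geq 1$. Petersen is ruled out by hypothesis and Hoffman-Singleton by the argument of the previous paragraph. Since $x$ partitions $V(\Gamma)$ into fixed points and $3$-cycles and $3250 \equiv 1 \pmod 3$, we must have $a_0(x) \equiv 1 \pmod 3$, eliminating the empty fix-graph (zero fixed vertices) and the pentagon (five fixed vertices).

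The star case $Fix(x) = K_{1,m}$, $m \geq 1$, is then dispatched by a local orbit count. Pick any leaf $\ell$ of the fixed star; since $x$ fixes $\ell$, it permutes $N(\ell)$, and triangle-freeness together with the star structure forces $N(\ell) \cap Fix(x) = \{c\}$, where $c$ is the centre. The remaining $56$ neighbours of $\ell$ lie outside $Fix(x)$ and must therefore fall into $x$-orbits of size exactly $3$, requiring $3 \mid 56$, which fails. Thus only $Fix(x) = K_1$ remains.

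For this last case I plan to invoke the mod-$p$ counting framework set up immediately before the lemma. Concretely: run the algorithm of Section~\ref{sec-alg} at some order $o \in \{5,6,7\}$ (recall that $P_G(\Gamma)$ is uniquely determined by $n$, $k$, $\mu$ whenever $|V(G)| \leq 9$) and isolate a graph $G$ that has no automorphism of order $3$, has at least two vertices, and satisfies $P_G \not\equiv 0 \pmod 3$. Any such witness finishes the proof: the congruence $P_G \equiv |\{\Delta \simeq G : V_\Delta \subseteq Fix(x)\}| \pmod 3$ from the preceding discussion forces $Fix(x)$ to contain a positive number of induced copies of $G$, which is impossible when $|Fix(x)| = 1$. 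The principal obstacle I anticipate is that $n \equiv 1$ and $k \equiv 0 \pmod 3$ conspire to annihilate many of the closed-form counts of small graphs modulo $3$, so exhibiting a suitable $G$ will require a systematic scan through the algorithm's output rather than an a priori choice.
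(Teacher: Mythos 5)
Your treatment of the order-$5$ claim and of the empty, pentagon, and star cases of the order-$3$ claim is correct and pleasantly elementary: the Hoffman--Singleton graph contains $525$ induced Petersen graphs by Table~\ref{matan}; $3250\equiv 1\pmod{3}$ forces $a_0(x)\equiv 1\pmod 3$, killing $a_0\in\{0,5\}$; and the observation that a fixed leaf $\ell$ of a fixed star has exactly one fixed neighbour, so that $3$ would have to divide $|N(\ell)\setminus Fix(x)|=56$, disposes of every $K_{1,m}$ with $m\geq 1$. None of this appears explicitly in the paper, which offers no written proof beyond the paragraph sketching the mod-$p$ subgraph-count method, so your local orbit-counting is a genuinely different and more self-contained route for those cases.

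The gap is the case $Fix(x)=K_1$, which you do not close: you describe a plan (find a graph $G$ on at least two vertices with no order-$3$ automorphism and $P_G\not\equiv 0\pmod 3$) and then concede that you have no witness. That concession is load-bearing. The obvious candidates all fail: $P_{K_2}=nk/2=92625$, $P_{\overline{K_2}}=\binom{n}{2}-nk/2$, $P_{K_{1,2}}$, and the count of a single edge plus an isolated vertex are each divisible by $3$ precisely because $n\equiv 1$ and $k\equiv 0\pmod 3$, as you anticipate. A "systematic scan" might or might not produce a witness at orders up to $9$; as written, the proof of the lemma is incomplete. The quickest repair is to notice that the case is already excluded unconditionally by Proposition~\ref{tabulka}: for $p=3$ the only admissible fixed-point counts are $a_0(x)=1$, which carries the unsatisfiable requirement $a_1(x)=27+45k=0$, and $a_0(x)=10$. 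So an order-$3$ automorphism of any $srg(3250,57,0,1)$ has exactly ten fixed points, whence $Fix(x)$ is the Petersen graph or $K_{1,9}$; the former is excluded by hypothesis and the latter by your leaf argument. Citing that proposition turns your proposal into a complete proof and makes the speculative mod-$3$ scan unnecessary.
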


\begin{prop}
The number of induced copies of any graph $G$ of order $10$ in a $srg(3250,57,0,1)$ with no induced Petersen graph is congruent to $0 \mod{p}$ for each $p\in\{7,11,13,19\}$.
\end{prop}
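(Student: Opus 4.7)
The plan is to reduce the claim to an explicit modular check using the output of the algorithm of Section \ref{sec-alg}. By the theorem classifying occurrences of $10$-vertex graphs in $\Gamma=srg(3250,57,0,1)$, the solution space of the corresponding linear system is one-dimensional, with $P_{\mathrm{Pet}}:=P_{\text{Petersen}}$ as free parameter. Consequently, for each of the $869$ triangle-free, $C_4$-free graphs $G$ on $10$ vertices there exist rational constants $a_G, b_G$, determined by the system, with
$$P_G = a_G + b_G \, P_{\mathrm{Pet}}.$$

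First I would run the algorithm for $o=10$ on the parameter set $(3250,57,0,1)$ and solve the resulting linear system over $\mathbb{Q}$, extracting the constants $a_G$ for every $G$ in the list $L$. Next, the hypothesis that $\Gamma$ contains no induced Petersen subgraph gives $P_{\mathrm{Pet}}=0$, so $P_G = a_G$ for every $G\in L$. Finally, for each prime $p\in\{7,11,13,19\}$ and each $G\in L$ I would verify the congruence $a_G \equiv 0 \pmod{p}$.

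The main obstacle is the sheer size of the computation: this is $869\times 4$ modular checks on integers obtained from a linear system with $4221$ equations and $869$ unknowns. In practice one avoids the large rationals by performing the algorithm's linear-algebra step directly over $\mathbb{F}_p$ for each of the four primes separately: form the reduced coefficient matrix $M \bmod p$ and right-hand side $b \bmod p$, impose $P_{\mathrm{Pet}}\equiv 0 \pmod{p}$, and check that every remaining coordinate of the (necessarily unique) solution vanishes. As a consistency check, the same pipeline applied at $o=9$ must reproduce, modulo $p$, the constant values of $P_G$ from the preceding theorem; this validates the implementation before one trusts its $o=10$ output. I do not see a structural shortcut that avoids this case analysis, although it is suggestive that the four primes in question are precisely those admissible orders of prime automorphisms of $\Gamma$ from Proposition \ref{tabulka} that are not already handled by the preceding lemma, which makes the numerical coincidence look meaningful rather than accidental.
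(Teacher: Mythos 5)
Your proposal matches the paper's approach: the proposition is presented as a direct readout of the algorithm's order-$10$ output for $srg(3250,57,0,1)$, where the rank-$868$ system on $869$ variables leaves $P_{\text{Petersen}}$ as the single free parameter, so that the hypothesis forces $P_G=a_G$ and the claim reduces to the finite check $a_G\equiv 0\pmod{p}$ for the $869$ triangle- and quadrangle-free graphs and the four primes (all other $10$-vertex graphs having $P_G=0$ trivially). One caution on your $\mathbb{F}_p$ shortcut: the rank of $M$ can drop modulo $p$, in which case imposing $P_{\text{Petersen}}\equiv 0$ need not single out a unique solution, so the reliable route is to solve over $\mathbb{Q}$ (the counts $a_G$ are integers) and reduce the results modulo each prime afterwards.
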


Hence, obtained results correspond to already known properties of a $srg(3250,57,0,1)$. We can see that the assumption that a $srg(3250,57,0,1)$ does not contains induced Petersen graph does not restrict the automorphism group of this graph. Therefore it can be interesting to look for Petersen-free Moore graphs.

\newpage
 \section{t-vertex condition in $SRG$}
In this chapter we introduce the concept of a $t$-vertex condition together with known results on this topic. We show a connection with our approach, which gives a way to extend our algorithm to solve problems in this area.
\par Let us start with a remark that each known tfSRG is highly symmetric. By this we mean that for all pairs of edges (non-edges) $(u,v)$, $(u',v')$ in a $tfSRG$ $\Gamma$ there exists an automorphism of this graph mapping $u$ onto $u'$ and $v$ onto $v'$, which is equivalent to saying that the full automorphism group is a rank-three group. Such graphs are called rank-three graphs.

\begin{prop}
Every rank-three graph is strongly regular.
\end{prop}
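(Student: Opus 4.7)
The plan is to read off the three defining parameters of an SRG directly from the three orbits of the rank-three automorphism group. Let $\Gamma$ be a rank-three graph and write $G = \mathrm{Aut}(\Gamma)$. By hypothesis $G$ is transitive on ordered pairs of adjacent vertices and on ordered pairs of non-adjacent vertices; together with the diagonal, these account for exactly three orbits of $G$ on $V_\Gamma \times V_\Gamma$. In particular $G$ is transitive on $V_\Gamma$, and both non-diagonal orbits are non-empty, so $\Gamma$ is neither complete nor empty.

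First, vertex-transitivity of $G$ forces $|N(v)|$ to be a constant $k$, so $\Gamma$ is $k$-regular. Next, for any two adjacent vertices $u,v$ and any other adjacent pair $u',v'$, transitivity on ordered edges supplies some $\phi \in G$ with $\phi(u) = u'$ and $\phi(v) = v'$; since $\phi$ preserves adjacency, its restriction is a bijection between $N(u) \cap N(v)$ and $N(u') \cap N(v')$, so $|N(u) \cap N(v)|$ is a constant $\lambda$ over all edges. The identical argument applied to the orbit of ordered non-edges produces a constant $\mu$ counting common neighbours of any two non-adjacent vertices. Combining the three observations, $\Gamma$ satisfies Definition \ref{def_SRG} with parameters $(|V_\Gamma|,k,\lambda,\mu)$.

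The proof is essentially a direct translation of ``rank three'' into ``strongly regular,'' so I do not expect any substantive obstacle; the only subtle point is to confirm that the rank-three hypothesis genuinely produces two non-trivial orbits on ordered pairs (rather than collapsing into a single off-diagonal orbit), which is precisely what allows both $\lambda$ and $\mu$ to be well-defined and rules out the degenerate complete/empty cases.
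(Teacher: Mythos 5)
Your proof is correct and follows essentially the same route as the paper: automorphisms preserve the number of common neighbours, and transitivity on ordered edges and ordered non-edges (plus vertex-transitivity for regularity) makes $k$, $\lambda$ and $\mu$ well-defined constants. Your version merely spells out the details (the bijection $N(u)\cap N(v)\to N(u')\cap N(v')$ and the non-degeneracy of the two off-diagonal orbits) that the paper's proof leaves implicit.
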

\begin{proof}
The number of common neighbors of any couple of vertices in graph $\Gamma$ has to be the same as the number of common neighbors of their images under some automorphism of $\Gamma$.
As $\Gamma$ is a rank-three graph, its automorphism group acts transitively on its edges and non-edges as well, from which follows that $\Gamma$ is an SRG. 
\end{proof}

This fascinating symmetry of existing tfSRG is a motivation for introducing the concept of a $t$-vertex condition, which is in a way a generalization of properties of rank-three graphs. This idea is discussed in \cite{tc} and we present a sketch here.
\par First, we need to define which subgraphs of $\Gamma$ will be considered equivalent. We say that two subgraphs of $\Gamma$ are of the same type with respect to a pair $(x,y)$ of distinct vertices if both contain vertices $x$, $y$ and there exists an isomorphism of one onto the other mapping $x$ to $x$ and $y$ to $y$. In the figure \ref{t_con_hr} there are all possible types of $3$-vertex subgraphs with respect to an edge $(x,y)$. Clearly, the last one cannot be contained in a tfSRG, but we consider also this case for full illustration.


\begin{figure}[h!]
	\centering
 \includegraphics[width=1\textwidth]{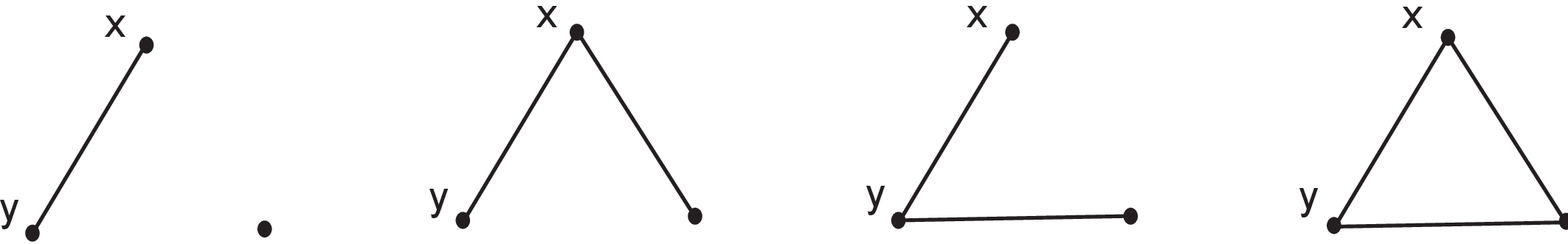}
	\caption{} 
	\label{t_con_hr}
\end{figure}

%


\begin{definicia}
Let $t$ be an integer greater than one. We say that a graph $\Gamma$ satisfies a \emph{$t$-vertex condition} if the number of its $t$-vertex subgraphs of any type with respect to a given pair of distinct vertices $(x,y)$ depends only on whether $(x,y)$ is an edge or not.
\end{definicia}

It is easy to verify that each rank-three graph satisfies a $t$-vertex condition for all $t\geq 2$. The long-standing conjecture by M. Klin \cite{t-v-con} suggests that it could by interesting to consider graphs which are not rank three graphs and satisfy the $t$-vertex condition for large $t$. The conjecture predicts a number $t_0$ such that the only graphs satisfying the $t_0$-vertex condition are rank-three graphs. Since S. Reichard shows in \cite{t-v-sven} that for any fixed $t$ the $t$-vertex condition can be checked in polynomial time, a proof of Klin's conjecture would have the remarkable consequence that rank-three graphs can be recognised combinatorially in polynomial time.

Since not all SRGs are rank three graphs we look individually at SRGs and $t$-vertex condition for these graphs. Let us reformulate the definition of SRG in the following way. The first condition says that any edge of a $srg(n,k,\lambda,\mu)$ lies in $\lambda$ triangles. The second one determines the number of $K_{1,2}$ in which any non-edge occurs. Now, note that it is also possible to compute numbers of remaining subgraphs with respect to an edge or a non-edge using only $n$, $k$, $\lambda$ and $\mu$. For example the number of $K_{1,2}$ with respect to an edge $(x,y)$ with $y$ of order two is $k-\lambda-1$. Therefore the $3$-vertex condition holds for any SRG. However, the $t$-vertex condition for $t>3$ is not satisfied in general. In \cite{tc} the authors show the following property of $4$-vertex subgraphs of any SRG.

\begin{veta}\label{t4}
Let $\Gamma$ be a SRG. If the number of $4$-vertex subgraphs of $\Gamma$ of one type with respect to an edge (non-edge) $(x,y)$ is independent of the choice $(x,y)$, then so is the number of $4$-vertex subgraphs of each type with respect to $(x,y)$.
\end{veta}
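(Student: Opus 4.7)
The plan is to set up a \emph{local} analog of the linear system developed in Section \ref{sec-alg} for $4$-vertex subgraphs, now with a distinguished pair $\{x,y\}$ fixed throughout, and then to mirror the one-parameter rank analysis that yields Theorem \ref{order4}.

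First I would fix an edge (the non-edge case is analogous) $(x,y)$ of $\Gamma$ and, for each isomorphism type $G$ of a $4$-vertex graph with a marked adjacent pair, let $c_G(x,y)$ denote the number of induced $4$-vertex subgraphs of $\Gamma$ that contain $\{x,y\}$ and realise type $G$ with $\{x,y\}$ playing the role of the marked pair. A first routine observation is that for every $3$-vertex type $U$ with a marked edge the number of $z\in V_\Gamma\setminus\{x,y\}$ with $\{x,y,z\}\simeq U$ depends only on $n,k,\lambda,\mu$: there are $\lambda$ common neighbours of $x$ and $y$, $k-\lambda-1$ vertices adjacent only to $x$, and so on. An identical remark applies if $(x,y)$ is a non-edge, with $\mu$ replacing $\lambda$.

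Next, for each such $3$-vertex type $U$ I would apply the four equation templates of Proposition \ref{prop_rovnice}, parts (i)--(iv), not to all of $\Gamma$ but to the sub-universe of $4$-vertex subgraphs containing $\{x,y\}$: extending $\{x,y,z\}$ by an arbitrary new vertex $w$, by a neighbour of a chosen vertex of $\{x,y,z\}$, by a common neighbour of one of its edges, and by a common neighbour of one of its non-edges. Summing each template over the (parameter-controlled) number of valid $z$ gives a linear relation on the $c_G(x,y)$'s whose coefficients are fixed integers depending only on the marked $4$-vertex types, and whose right-hand side depends only on $n,k,\lambda,\mu$ and on whether $(x,y)$ is an edge. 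Concatenating all these relations, together with the tautology $\sum_G c_G(x,y)=\binom{n-2}{2}$, produces a system $M\mathbf{c}(x,y)=\mathbf{b}$ in which $M$ and $\mathbf{b}$ do not depend on the particular pair $(x,y)$.

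The decisive step is to show that $M$ has corank exactly one and, moreover, that a generator $\mathbf{v}$ of $\ker M$ has every coordinate nonzero. The corank claim is the local counterpart of Theorem \ref{order4}: a single local quantity (morally playing the role that $P_{K_{1,3}}$ plays globally) suffices to determine all $c_G(x,y)$. I would verify it by running the localised algorithm of Section \ref{sec-alg} and Gaussian-reducing $M$, in the spirit of the computation behind Theorem \ref{order4}; the non-vanishing of each entry of $\mathbf{v}$ I would read directly from the reduced form. Granting this, the solution set of $M\mathbf{c}=\mathbf{b}$ is the affine line $\{\mathbf{c}_0+t\mathbf{v}:t\in\R\}$, so for each $(x,y)$ there is a unique $t(x,y)\in\R$ with $\mathbf{c}(x,y)=\mathbf{c}_0+t(x,y)\mathbf{v}$. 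If for some type $G_0$ the value $c_{G_0}(x,y)=(c_0)_{G_0}+t(x,y)v_{G_0}$ is independent of $(x,y)$, then, because $v_{G_0}\neq 0$, the scalar $t(x,y)$ itself is constant, whence every $c_G(x,y)$ is constant. The main obstacle is exactly this corank-and-support analysis of $M$: corank one should follow from the same kind of row-reduction that proves Theorem \ref{order4}, but the genuinely new input is the check that no coordinate of $\mathbf{v}$ vanishes, since otherwise the statement of the theorem would have to be refined to ``one type with nonzero contribution to $\mathbf{v}$''.
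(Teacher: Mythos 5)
The first thing to say is that the paper does not prove Theorem \ref{t4} at all: it is quoted from \cite{tc}, and the only thing the thesis adds is the remark that ``the main idea in the proof of Theorem \ref{t4} is very similar to our approach described in Section 3,'' namely deriving linear relations between the numbers of $4$-vertex subgraphs with respect to a fixed pair of vertices. So there is no in-paper argument to measure you against; what can be said is that your plan --- localise the machinery of Section \ref{sec-alg} and Proposition \ref{prop_rovnice} to the sub-universe of $4$-sets containing a marked pair $\{x,y\}$, obtain a system $M\mathbf{c}(x,y)=\mathbf{b}$ with $M,\mathbf{b}$ independent of the pair, and argue along an affine line of solutions --- is exactly the method the paper attributes to \cite{tc}, and the setup (parameter-determined counts of marked $3$-vertex types, templates (i)--(iv) applied locally, the tautological equation $\sum_G c_G=\binom{n-2}{2}$) is correctly assembled.

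The gap is that the two facts on which the entire conclusion rests are asserted rather than established: that $M$ has corank exactly one, and that a generator $\mathbf{v}$ of $\ker M$ has no zero coordinate. Neither follows formally from anything in the thesis. Theorem \ref{order4} and the accompanying rank computation ($17$ equations, $11$ variables, rank $10$) concern the \emph{global, unmarked} system; the marked system has a different and larger variable set (one unknown per $4$-vertex type with a distinguished ordered pair), so its rank must be recomputed from scratch, and ``corank one'' is precisely the non-trivial content of the theorem, not a routine consequence. The non-vanishing of every coordinate of $\mathbf{v}$ is even more delicate, and you are right that it cannot be waved away: since the coordinates of $\mathbf{v}$ sum to zero (because $\sum_G c_G$ is constant), some are positive and some negative, and nothing a priori excludes a zero. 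If some $v_{G_0}=0$, then $c_{G_0}(x,y)$ is parameter-determined in \emph{every} SRG, the hypothesis ``$c_{G_0}$ is independent of $(x,y)$'' is vacuously true, and --- since SRGs failing the $4$-vertex condition exist --- the implication as stated would be false for that type. So the statement of the theorem is logically equivalent to the claim you defer. As it stands your text is a correct reduction of the theorem to a finite verification, together with an accurate diagnosis of what that verification must show, but the verification itself (an explicit row-reduction of the marked system, or equivalently the explicit combinatorial identities expressing each $c_G(x,y)$ as a parameter constant plus a nonzero multiple of one chosen count, which is how \cite{tc} proceeds) is the proof, and it is missing.
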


Recently, S. Reichard improved in \cite{t-v-sven} this result in the following way. 

\begin{veta}
Let $\Gamma$ be an SRG satisfying a $t-1$-vertex condition. Then, in order to check that $\Gamma$ satisfies a $t$-vertex condition it suffices to consider graph types $G$ with respect to a pair of vertices $(x,y)$, in which each vertex from $V_G-\{x,y\}$ has valency at least $3$.
\end{veta}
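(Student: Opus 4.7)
The plan is to adapt the algorithmic framework of Section 3 to the labelled setting. Fix a pair of distinct vertices $(x,y)$ of $\Gamma$ and, for a graph type $G$ with respect to $(x,y)$, write $P_{G,(x,y)}$ for the number of induced subgraphs of $\Gamma$ of type $G$ with respect to $(x,y)$. By the $(t-1)$-vertex condition, $P_{G',(x,y)}$ depends on $(x,y)$ only through its adjacency for every $(t-1)$-vertex type $G'$; the goal is to deduce the same for every $t$-vertex type $G$, given as additional hypothesis that this already holds for every $G$ in which each vertex of $V_G - \{x,y\}$ has valency at least $3$.

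First I would establish a labelled analogue of Proposition \ref{prop_rovnice}. For every $(t-1)$-vertex type $G'$ with respect to $(x,y)$ and for every orbit of $V_{G'}$, $E_{G'}$ or $\overline{E}_{G'}$ under the subgroup $Aut_{(x,y)}(G')$ of $Aut(G')$ fixing $x$ and $y$ pointwise, the very same counting arguments as in the proof of Proposition \ref{prop_rovnice}, now restricted throughout to subgraphs containing the distinguished pair $(x,y)$, yield one linear equation relating $P_{G',(x,y)}$ to a sum $\sum_H \alpha_H\, P_{H,(x,y)}$ over all $t$-vertex types $H$ containing $G'$ as an $(x,y)$-labelled subgraph.

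I would then prove by reverse induction on $|E_G|$ that $P_{G,(x,y)}$ depends on $(x,y)$ only through its adjacency for every $t$-vertex type $G$. In the base case $|E_G|$ is maximal, so $G = K_t$ and all non-$(x,y)$ vertices have valency $t - 1 \geq 3$: this is exactly the hypothesis. For the inductive step, pick $v \in V_G - \{x,y\}$ with $d := \deg_G(v) \leq 2$, set $G' = G - v$ and apply the labelled equation associated to: an arbitrary added vertex if $d=0$; an added neighbour of the unique neighbour of $v$, taken in the appropriate $Aut_{(x,y)}(G')$-orbit, if $d=1$; an added common neighbour of the two neighbours of $v$ in $G$ if $d=2$, with parameter $\lambda$ or $\mu$ according to whether this pair is adjacent. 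In each case $G$ itself appears on the right-hand side (because $v$ realises an $(x,y)$-preserving extension of $G'$ of the prescribed adjacency type), while every other $H$ on the right-hand side is an extension of $G'$ in which the added vertex has degree strictly greater than $d$; such $H$ satisfy $|E_H| > |E_G|$, so the inductive hypothesis applies to them.

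The main obstacle is to verify in each of the three subcases that, once $P_{G,(x,y)}$ is isolated, the coefficient on its side is strictly positive. This reduces to counting $(x,y)$-preserving embeddings of $G'$ into $G$ that match the chosen adjacency pattern to $V_{G'}$, a count which is at least the size of the $Aut_{(x,y)}(G)$-orbit of $v$ and hence non-zero. Combined with the $(t-1)$-vertex condition for $P_{G',(x,y)}$ and the inductive hypothesis for the $P_{H,(x,y)}$ with $|E_H| > |E_G|$, the isolated equation then expresses $P_{G,(x,y)}$ as a linear combination of quantities each depending on $(x,y)$ only through its adjacency, completing the induction.
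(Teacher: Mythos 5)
Your argument is sound, but note first that the thesis does not actually prove this statement: it is quoted from S.~Reichard's paper \cite{t-v-sven}, and the author explicitly remarks that Reichard and Pech obtained their results ``using other approaches'' while conjecturing that a modification of the Section~3 machinery would recover (and refine) them. Your proposal is precisely that modification carried out. The labelled analogue of Proposition \ref{prop_rovnice} goes through verbatim because Lemma \ref{lema-sumy} is a purely local consequence of strong regularity and is unaffected by restricting the outer sum to subgraphs $\Delta$ containing the fixed pair $(x,y)$; one only has to replace $Aut(G')$ by the stabiliser $Aut_{(x,y)}(G')$ throughout, as you do. The reverse induction on $|E_G|$ is also correct: in each of the three subcases the contributing types $H$ are exactly the one-vertex extensions of $G'$ in which the new vertex dominates some element of the chosen orbit, any such $H$ with the new vertex of degree exactly $d$ is isomorphic to $G$ as an $(x,y)$-type (via an element of $Aut_{(x,y)}(G')$ carrying the dominated set to $N_G(v)$), and the coefficient of $P_{G,(x,y)}$ is at least the size of the $Aut_{(x,y)}(G)$-orbit of $v$, hence positive. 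What your approach buys over Reichard's, as the thesis itself points out, is that the system of labelled equations in principle determines all the counts $P_{G,(x,y)}$ rather than merely certifying their independence of $(x,y)$.

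Two small points of precision. The base of the induction is not really $K_t$ but the whole set of types with no vertex of $V_G-\{x,y\}$ of valency at most $2$, all of which are covered by the hypothesis; $K_t$ is just the member with the maximal number of edges (and for $t=3$ this set is empty, so the claim there rests directly on the defining equations of an SRG, consistent with the paper's observation that every SRG satisfies the $3$-vertex condition). Secondly, in the $d=2$ subcase with $w_1\sim w_2$ one should note that if $\deg_{G'}(w_1,w_2)>\lambda$ then both sides of the equation vanish identically, so the conclusion $P_{G,(x,y)}=0$ is still independent of $(x,y)$; this is why the inequality hypotheses in Proposition \ref{prop_rovnice} cause no loss. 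Neither point is a gap.
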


Finally, C. Pech continued with this approach \cite{pech} and further reduced the list of graph types whose count matters when checking whether a graph satisfies the $5$-vertex and $6$-vertex condition.

As the number of induced $K_4$ in any tfSRG equals zero, it follows that the $4$-vertex condition holds in this case. Looking at the list provided by C. Pech it follows that any tfSRG satisfies the $5$-vertex condition as well. As some of the $6$-vertex graph types from \cite{pech} do not contain triangles the existence of a tfSRG which does not satisfy the $6$-vertex condition still remains an open problem. In a similar way it can be proven that any Moore graph of valency $57$ satisfies the $9$-vertex conditions. Let us note that these numbers are the same as our maximum orders for which the numbers of induced subgraphs of each type depend only on $k$ and $\mu$ in tfSRGs and only on $k$ in Moore graphs.

The main idea in the proof of Theorem \ref{t4} is very similar to our approach described in Section 3. The authors derive relations between induced subgraph of order $4$ with respect to a pair of vertices in an SRG. Therefore, at least in principle it is possible to derive all the numbers of subgraphs of order $4$ of given type with respect to a pair of vertices. On the other hand, S. Reichard and C. Pech obtained their results using other approaches. In particular, using their approach it is not possible to derive all the numbers of subgraphs of given type with respect to a pair of vertices or to provide different systems of graphs which are sufficient for checking the $t$-vertex conditions.

Therefore, one can expect that modification of our techniques will provide more detailed results than in \cite{pech}, \cite{t-v-sven}.
Moreover, the bounds on the number of induced subgraphs in an SRG further restrict possible numbers of subgraphs of given type with respect to a pair of vertices in graphs satisfying $t$-vertex condition for a given $t$.

\newpage
\newpage
\section*{Conclusion}
Strongly regular graphs are described by two simple conditions. They are regular and the number
of common neighbors of a pair of vertices in a given SRG depends only on whether they
are adjacent or not. In spite of the simplicity of this characterization there are significant applications of SRGs in combinatorics, algebra,
statistics, and group theory, including discovery of several sporadic simple groups.

The goal of this thesis was to find theoretical and algorithmic tools for determining the numbers of induced subgraphs in strongly regular graphs just from their parameters $n$, $k$, $\lambda$, $\mu$ and further applications of such numbers. We considered in more detail a restricted class of these graphs, specifically those with no triangles. In this special case, there are infinitely many feasible sets of parameters for SRGs. Despite this fact there are only seven known examples of such graphs.

One of the most important parts of our work is an algorithm which produces linear equations describing various relations between numbers of induced subgraphs of orders $o$ and $o-1$ in an $SRG$. The analysis of the systems of equations provided by our algorithm gives rise to the main result of the thesis. Namely, we are able to prove that in any $srg(n,k,0,\mu)$ the number of induced subgraphs isomorphic to a given graph $G$ on up to $7$ vertices depends only on parameters $k$ and $\mu$ and on the number of induced $K_{3,3}$s and $K_{3,4}$s. Moreover, in any $srg(3250,57,0,1)$, the number of induced subgraphs isomorphic to a given graph on up to $10$ vertices depends only on the number of induced Petersen graphs in this SRG. 

Among the applications of our methods, there were new results about automorphisms of $srg(3250,57,0,1)$ and new bounds for the numbers of induced $K_{3,3}$ in triangle-free SRGs. At the end of the thesis we discussed possible extension of our approach for the study of $t$-vertex condition.

Despite our best effort we were not able to show that Moore graph of valency $57$ has to contain a Petersen graph as induced subgraph. We feel that it is worthwhile to look for Petersen-free Moore graphs.

As it was observed in Section \ref{geometry} in the case of small triangle free Krein graphs, our computations provide tight bounds for induced $K_{3,3}$ subgraphs in these SRGs. It would be nice to have the confirmation of this observation for all triangle-free Krein graphs.
\addcontentsline{toc}{section}{\numberline{}Conclusion}


\newpage
\bibliographystyle{plain}	
\bibliography{bib_literatura}
\newpage
\newpage
\section*{Appendix}
For illustration of the form of our results, we present number of induced subraphs for selected orders. Upon interest, we are able to offer also other numbers in some more appropriate format.
\subsection*{tfSRG}
order $4$:
\begin{multicols}{4}
\scriptsize{
\setbox\ltmcbox\vbox{
\makeatletter\col@number\@ne
\begin{longtable} {@{\stepcounter{rowcount}\therowcount.\hspace*{\tabcolsep}}l}
$\begin{matrix}  &  &  &  &  & \\
0 & 0 & 1 & 1 \\
0 & 0 & 1 & 1 \\ 
1 & 1 & 0 & 0 \\
1 & 1 & 0 & 0 \end{matrix}$\\
$\begin{matrix}  &  &  &  &  & \\ 
0 & 0 & 0 & 1 \\ 
0 & 0 & 0 & 1 \\ 
0 & 0 & 0 & 1 \\
1 & 1 & 1 & 0 \end{matrix}$\\
$\begin{matrix}  &  &  &  &  & \\ 
0 & 0 & 1 & 0 \\
0 & 0 & 0 & 1 \\
1 & 0 & 0 & 1 \\
0 & 1 & 1 & 0 \end{matrix}$\\
$\begin{matrix}  &  &  &  &  & \\
0 & 0 & 0 & 0 \\
0 & 0 & 0 & 1 \\
0 & 0 & 0 & 1 \\
0 & 1 & 1 & 0 \end{matrix}$\\
$\begin{matrix}  &  &  &  &  & \\
0 & 0 & 0 & 1 \\
0 & 0 & 1 & 0 \\
0 & 1 & 0 & 0 \\
1 & 0 & 0 & 0 \end{matrix}$\\
$\begin{matrix}  &  &  &  &  & \\
0 & 0 & 0 & 0 \\
0 & 0 & 0 & 0 \\
0 & 0 & 0 & 1 \\
0 & 0 & 1 & 0 \end{matrix}$\\
$\begin{matrix}  &  &  &  &  & \\
0 & 0 & 0 & 0 \\
0 & 0 & 0 & 0 \\
0 & 0 & 0 & 0 \\
0 & 0 & 0 & 0 \end{matrix}$\\
\end{longtable}
\unskip
\unpenalty
\unpenalty}
\unvbox\ltmcbox
}
\end{multicols}
\setcounter{rowcount}{0}

\scriptsize{
\begin{longtable}{@{\stepcounter{rowcount}\therowcount.\hspace*{\tabcolsep}}p{15cm}}
$1/8 (\mu +k \mu +k^2-k) k (k-1)/\mu  (\mu -1)$\\
$1/6 (\mu +k \mu +k^2-k) k (k-1)/\mu  (k-2)$\\
$1/2 (\mu +k \mu +k^2-k) k (k-1) (k-\mu )/\mu $\\
$1/2 (\mu +k \mu +k^2-k) k (k-1) (-2 k \mu +k^2-k+\mu +\mu ^2)/\mu ^2$\\
$1/8 (\mu +k \mu +k^2-k) k (3 k \mu -3 k^2 \mu +k^3-k^2+2 k \mu ^2-2 \mu ^2)/\mu ^2$\\
$1/4 (\mu +k \mu +k^2-k) k (-k \mu -3 \mu  k^3+k^4-2 k^3+k^2+4 k^2 \mu ^2+4 k^2 \mu -4 k \mu ^2-2 k \mu ^3+2 \mu ^3)/\mu ^3$\\
$1/24 (\mu +k \mu +k^2-k) k (-3 k \mu -k^2-2 \mu ^2+3 k^2 \mu +2 k \mu ^2+3 k^3+3 k \mu ^3+3 \mu  k^3-6 k^2 \mu ^3-3 k^4 \mu +k^5-3 k^4+6 k^3 \mu ^2-6 k^2 \mu ^2+3 \mu ^4 k-3 \mu ^4+3 \mu ^3)/\mu ^4$\\
\end{longtable}
}
\setcounter{rowcount}{0}
\normalsize
order $5$:

\begin{multicols}{4}
\scriptsize{
\setbox\ltmcbox\vbox{
\makeatletter\col@number\@ne
\begin{longtable} {@{\stepcounter{rowcount}\therowcount.\hspace*{\tabcolsep}}l}
$\begin{matrix}  &  &  &  &  & \\0 & 0 & 0 & 1 & 1 \\ 
0 & 0 & 0 & 1 & 1 \\ 
0 & 0 & 0 & 1 & 1 \\ 
1 & 1 & 1 & 0 & 0 \\ 
1 & 1 & 1 & 0 & 0 \end{matrix}$\\
$\begin{matrix}  &  &  &  &  & \\ 
0 & 0 & 0 & 0 & 1 \\ 
0 & 0 & 0 & 1 & 1 \\ 
0 & 0 & 0 & 1 & 1 \\ 
0 & 1 & 1 & 0 & 0 \\ 
1 & 1 & 1 & 0 & 0 \end{matrix}$\\
$\begin{matrix}  &  &  &  &  & \\ 
0 & 0 & 0 & 0 & 0 \\ 
0 & 0 & 1 & 1 & 0 \\ 
0 & 1 & 0 & 0 & 1 \\ 
0 & 1 & 0 & 0 & 1 \\ 
0 & 0 & 1 & 1 & 0 \end{matrix}$\\
$\begin{matrix}  &  &  &  &  & \\ 
0 & 0 & 0 & 0 & 1 \\ 
0 & 0 & 0 & 0 & 1 \\ 
0 & 0 & 0 & 0 & 1 \\ 
0 & 0 & 0 & 0 & 1 \\ 
1 & 1 & 1 & 1 & 0 \end{matrix}$\\
$\begin{matrix}  &  &  &  &  & \\ 
0 & 0 & 0 & 0 & 1 \\ 
0 & 0 & 0 & 0 & 1 \\ 
0 & 0 & 0 & 1 & 0 \\ 
0 & 0 & 1 & 0 & 1 \\ 
1 & 1 & 0 & 1 & 0 \end{matrix}$\\
$\begin{matrix}  &  &  &  &  & \\ 
0 & 0 & 0 & 0 & 0 \\ 
0 & 0 & 0 & 0 & 1 \\ 
0 & 0 & 0 & 0 & 1 \\ 
0 & 0 & 0 & 0 & 1 \\ 
0 & 1 & 1 & 1 & 0 \end{matrix}$\\
$\begin{matrix}  &  &  &  &  & \\ 
0 & 0 & 1 & 0 & 1 \\ 
0 & 0 & 0 & 1 & 1 \\ 
1 & 0 & 0 & 1 & 0 \\ 
0 & 1 & 1 & 0 & 0 \\ 
1 & 1 & 0 & 0 & 0 \end{matrix}$\\
$\begin{matrix}  &  &  &  &  & \\ 
0 & 0 & 0 & 1 & 0 \\ 
0 & 0 & 0 & 0 & 1 \\ 
0 & 0 & 0 & 1 & 1 \\ 
1 & 0 & 1 & 0 & 0 \\ 
0 & 1 & 1 & 0 & 0 \end{matrix}$\\
$\begin{matrix}  &  &  &  &  & \\ 
0 & 0 & 0 & 0 & 0 \\ 
0 & 0 & 0 & 1 & 0 \\ 
0 & 0 & 0 & 0 & 1 \\ 
0 & 1 & 0 & 0 & 1 \\ 
0 & 0 & 1 & 1 & 0 \end{matrix}$\\
$\begin{matrix}  &  &  &  &  & \\ 
0 & 0 & 0 & 1 & 0 \\ 
0 & 0 & 0 & 0 & 1 \\ 
0 & 0 & 0 & 0 & 1 \\ 
1 & 0 & 0 & 0 & 0 \\ 
0 & 1 & 1 & 0 & 0 \end{matrix}$\\
$\begin{matrix}  &  &  &  &  & \\ 
0 & 0 & 0 & 0 & 0 \\ 
0 & 0 & 0 & 0 & 0 \\ 
0 & 0 & 0 & 0 & 1 \\ 
0 & 0 & 0 & 0 & 1 \\ 
0 & 0 & 1 & 1 & 0 \end{matrix}$\\
$\begin{matrix}  &  &  &  &  & \\ 
0 & 0 & 0 & 0 & 0 \\ 
0 & 0 & 1 & 0 & 0 \\ 
0 & 1 & 0 & 0 & 0 \\ 
0 & 0 & 0 & 0 & 1 \\ 
0 & 0 & 0 & 1 & 0 \end{matrix}$\\
$\begin{matrix}  &  &  &  &  & \\ 
0 & 0 & 0 & 0 & 0 \\ 
0 & 0 & 0 & 0 & 0 \\ 
0 & 0 & 0 & 0 & 0 \\ 
0 & 0 & 0 & 0 & 1 \\ 
0 & 0 & 0 & 1 & 0 \end{matrix}$\\
$\begin{matrix}  &  &  &  &  & \\ 
0 & 0 & 0 & 0 & 0 \\ 
0 & 0 & 0 & 0 & 0 \\ 
0 & 0 & 0 & 0 & 0 \\ 
0 & 0 & 0 & 0 & 0 \\ 
0 & 0 & 0 & 0 & 0 \end{matrix}$\\
\end{longtable}
\unskip
\unpenalty
\unpenalty}
\unvbox\ltmcbox
}
\end{multicols}
\setcounter{rowcount}{0}
\newpage
\scriptsize{
\begin{longtable}{@{\stepcounter{rowcount}\therowcount.\hspace*{\tabcolsep}}p{15cm}}
$1/12 (\mu +k \mu +k^2-k) k (k-1) (\mu -1) (\mu -2)/\mu  $\\
$1/2 (\mu +k \mu +k^2-k) k (k-1) (\mu -1) (k-\mu )/\mu  $\\
$1/8 (\mu +k \mu +k^2-k) k (k-1) (\mu -1) (\mu -3 k \mu +k^2-k+2 \mu ^2)/\mu ^2 $\\
$1/24 (\mu +k \mu +k^2-k) k (k-1)/\mu  (k-2) (k-3) $\\
$1/2 (\mu +k \mu +k^2-k) k (k-1) (-2 k \mu +k^2-k+\mu +\mu ^2)/\mu  $\\
$1/6 (\mu +k \mu +k^2-k) k (k-1) (k^3-3 k^2-3 k^2 \mu +3 k \mu ^2+2 k+6 k \mu -3 \mu ^2-\mu ^3-2 \mu )/\mu ^2 $\\
$1/10 (\mu +k \mu +k^2-k) k (k-1) (k-\mu ) $\\
$1/2 (\mu +k \mu +k^2-k) k (k-1) (-3 k \mu +k^2+2 \mu ^2)/\mu  $\\
$1/2 (\mu +k \mu +k^2-k) k (k-1) (-4 k^2 \mu +k^3-k^2+6 k \mu ^2-\mu ^2-3 \mu ^3+2 k \mu )/\mu ^2 $\\
$1/4 (\mu +k \mu +k^2-k) k (k-1) (-5 k^2 \mu +k^3-k^2+3 k \mu +8 k \mu ^2-2 \mu ^2-4 \mu ^3)/\mu ^2 $\\
$1/4 (\mu +k \mu +k^2-k) k (k-1) (-7 k \mu ^2-5 \mu  k^3+k^4-2 k^3+k^2+3 \mu ^3+11 k^2 \mu ^2+6 k^2 \mu -12 k \mu ^3+5 \mu ^4-k \mu )/\mu ^3 $\\
$1/8 (\mu +k \mu +k^2-k) k (5 k \mu ^2-20 k^2 \mu ^2+10 \mu  k^3+15 k^3 \mu ^2-6 k^4 \mu +k^5-2 k^4+k^3+20 k \mu ^3-18 k^2 \mu ^3-2 \mu ^3-4 k^2 \mu -8 \mu ^4+8 \mu ^4 k)/\mu ^3 $\\
$1/12 (\mu +k \mu +k^2-k) k (k \mu ^2-k^3+8 k^2 \mu ^2-5 k \mu ^3-6 \mu  k^3-27 k^3 \mu ^2+36 k^2 \mu ^3+12 k^4 \mu -3 k^5+3 k^4-30 \mu ^4 k+12 \mu ^5-31 k^3 \mu ^3+30 k^2 \mu ^4+18 k^4 \mu ^2-6 \mu  k^5-12 \mu ^5 k+k^6)/\mu ^4 $\\
$1/120 (\mu +k \mu +k^2-k) k (6 k^2 \mu +11 k \mu ^2+6 \mu ^3+k^3-11 \mu ^4-10 k^2 \mu ^2-13 k \mu ^3-12 \mu  k^3+8 k^3 \mu ^2+10 k^2 \mu ^3+6 k^5-4 k^4-4 \mu ^4 k+15 \mu ^5+43 k^3 \mu ^3-51 k^2 \mu ^4-30 k^4 \mu ^2+12 \mu  k^5+42 \mu ^5 k-46 k^4 \mu ^3+66 k^3 \mu ^4+21 k^5 \mu ^2-6 k^6 \mu -57 k^2 \mu ^5+k^7-4 k^6+22 \mu ^6 k-22 \mu ^6)/\mu ^5 $\\
\end{longtable}
}
\setcounter{rowcount}{0}
\normalsize
order $6$:

\begin{multicols}{3}
\scriptsize
\setbox\ltmcbox\vbox{
\makeatletter\col@number\@ne
\begin{longtable} {@{\stepcounter{rowcount}\therowcount.\hspace*{\tabcolsep}}l}
$\begin{matrix}  &  &  &  &  & \\0 & 0 & 0 & 1 & 1 & 1\\
0 & 0 & 0 & 1 & 1 & 1\\
0 & 0 & 0 & 1 & 1 & 1\\
1 & 1 & 1 & 0 & 0 & 0\\
1 & 1 & 1 & 0 & 0 & 0\\
1 & 1 & 1 & 0 & 0 & 0\end{matrix}$\\
$\begin{matrix}  &  &  &  &  & \\0 & 0 & 0 & 1 & 1 & 0\\
0 & 0 & 1 & 0 & 0 & 1\\
0 & 1 & 0 & 1 & 1 & 0\\
1 & 0 & 1 & 0 & 0 & 1\\
1 & 0 & 1 & 0 & 0 & 1\\
0 & 1 & 0 & 1 & 1 & 0\end{matrix}$\\
$\begin{matrix}  &  &  &  &  & \\0 & 0 & 0 & 0 & 1 & 1\\
0 & 0 & 0 & 0 & 1 & 1\\
0 & 0 & 0 & 0 & 1 & 1\\
0 & 0 & 0 & 0 & 1 & 1\\
1 & 1 & 1 & 1 & 0 & 0\\
1 & 1 & 1 & 1 & 0 & 0\end{matrix}$\\
$\begin{matrix}  &  &  &  &  & \\0 & 0 & 0 & 0 & 0 & 1\\
0 & 0 & 0 & 0 & 1 & 1\\
0 & 0 & 0 & 0 & 1 & 1\\
0 & 0 & 0 & 0 & 1 & 1\\
0 & 1 & 1 & 1 & 0 & 0\\
1 & 1 & 1 & 1 & 0 & 0\end{matrix}$\\
$\begin{matrix}  &  &  &  &  & \\0 & 0 & 0 & 0 & 0 & 1\\
0 & 0 & 0 & 1 & 1 & 0\\
0 & 0 & 0 & 1 & 1 & 0\\
0 & 1 & 1 & 0 & 0 & 1\\
0 & 1 & 1 & 0 & 0 & 1\\
1 & 0 & 0 & 1 & 1 & 0\end{matrix}$\\
$\begin{matrix}  &  &  &  &  & \\0 & 0 & 0 & 0 & 0 & 0\\
0 & 0 & 0 & 0 & 1 & 1\\
0 & 0 & 0 & 0 & 1 & 1\\
0 & 0 & 0 & 0 & 1 & 1\\
0 & 1 & 1 & 1 & 0 & 0\\
0 & 1 & 1 & 1 & 0 & 0\end{matrix}$\\
$\begin{matrix}  &  &  &  &  & \\0 & 0 & 0 & 1 & 1 & 0\\
0 & 0 & 1 & 0 & 1 & 0\\
0 & 1 & 0 & 0 & 0 & 1\\
1 & 0 & 0 & 0 & 0 & 1\\
1 & 1 & 0 & 0 & 0 & 1\\
0 & 0 & 1 & 1 & 1 & 0\end{matrix}$\\
$\begin{matrix}  &  &  &  &  & \\0 & 0 & 0 & 1 & 1 & 0\\
0 & 0 & 0 & 0 & 1 & 1\\
0 & 0 & 0 & 0 & 1 & 1\\
1 & 0 & 0 & 0 & 0 & 1\\
1 & 1 & 1 & 0 & 0 & 0\\
0 & 1 & 1 & 1 & 0 & 0\end{matrix}$\\
$\begin{matrix}  &  &  &  &  & \\0 & 0 & 0 & 0 & 0 & 1\\
0 & 0 & 0 & 0 & 0 & 1\\
0 & 0 & 0 & 1 & 0 & 1\\
0 & 0 & 1 & 0 & 1 & 0\\
0 & 0 & 0 & 1 & 0 & 1\\
1 & 1 & 1 & 0 & 1 & 0\end{matrix}$\\
$\begin{matrix}  &  &  &  &  & \\0 & 0 & 0 & 0 & 1 & 0\\
0 & 0 & 0 & 0 & 0 & 1\\
0 & 0 & 0 & 0 & 1 & 1\\
0 & 0 & 0 & 0 & 1 & 1\\
1 & 0 & 1 & 1 & 0 & 0\\
0 & 1 & 1 & 1 & 0 & 0\end{matrix}$\\
$\begin{matrix}  &  &  &  &  & \\0 & 0 & 0 & 0 & 1 & 0\\
0 & 0 & 0 & 0 & 0 & 1\\
0 & 0 & 0 & 1 & 0 & 1\\
0 & 0 & 1 & 0 & 1 & 0\\
1 & 0 & 0 & 1 & 0 & 1\\
0 & 1 & 1 & 0 & 1 & 0\end{matrix}$\\
$\begin{matrix}  &  &  &  &  & \\0 & 0 & 0 & 0 & 1 & 0\\
0 & 0 & 0 & 1 & 0 & 1\\
0 & 0 & 0 & 1 & 0 & 1\\
0 & 1 & 1 & 0 & 0 & 0\\
1 & 0 & 0 & 0 & 0 & 1\\
0 & 1 & 1 & 0 & 1 & 0\end{matrix}$\\
$\begin{matrix}  &  &  &  &  & \\0 & 0 & 0 & 0 & 0 & 0\\
0 & 0 & 0 & 0 & 0 & 1\\
0 & 0 & 0 & 1 & 0 & 1\\
0 & 0 & 1 & 0 & 1 & 0\\
0 & 0 & 0 & 1 & 0 & 1\\
0 & 1 & 1 & 0 & 1 & 0\end{matrix}$\\
$\begin{matrix}  &  &  &  &  & \\0 & 1 & 0 & 0 & 0 & 0\\
1 & 0 & 0 & 0 & 0 & 0\\
0 & 0 & 0 & 0 & 1 & 1\\
0 & 0 & 0 & 0 & 1 & 1\\
0 & 0 & 1 & 1 & 0 & 0\\
0 & 0 & 1 & 1 & 0 & 0\end{matrix}$\\
$\begin{matrix}  &  &  &  &  & \\0 & 0 & 0 & 0 & 0 & 0\\
0 & 0 & 0 & 0 & 0 & 0\\
0 & 0 & 0 & 0 & 1 & 1\\
0 & 0 & 0 & 0 & 1 & 1\\
0 & 0 & 1 & 1 & 0 & 0\\
0 & 0 & 1 & 1 & 0 & 0\end{matrix}$\\
\end{longtable}
\unskip
\unpenalty
\unpenalty}

\unvbox\ltmcbox
\end{multicols}
\newpage
\begin{multicols}{3}
\scriptsize
\setbox\ltmcbox\vbox{
\makeatletter\col@number\@ne
\begin{longtable} {@{\stepcounter{rowcount}\therowcount.\hspace*{\tabcolsep}}l}
$\begin{matrix}  &  &  &  &  & \\0 & 0 & 0 & 0 & 0 & 1\\
0 & 0 & 0 & 0 & 0 & 1\\
0 & 0 & 0 & 0 & 0 & 1\\
0 & 0 & 0 & 0 & 0 & 1\\
0 & 0 & 0 & 0 & 0 & 1\\
1 & 1 & 1 & 1 & 1 & 0\end{matrix}$\\
$\begin{matrix}  &  &  &  &  & \\0 & 0 & 0 & 0 & 0 & 1\\
0 & 0 & 0 & 0 & 0 & 1\\
0 & 0 & 0 & 0 & 0 & 1\\
0 & 0 & 0 & 0 & 1 & 0\\
0 & 0 & 0 & 1 & 0 & 1\\
1 & 1 & 1 & 0 & 1 & 0\end{matrix}$\\
$\begin{matrix}  &  &  &  &  & \\0 & 0 & 0 & 0 & 0 & 0\\
0 & 0 & 0 & 0 & 0 & 1\\
0 & 0 & 0 & 0 & 0 & 1\\
0 & 0 & 0 & 0 & 0 & 1\\
0 & 0 & 0 & 0 & 0 & 1\\
0 & 1 & 1 & 1 & 1 & 0\end{matrix}$\\
$\begin{matrix}  &  &  &  &  & \\0 & 0 & 0 & 0 & 0 & 1\\
0 & 0 & 0 & 0 & 1 & 1\\
0 & 0 & 0 & 1 & 1 & 0\\
0 & 0 & 1 & 0 & 0 & 1\\
0 & 1 & 1 & 0 & 0 & 0\\
1 & 1 & 0 & 1 & 0 & 0\end{matrix}$\\
$\begin{matrix}  &  &  &  &  & \\0 & 0 & 0 & 0 & 1 & 0\\
0 & 0 & 0 & 0 & 1 & 0\\
0 & 0 & 0 & 0 & 0 & 1\\
0 & 0 & 0 & 0 & 0 & 1\\
1 & 1 & 0 & 0 & 0 & 1\\
0 & 0 & 1 & 1 & 1 & 0\end{matrix}$\\
$\begin{matrix}  &  &  &  &  & \\0 & 0 & 0 & 0 & 0 & 1\\
0 & 0 & 0 & 0 & 0 & 1\\
0 & 0 & 0 & 0 & 1 & 0\\
0 & 0 & 0 & 0 & 1 & 1\\
0 & 0 & 1 & 1 & 0 & 0\\
1 & 1 & 0 & 1 & 0 & 0\end{matrix}$\\
$\begin{matrix}  &  &  &  &  & \\0 & 0 & 0 & 0 & 0 & 1\\
0 & 0 & 0 & 1 & 0 & 0\\
0 & 0 & 0 & 0 & 1 & 0\\
0 & 1 & 0 & 0 & 0 & 1\\
0 & 0 & 1 & 0 & 0 & 1\\
1 & 0 & 0 & 1 & 1 & 0\end{matrix}$\\
$\begin{matrix}  &  &  &  &  & \\0 & 0 & 0 & 0 & 0 & 0\\
0 & 0 & 0 & 0 & 0 & 1\\
0 & 0 & 0 & 0 & 1 & 0\\
0 & 0 & 0 & 0 & 0 & 1\\
0 & 0 & 1 & 0 & 0 & 1\\
0 & 1 & 0 & 1 & 1 & 0\end{matrix}$\\
$\begin{matrix}  &  &  &  &  & \\0 & 0 & 0 & 0 & 1 & 0\\
0 & 0 & 0 & 0 & 0 & 1\\
0 & 0 & 0 & 0 & 0 & 1\\
0 & 0 & 0 & 0 & 0 & 1\\
1 & 0 & 0 & 0 & 0 & 0\\
0 & 1 & 1 & 1 & 0 & 0\end{matrix}$\\
$\begin{matrix}  &  &  &  &  & \\0 & 0 & 0 & 0 & 0 & 0\\
0 & 0 & 0 & 0 & 0 & 0\\
0 & 0 & 0 & 0 & 0 & 1\\
0 & 0 & 0 & 0 & 0 & 1\\
0 & 0 & 0 & 0 & 0 & 1\\
0 & 0 & 1 & 1 & 1 & 0\end{matrix}$\\
$\begin{matrix}  &  &  &  &  & \\0 & 0 & 0 & 0 & 0 & 0\\
0 & 0 & 0 & 1 & 1 & 0\\
0 & 0 & 0 & 1 & 0 & 1\\
0 & 1 & 1 & 0 & 0 & 0\\
0 & 1 & 0 & 0 & 0 & 1\\
0 & 0 & 1 & 0 & 1 & 0\end{matrix}$\\
$\begin{matrix}  &  &  &  &  & \\0 & 0 & 0 & 1 & 0 & 1\\
0 & 0 & 0 & 0 & 1 & 1\\
0 & 0 & 0 & 1 & 1 & 0\\
1 & 0 & 1 & 0 & 0 & 0\\
0 & 1 & 1 & 0 & 0 & 0\\
1 & 1 & 0 & 0 & 0 & 0\end{matrix}$\\
$\begin{matrix}  &  &  &  &  & \\0 & 0 & 0 & 0 & 1 & 0\\
0 & 0 & 0 & 0 & 0 & 1\\
0 & 0 & 0 & 1 & 1 & 0\\
0 & 0 & 1 & 0 & 0 & 1\\
1 & 0 & 1 & 0 & 0 & 0\\
0 & 1 & 0 & 1 & 0 & 0\end{matrix}$\\
$\begin{matrix}  &  &  &  &  & \\0 & 0 & 0 & 0 & 0 & 0\\
0 & 0 & 0 & 0 & 1 & 0\\
0 & 0 & 0 & 1 & 0 & 0\\
0 & 0 & 1 & 0 & 0 & 1\\
0 & 1 & 0 & 0 & 0 & 1\\
0 & 0 & 0 & 1 & 1 & 0\end{matrix}$\\
$\begin{matrix}  &  &  &  &  & \\0 & 0 & 0 & 1 & 0 & 0\\
0 & 0 & 0 & 0 & 0 & 1\\
0 & 0 & 0 & 0 & 1 & 0\\
1 & 0 & 0 & 0 & 0 & 0\\
0 & 0 & 1 & 0 & 0 & 1\\
0 & 1 & 0 & 0 & 1 & 0\end{matrix}$\\
$\begin{matrix}  &  &  &  &  & \\0 & 0 & 0 & 0 & 0 & 0\\
0 & 0 & 0 & 0 & 0 & 0\\
0 & 0 & 0 & 0 & 1 & 0\\
0 & 0 & 0 & 0 & 0 & 1\\
0 & 0 & 1 & 0 & 0 & 1\\
0 & 0 & 0 & 1 & 1 & 0\end{matrix}$\\
$\begin{matrix}  &  &  &  &  & \\0 & 0 & 0 & 0 & 1 & 0\\
0 & 0 & 0 & 0 & 1 & 0\\
0 & 0 & 0 & 0 & 0 & 1\\
0 & 0 & 0 & 0 & 0 & 1\\
1 & 1 & 0 & 0 & 0 & 0\\
0 & 0 & 1 & 1 & 0 & 0\end{matrix}$\\
$\begin{matrix}  &  &  &  &  & \\0 & 0 & 0 & 0 & 0 & 0\\
0 & 0 & 0 & 0 & 0 & 1\\
0 & 0 & 0 & 0 & 0 & 1\\
0 & 0 & 0 & 0 & 1 & 0\\
0 & 0 & 0 & 1 & 0 & 0\\
0 & 1 & 1 & 0 & 0 & 0\end{matrix}$\\
$\begin{matrix}  &  &  &  &  & \\0 & 0 & 0 & 0 & 0 & 0\\
0 & 0 & 0 & 0 & 0 & 0\\
0 & 0 & 0 & 0 & 0 & 0\\
0 & 0 & 0 & 0 & 0 & 1\\
0 & 0 & 0 & 0 & 0 & 1\\
0 & 0 & 0 & 1 & 1 & 0\end{matrix}$\\
$\begin{matrix}  &  &  &  &  & \\0 & 0 & 0 & 0 & 0 & 1\\
0 & 0 & 1 & 0 & 0 & 0\\
0 & 1 & 0 & 0 & 0 & 0\\
0 & 0 & 0 & 0 & 1 & 0\\
0 & 0 & 0 & 1 & 0 & 0\\
1 & 0 & 0 & 0 & 0 & 0\end{matrix}$\\
$\begin{matrix}  &  &  &  &  & \\0 & 0 & 0 & 0 & 0 & 0\\
0 & 0 & 0 & 0 & 0 & 0\\
0 & 0 & 0 & 0 & 0 & 1\\
0 & 0 & 0 & 0 & 1 & 0\\
0 & 0 & 0 & 1 & 0 & 0\\
0 & 0 & 1 & 0 & 0 & 0\end{matrix}$\\
$\begin{matrix}  &  &  &  &  & \\0 & 0 & 0 & 0 & 0 & 0\\
0 & 0 & 0 & 0 & 0 & 0\\
0 & 0 & 0 & 0 & 0 & 0\\
0 & 0 & 0 & 0 & 0 & 0\\
0 & 0 & 0 & 0 & 0 & 1\\
0 & 0 & 0 & 0 & 1 & 0\end{matrix}$\\
$\begin{matrix}  &  &  &  &  & \\0 & 0 & 0 & 0 & 0 & 0\\
0 & 0 & 0 & 0 & 0 & 0\\
0 & 0 & 0 & 0 & 0 & 0\\
0 & 0 & 0 & 0 & 0 & 0\\
0 & 0 & 0 & 0 & 0 & 0\\
0 & 0 & 0 & 0 & 0 & 0\end{matrix}$\\
\end{longtable}
\unskip
\unpenalty
\unpenalty}

\unvbox\ltmcbox
\end{multicols}
\setcounter{rowcount}{0}
\newpage
\scriptsize
\begin{longtable}{@{\stepcounter{rowcount}\therowcount.\hspace*{\tabcolsep}}p{15cm}}
$P_1$\\
$1/8(-20\mu k^3+k^3\mu ^4+k^4\mu ^3-5k^4\mu ^2+k^2\mu ^3+8k^2\mu -5k^2\mu ^2+5k\mu ^3-8k\mu ^2-4k^2+4k\mu +8k^3-\mu ^4k-4k^4-7k^3\mu ^3+8k^4\mu +18k^3\mu ^2-72P_1\mu )/\mu $\\
$1/48(\mu +k\mu +k^2-k)k(k-1)(\mu -1)(\mu -2)/\mu (\mu -3)$\\
$1/6(\mu +k\mu +k^2-k)k(k-1)(\mu -1)(\mu -2)(k-\mu )/\mu $\\
$1/4(k^5\mu ^2+21\mu k^3-2k^3\mu ^4-k^4\mu ^3-3\mu k^5+3k^4\mu ^2-3k^2\mu ^3-12k^2\mu +11k^2\mu ^2-8k\mu ^3+10k\mu ^2+4k^2-4k\mu -6k^3+2\mu ^4k+2k^5+12k^3\mu ^3-2k^4\mu -25k^3\mu ^2+72P_1\mu )/\mu $\\
$1/12(-15k^4\mu ^2+8k^2\mu ^4+32k^3\mu ^3+6k^5\mu ^2+3k^4\mu +6k^4\mu ^3-10k^3\mu ^2-11k\mu ^3+2k\mu ^2-3k^6\mu -4\mu ^5k-21k^3\mu ^4+6k^4-6k^5-2k^3+2k^6+13\mu ^4k-24k^2\mu ^3-3\mu k^3-3k^5\mu ^3-72P_1\mu ^2+4k^3\mu ^5+k^6\mu ^2+3\mu k^5+16k^2\mu ^2)/\mu ^2$\\
$1/4(k^5\mu ^2+20\mu k^3-2k^3\mu ^4-k^4\mu ^3-2\mu k^5+3k^4\mu ^2-3k^2\mu ^3-10k^2\mu +9k^2\mu ^2-7k\mu ^3+9k\mu ^2+4k^2-4k\mu -7k^3+2\mu ^4k+k^5+2k^4+11k^3\mu ^3-4k^4\mu -22k^3\mu ^2+72P_1\mu )/\mu $\\
$1/4(\mu +k\mu +k^2-k)k(k-1)(\mu -1)(k-\mu )$\\
$1/4(\mu +k\mu +k^2-k)k(k-1)(\mu -1)(-2k\mu +k^2-k+\mu +\mu ^2)/\mu $\\
$1/4(\mu +k\mu +k^2-k)k(k-1)(\mu -1)(-3k\mu +k^2+2\mu ^2)/\mu $\\
$1/2(-2k^5\mu ^2-17\mu k^3+3k^3\mu ^4+\mu k^5+2k^4\mu ^2+6k^2\mu ^3+10k^2\mu -12k^2\mu ^2+8k\mu ^3-9k\mu ^2-4k^2+4k\mu +7k^3-3\mu ^4k+k^5-k^6-3k^4-14k^3\mu ^3+k^4\mu +k^6\mu +21k^3\mu ^2-72P_1\mu )/\mu $\\
$1/2(-3k^5\mu ^2-16\mu k^3+4k^3\mu ^4+2\mu k^5+4k^4\mu ^2+8k^2\mu ^3+10k^2\mu -14k^2\mu ^2+9k\mu ^3-9k\mu ^2-4k^2+4k\mu +7k^3-4\mu ^4k+k^5-k^6-3k^4-17k^3\mu ^3-k^4\mu +k^6\mu +22k^3\mu ^2-72P_1\mu )/\mu $\\
$1/2(k^4\mu ^2+4k^2\mu -15k^2\mu ^4-14k^3\mu ^3+6k^5\mu ^2+11k^4\mu -16k^4\mu ^3+8k^3\mu ^2+8k\mu ^3-4k\mu ^2+\mu k^7+k^6\mu +6\mu ^5k+22k^3\mu ^4+k^4-3k^5+3k^6-k^7-10\mu ^4k+18k^2\mu ^3-10\mu k^3+4k^5\mu ^3+72P_1\mu ^2-6k^3\mu ^5+3k^4\mu ^4-4k^6\mu ^2-7\mu k^5-7k^2\mu ^2)/\mu ^2$\\
$1/16(5k^4\mu ^2+8k^2\mu -28k^2\mu ^4-33k^3\mu ^3+5k^5\mu ^2+17k^4\mu -27k^4\mu ^3+19k^3\mu ^2+16k\mu ^3-8k\mu ^2+\mu k^7+3k^6\mu +12\mu ^5k+44k^3\mu ^4+k^4-3k^5+3k^6-k^7-20\mu ^4k+35k^2\mu ^3-18\mu k^3+9k^5\mu ^3+144P_1\mu ^2-12k^3\mu ^5+4k^4\mu ^4-6k^6\mu ^2-11\mu k^5-15k^2\mu ^2)/\mu ^2$\\
$1/16(-38k^4\mu ^2-42k^2\mu ^4-4k^3\mu ^3+16k^5\mu ^2-11k^4\mu +8k^4\mu ^3+30k^3\mu ^2+8k\mu ^3+16k^3\mu ^6+k^8\mu +2\mu k^7-6k^7\mu ^2-8k^5\mu ^4-12k^4\mu ^5+13k^6\mu ^3-14k^6\mu +22\mu ^5k+10k^3\mu ^4-144P_1\mu ^3-k^4+4k^5-6k^6+4k^7-k^8-14\mu ^4k+7k^2\mu ^3+2\mu k^3-32k^5\mu ^3-54k^3\mu ^5+54k^4\mu ^4+7k^6\mu ^2+20\mu k^5-16\mu ^6k+44k^2\mu ^5-9k^2\mu ^2)/\mu ^3$\\
$1/120(\mu +k\mu +k^2-k)k(k-1)/\mu (k-2)(k-3)(k-4)$\\
$1/6(\mu +k\mu +k^2-k)k(k-1)(k^3-3k^2-3k^2\mu +3k\mu ^2+2k+6k\mu -3\mu ^2-\mu ^3-2\mu )/\mu $\\
$1/24(\mu +k\mu +k^2-k)k(k-1)(-4\mu k^3+k^4-6k^3+18k^2\mu +11k^2-22k\mu +6k^2\mu ^2-4k\mu ^3-6k+6\mu -18k\mu ^2+6\mu ^3+\mu ^4+11\mu ^2)/\mu ^2$\\
$-k^5+1/2k^4+5/2k^2\mu ^2-k\mu ^3-2\mu k^3-2k^3\mu ^2+3k^4\mu -\mu k^5+k^3\mu ^3-1/2k^4\mu ^2+1/2k^6$\\
$1/8(2k^5\mu ^2+25\mu k^3-4k^3\mu ^4+2k^4\mu ^3+7\mu k^5-6k^4\mu ^2-10k^2\mu ^3-12k^2\mu +6k^2\mu ^2-6k\mu ^3+10k\mu ^2+4k^2-4k\mu -6k^3+4\mu ^4k+5k^5-3k^6-k^4+k^7+14k^3\mu ^3-13k^4\mu -3k^6\mu -12k^3\mu ^2+72P_1\mu )/\mu $\\
$1/2(\mu +k\mu +k^2-k)k(k-1)(-5k^2\mu +k^3-k^2+3k\mu +8k\mu ^2-2\mu ^2-4\mu ^3)/\mu $\\
$1/2(4k^5\mu ^2+20\mu k^3-6k^3\mu ^4+3k^4\mu ^3+8\mu k^5-13k^4\mu ^2-15k^2\mu ^3-10k^2\mu +9k^2\mu ^2-7k\mu ^3+9k\mu ^2+4k^2-4k\mu -7k^3+6\mu ^4k+2k^5-2k^6+2k^4+k^7+19k^3\mu ^3-10k^4\mu -4k^6\mu -9k^3\mu ^2+72P_1\mu )/\mu $\\
$1/2(42k^4\mu ^2-4k^2\mu +27k^2\mu ^4-17k^3\mu ^3-30k^5\mu ^2+11k^4\mu +26k^4\mu ^3-38k^3\mu ^2-10k\mu ^3+4k\mu ^2-5\mu k^7+17k^6\mu -9\mu ^5k-21k^3\mu ^4+k^4-4k^5+6k^6-4k^7+k^8+3\mu ^4k+4k^2\mu ^3+4\mu k^3-3k^5\mu ^3-72P_1\mu ^2+9k^3\mu ^5-9k^4\mu ^4+9k^6\mu ^2-23\mu k^5+13k^2\mu ^2)/\mu ^2$\\
$1/12(\mu +k\mu +k^2-k)k(k-1)(k^4-3k^3-7\mu k^3+18k^2\mu ^2+2k^2+15k^2\mu -24k\mu ^2-20k\mu ^3-6k\mu +4\mu ^2+12\mu ^3+8\mu ^4)/\mu ^2$\\
$1/12(-92k^4\mu ^2-51k^2\mu ^4+119k^3\mu ^3+116k^5\mu ^2-24k^4\mu -140k^4\mu ^3+25k^3\mu ^2-4k\mu ^3-14k^3\mu ^6-6k^8\mu +30\mu k^7+15k^7\mu ^2-3k^5\mu ^4+21k^4\mu ^5-16k^6\mu ^3-58k^6\mu +10\mu ^5k+77k^3\mu ^4+72P_1\mu ^3-2k^4+9k^5-16k^6+14k^7-6k^8+k^9+16\mu ^4k-34k^2\mu ^3+4\mu k^3+75k^5\mu ^3+18k^3\mu ^5-39k^4\mu ^4-66k^6\mu ^2+54\mu k^5+14\mu ^6k-49k^2\mu ^5+2k^2\mu ^2)/\mu ^3$\\
$1/10(\mu +k\mu +k^2-k)k(k-1)(2k\mu +k^3-k^2-\mu ^2+9k\mu ^2-5k^2\mu -5\mu ^3)/\mu $\\
$1/12(-3k^5\mu ^2-20\mu k^3+4k^3\mu ^4+3k^4\mu ^2+8k^2\mu ^3+10k^2\mu -9k^2\mu ^2+7k\mu ^3-9k\mu ^2-4k^2+4k\mu +7k^3-4\mu ^4k-k^5-2k^4-15k^3\mu ^3+5k^4\mu +k^6\mu +18k^3\mu ^2-72P_1\mu )/\mu $\\
$1/2(6k^5\mu ^2+20\mu k^3-8k^3\mu ^4+4k^4\mu ^3+10\mu k^5-19k^4\mu ^2-20k^2\mu ^3-10k^2\mu +9k^2\mu ^2-7k\mu ^3+9k\mu ^2+4k^2-4k\mu -7k^3+8\mu ^4k+2k^5-2k^6+2k^4+k^7+23k^3\mu ^3-11k^4\mu -5k^6\mu -5k^3\mu ^2+72P_1\mu )/\mu $\\
$1/2(41k^4\mu ^2-4k^2\mu +41k^2\mu ^4-28k^3\mu ^3-39k^5\mu ^2+3k^4\mu +46k^4\mu ^3-29k^3\mu ^2-9k\mu ^3+4k\mu ^2-6\mu k^7+17k^6\mu -14\mu ^5k-33k^3\mu ^4-k^5+3k^6-3k^7+k^8+5\mu ^4k-2k^2\mu ^3+7\mu k^3-7k^5\mu ^3-72P_1\mu ^2+14k^3\mu ^5-13k^4\mu ^4+13k^6\mu ^2-17\mu k^5+10k^2\mu ^2)/\mu ^2$\\
$1/4(51k^4\mu ^2-8k^2\mu +56k^2\mu ^4-19k^3\mu ^3-47k^5\mu ^2+k^4\mu +59k^4\mu ^3-49k^3\mu ^2-18k\mu ^3+8k\mu ^2-7\mu k^7+19k^6\mu -20\mu ^5k-52k^3\mu ^4-k^5+3k^6-3k^7+k^8+12\mu ^4k-11k^2\mu ^3+14\mu k^3-11k^5\mu ^3-144P_1\mu ^2+20k^3\mu ^5-16k^4\mu ^4+17k^6\mu ^2-19\mu k^5+20k^2\mu ^2)/\mu ^2$\\
$1/4(-28k^4\mu ^2-4k^2\mu ^4+77k^3\mu ^3+78k^5\mu ^2-3k^4\mu -128k^4\mu ^3-11k^3\mu ^2-8k\mu ^3-29k^3\mu ^6-7k^8\mu +24\mu k^7+21k^7\mu ^2+5k^5\mu ^4+34k^4\mu ^5-29k^6\mu ^3-30k^6\mu -9\mu ^5k+85k^3\mu ^4+144P_1\mu ^3+k^5-4k^6+6k^7-4k^8+k^9+18\mu ^4k-21k^2\mu ^3+109k^5\mu ^3+67k^3\mu ^5-104k^4\mu ^4-68k^6\mu ^2+16\mu k^5+29\mu ^6k-92k^2\mu ^5+8k^2\mu ^2)/\mu ^3$\\
$1/8(67k^4\mu ^2-4k^2\mu +48k^2\mu ^4-47k^3\mu ^3-52k^5\mu ^2+16k^4\mu +51k^4\mu ^3-50k^3\mu ^2-11k\mu ^3+4k\mu ^2-7\mu k^7+24k^6\mu -16\mu ^5k-31k^3\mu ^4+k^4-4k^5+6k^6-4k^7+k^8-\mu ^4k+15k^2\mu ^3+3\mu k^3-8k^5\mu ^3-72P_1\mu ^2+16k^3\mu ^5-16k^4\mu ^4+16k^6\mu ^2-32\mu k^5+15k^2\mu ^2)/\mu ^2$\\
$1/4(-51k^4\mu ^2-31k^2\mu ^4+111k^3\mu ^3+111k^5\mu ^2-5k^4\mu -183k^4\mu ^3-5k^3\mu ^2-8k\mu ^3-36k^3\mu ^6-8k^8\mu +29\mu k^7+26k^7\mu ^2+7k^5\mu ^4+44k^4\mu ^5-38k^6\mu ^3-39k^6\mu +133k^3\mu ^4+144P_1\mu ^3+k^5-4k^6+6k^7-4k^8+k^9+20\mu ^4k-27k^2\mu ^3+145k^5\mu ^3+72k^3\mu ^5-129k^4\mu ^4-89k^6\mu ^2+23\mu k^5+36\mu ^6k-116k^2\mu ^5+8k^2\mu ^2)/\mu ^3$\\
$1/12(18k^4\mu ^2+33k^2\mu ^4+6k^3\mu ^3-90k^5\mu ^2+k^4\mu +132k^4\mu ^3+k^3\mu ^2+61k^3\mu ^7+30k^8\mu ^2-62k^7\mu ^3+12k^5\mu ^5-89k^4\mu ^6+61k^6\mu ^4-8\mu k^9-125k^3\mu ^6+33k^8\mu -52\mu k^7-61\mu ^7k-111k^7\mu ^2-300k^5\mu ^4+243k^4\mu ^5+229k^6\mu ^3+38k^6\mu -26\mu ^5k-196k^3\mu ^4+211k^2\mu ^6-k^5+5k^6-10k^7+10k^8-5k^9+k^10+12\mu ^4k-13k^2\mu ^3-292k^5\mu ^3-216P_1\mu ^4-280k^3\mu ^5+390k^4\mu ^4+152k^6\mu ^2-12\mu k^5+3\mu ^6k+51k^2\mu ^5)/\mu ^4$\\
$1/48(-21k^4\mu ^2+6k^2\mu ^4+123k^3\mu ^3+99k^5\mu ^2-201k^4\mu ^3-28k^3\mu ^2-16k\mu ^3-56k^3\mu ^6-9k^8\mu +27\mu k^7+33k^7\mu ^2+18k^5\mu ^4+60k^4\mu ^5-55k^6\mu ^3-27k^6\mu -20\mu ^5k+138k^3\mu ^4+288P_1\mu ^3-k^6+3k^7-3k^8+k^9+36\mu ^4k-40k^2\mu ^3+189k^5\mu ^3+132k^3\mu ^5-198k^4\mu ^4-99k^6\mu ^2+9\mu k^5+56\mu ^6k-172k^2\mu ^5+16k^2\mu ^2)/\mu ^3$\\
$1/16(9k^4\mu ^2+42k^2\mu ^4+21k^3\mu ^3-68k^5\mu ^2+112k^4\mu ^3+88k^3\mu ^7+37k^8\mu ^2-83k^7\mu ^3+8k^5\mu ^5-124k^4\mu ^6+90k^6\mu ^4-9\mu k^9-188k^3\mu ^6+32k^8\mu -42\mu k^7-88\mu ^7k-124k^7\mu ^2-416k^5\mu ^4+364k^4\mu ^5+288k^6\mu ^3+24k^6\mu -36\mu ^5k-208k^3\mu ^4+300k^2\mu ^6+k^6-4k^7+6k^8-4k^9+k^10+16\mu ^4k-16k^2\mu ^3-322k^5\mu ^3-288P_1\mu ^4-372k^3\mu ^5+476k^4\mu ^4+146k^6\mu ^2-5\mu k^5+12\mu ^6k+36k^2\mu ^5)/\mu ^4$\\
$1/48(3k^4\mu ^2+432P_1\mu ^5+22k^2\mu ^4+k^3\mu ^3+9k^5\mu ^2+164\mu ^8k+376k^3\mu ^7-141k^8\mu ^2+376k^7\mu ^3+921k^5\mu ^5-781k^4\mu ^6-709k^6\mu ^4+35\mu k^9+781k^3\mu ^6-50k^8\mu -592k^2\mu ^7+30\mu k^7-48\mu ^7k+174k^7\mu ^2+735k^5\mu ^4-967k^4\mu ^5-411k^6\mu ^3-5k^6\mu -24\mu ^5k-64k^3\mu ^4+21k^2\mu ^6-k^6+5k^7-10k^8+10k^9-5k^10+k^11+147k^5\mu ^3+264k^4\mu ^7+41k^9\mu ^2-164k^3\mu ^8-113k^8\mu ^3-148k^6\mu ^5-73k^5\mu ^6+189k^7\mu ^4-9k^10\mu +263k^3\mu ^5-173k^4\mu ^4-86k^6\mu ^2-\mu k^5+52\mu ^6k-45k^2\mu ^5)/\mu ^5$\\
$1/720(25k^4\mu ^2-26k^2\mu ^4+15k^3\mu ^3-70k^5\mu ^2+228\mu ^8k-85k^4\mu ^3-720P_1\mu ^6-345\mu ^9k-1527k^3\mu ^7+175k^8\mu ^2+1293k^2\mu ^8-395k^7\mu ^3-1316k^5\mu ^5+1735k^4\mu ^6+840k^6\mu ^4-45\mu k^9-36k^3\mu ^6-504k^2\mu ^7+45\mu k^7-165\mu ^7k-100k^7\mu ^2-120k^5\mu ^4-148k^4\mu ^5+10k^6\mu ^3-36k^6\mu -24\mu ^5k-98k^3\mu ^4+190k^2\mu ^6+k^6-6k^7+15k^8-20k^9+15k^10-6k^11+k^12+165k^5\mu ^3+1944k^4\mu ^7-150k^9\mu ^2-918k^3\mu ^8+435k^8\mu ^3+1731k^6\mu ^5-2232k^5\mu ^6-990k^7\mu ^4+36k^10\mu +345k^3\mu ^9+252k^5\mu ^7+45k^10\mu ^2-603k^4\mu ^8-145k^9\mu ^3-432k^7\mu ^5+253k^6\mu ^6+315k^8\mu ^4-9k^11\mu +224k^3\mu ^5+79k^4\mu ^4+75k^6\mu ^2+9\mu k^5+90\mu ^6k-35k^2\mu ^5)/\mu ^6$\\

\end{longtable}
\normalsize
\setcounter{rowcount}{0}

\subsection*{General SRG}

order $4$:

\begin{multicols}{4}
\scriptsize
\setbox\ltmcbox\vbox{
\makeatletter\col@number\@ne
\begin{longtable} {@{\stepcounter{rowcount}\therowcount.\hspace*{\tabcolsep}}l}
$\begin{matrix}  &  &  & \\ 0 & 0 & 0 & 1\\ 
0 & 0 & 0 & 1\\ 
0 & 0 & 0 & 1\\ 
1 & 1 & 1 & 0\end{matrix}$\\
$\begin{matrix}  &  &  & \\ 0 & 0 & 0 & 1\\ 
0 & 0 & 1 & 1\\ 
0 & 1 & 0 & 1\\ 
1 & 1 & 1 & 0\end{matrix}$\\
$\begin{matrix}  &  &  & \\ 0 & 0 & 1 & 1\\ 
0 & 0 & 1 & 1\\ 
1 & 1 & 0 & 1\\ 
1 & 1 & 1 & 0\end{matrix}$\\
$\begin{matrix}  &  &  & \\ 0 & 0 & 1 & 1\\ 
0 & 0 & 1 & 1\\ 
1 & 1 & 0 & 0\\ 
1 & 1 & 0 & 0\end{matrix}$\\
$\begin{matrix}  &  &  & \\ 0 & 0 & 1 & 0\\ 
0 & 0 & 0 & 1\\ 
1 & 0 & 0 & 1\\ 
0 & 1 & 1 & 0\end{matrix}$\\
$\begin{matrix}  &  &  & \\ 0 & 0 & 0 & 0\\ 
0 & 0 & 0 & 1\\ 
0 & 0 & 0 & 1\\ 
0 & 1 & 1 & 0\end{matrix}$\\
$\begin{matrix}  &  &  & \\ 0 & 1 & 1 & 1\\ 
1 & 0 & 1 & 1\\ 
1 & 1 & 0 & 1\\ 
1 & 1 & 1 & 0\end{matrix}$\\
$\begin{matrix}  &  &  & \\ 0 & 0 & 0 & 0\\ 
0 & 0 & 1 & 1\\ 
0 & 1 & 0 & 1\\ 
0 & 1 & 1 & 0\end{matrix}$\\
$\begin{matrix}  &  &  & \\ 0 & 0 & 0 & 1\\ 
0 & 0 & 1 & 0\\ 
0 & 1 & 0 & 0\\ 
1 & 0 & 0 & 0\end{matrix}$\\
$\begin{matrix}  &  &  & \\ 0 & 0 & 0 & 0\\ 
0 & 0 & 0 & 0\\ 
0 & 0 & 0 & 1\\ 
0 & 0 & 1 & 0\end{matrix}$\\
$\begin{matrix}  &  &  & \\ 0 & 0 & 0 & 0\\ 
0 & 0 & 0 & 0\\ 
0 & 0 & 0 & 0\\ 
0 & 0 & 0 & 0\end{matrix}$\\
\end{longtable}
\unskip
\unpenalty
\unpenalty}

\unvbox\ltmcbox
\end{multicols}
\setcounter{rowcount}{0}

\scriptsize
\begin{longtable}{@{\stepcounter{rowcount}\therowcount.\hspace*{\tabcolsep}}p{15cm}}
$P_1$\\
$1/2 (-2 k^3 \mu \lambda+k^2 \mu \lambda^2-2 k^2+2 k \mu-k^2 \mu+5 k^3-5 k^2 \lambda+k^2 \mu \lambda+3 k \mu \lambda-2 k^3 \mu+8 k^3 \lambda-4 k^2 \lambda^2-4 k^4+k^4 \mu+k^5-3 k^4 \lambda+3 k^3 \lambda^2+k \mu \lambda^2-k^2 \lambda^3-6 P_1 \mu)/\mu$\\
$-1/4 (-3 k^3 \mu \lambda+2 k^2 \mu \lambda^2-2 k^2+2 k \mu-k^2 \mu+5 k^3-6 k^2 \lambda+k^2 \mu \lambda+4 k \mu \lambda-2 k^3 \mu+10 k^3 \lambda-6 k^2 \lambda^2-4 k^4+k^4 \mu+k^5-4 k^4 \lambda+5 k^3 \lambda^2+2 k \mu \lambda^2-2 k^2 \lambda^3-6 P_1 \mu)/\mu$\\
$1/8 (-k^2 \mu^2 \lambda-5 k^3 \mu \lambda+3 k^2 \mu \lambda^2-3 k^2+3 k \mu+7 k^3-8 k^2 \lambda-k \mu^2-k \mu^2 \lambda+4 k^2 \mu \lambda+5 k \mu \lambda-5 k^3 \mu+12 k^3 \lambda-7 k^2 \lambda^2-5 k^4+k^3 \mu^2+2 k^4 \mu+k^5-4 k^4 \lambda+5 k^3 \lambda^2+2 k \mu \lambda^2-2 k^2 \lambda^3-6 P_1 \mu)/\mu$\\
$-1/2 (-k^2 \mu^2 \lambda-3 k^3 \mu \lambda+2 k^2 \mu \lambda^2-2 k^2+2 k \mu+k^2 \mu+4 k^3-5 k^2 \lambda-k \mu^2-k \mu^2 \lambda+4 k^2 \mu \lambda+3 k \mu \lambda-4 k^3 \mu+6 k^3 \lambda-4 k^2 \lambda^2-2 k^4+k^3 \mu^2+k^4 \mu-k^4 \lambda+2 k^3 \lambda^2+k \mu \lambda^2-k^2 \lambda^3-6 P_1 \mu)/\mu$\\
$1/2 (-k^3 \mu^2 \lambda+4 k^2 \mu^2 \lambda-k^2 \mu^3 \lambda+k^2 \mu^2 \lambda^2-k^3+k \mu^2+k \mu^2 \lambda-k \mu^3+2 k^2 \mu^2-3 k^3 \lambda+3 k^4-3 k^3 \mu^2-k \mu^3 \lambda+k^6-3 k^5+k^3 \mu^3-3 k^5 \lambda+3 k^4 \lambda^2+6 k^4 \lambda-k^3 \lambda^3-3 k^3 \lambda^2-6 P_1 \mu^2)/\mu^2$\\
\end{longtable}

\begin{longtable}{@{\stepcounter{rowcount}\therowcount.\hspace*{\tabcolsep}}p{15cm}}
$1/24 (-3 k^3 \mu \lambda+3 k^2 \mu \lambda^2-2 k^2+2 k \mu-k^2 \mu+5 k^3-5 k^2 \lambda+3 k \mu \lambda-2 k^3 \mu+9 k^3 \lambda-6 k^2 \lambda^2-4 k^4+k^4 \mu+k^5-4 k^4 \lambda+6 k^3 \lambda^2+3 k \mu \lambda^2-3 k^2 \lambda^3-6 P_1 \mu)/\mu$\\
$-1/6 (-k^3 \mu^2 \lambda+k^2 \mu^2 \lambda+6 k^3 \mu \lambda-3 k^4 \mu \lambda-k^2 \mu \lambda^2+2 k^3 \mu \lambda^2-2 k^2 \mu+2 k \mu^2+2 k \mu^2 \lambda-3 k^2 \mu \lambda+5 k^3 \mu-k^2 \mu^2-k^3 \lambda-2 k^3 \mu^2-4 k^4 \mu+k^4 \mu^2+\mu k^5-k^5 \lambda+2 k^4 \lambda^2+2 k^4 \lambda-k^3 \lambda^3-2 k^3 \lambda^2-6 P_1 \mu^2)/\mu^2$\\
$1/8 (4 k^2 \mu^2+k \lambda^2 \mu^2-4 k^2 \mu \lambda^2+3 k \mu^2 \lambda+8 k^2 \mu^2 \lambda-6 k^3 \mu^2-3 k^3 \mu^2 \lambda-2 k^2 \mu^3 \lambda-2 k \mu^3+3 k^2 \mu^2 \lambda^2-2 k^5-2 k \mu^3 \lambda+2 k \mu^2+2 k^3 \mu^3-5 k^2 \mu \lambda+k^6+k^4 \lambda^2-2 k^5 \lambda-6 P_1 \mu^2+2 k^4 \mu+2 k^4 \lambda+k^3 \mu-2 k^2 \mu-k^2 \mu \lambda^3+k^3 \mu \lambda-\mu k^5+k^4+2 k^4 \mu \lambda)/\mu^2$\\
$1/4 (k^2 \mu^2+6 k^3 \mu \lambda^2+2 \mu^4 k-2 k^6 \mu+2 \mu^4 k \lambda+2 k^3 \mu \lambda^3+k^2 \mu^2 \lambda+2 k^3 \mu^2+4 k^3 \mu^2 \lambda-9 k^2 \mu^3 \lambda-2 k \mu^3-5 k^2 \mu^3+3 k^5+6 P_1 \mu^3-2 k \mu^3 \lambda+6 k^3 \mu^3+k^3 \mu^2 \lambda^2+6 \mu k^5 \lambda-6 k^4 \mu \lambda^2-3 k^6+2 k^2 \mu^4 \lambda-3 k^4 \lambda^2+6 k^5 \lambda+k^4 \mu^3-2 k^3 \mu^4-6 k^4 \mu+k^3 \mu^3 \lambda-k^4 \mu^2 \lambda-2 k^2 \mu^3 \lambda^2-3 k^4 \lambda+2 k^3 \mu-3 k^6 \lambda+3 k^5 \lambda^2+k^7+6 k^3 \mu \lambda-3 k^4 \mu^2+6 \mu k^5-k^4 \lambda^3-k^4-12 k^4 \mu \lambda)/\mu^3$\\
$1/24 (-k^2 \mu^2+6 k^3 \mu \lambda^2-6 \mu k^5 \lambda^2-3 k \mu^5 \lambda+5 \mu^4 k+8 k^2 \mu^4-2 k^7 \mu-2 k^4 \mu^4+4 k^6 \mu+3 k^6 \mu^2-12 k^5 \mu^2+5 \mu^4 k \lambda+2 k^3 \mu \lambda^3-2 k^2 \mu^2 \lambda-4 k^3 \mu^2-14 k^3 \mu^2 \lambda-8 k^2 \mu^3 \lambda-2 k \mu^3-k^2 \mu^2 \lambda^2-5 k^2 \mu^3-4 k^5-2 k \mu^3 \lambda+6 k^6 \mu \lambda+k^5 \mu^3+k^4 \lambda^4+k^3 \mu^3-14 k^3 \mu^2 \lambda^2-6 \mu k^5 \lambda+6 k^6-3 k \mu^5-6 P_1 \mu^4+16 k^2 \mu^4 \lambda+6 k^4 \lambda^2-12 k^5 \lambda+5 k^4 \mu^3-10 k^5 \mu^2 \lambda-11 k^3 \mu^4-4 k^4 \mu-6 k^3 \mu^3 \lambda+26 k^4 \mu^2 \lambda-k^2 \mu^3 \lambda^2+4 k^4 \lambda+11 k^4 \mu^2 \lambda^2+2 k^3 \mu+12 k^6 \lambda-12 k^5 \lambda^2-4 k^7-4 k^3 \lambda^3 \mu^2+k^8+6 k^3 \mu \lambda+14 k^4 \mu^2-k^3 \mu^4 \lambda-4 k^7 \lambda-k^3 \mu^3 \lambda^2+4 k^4 \lambda^3-3 k^2 \mu^5 \lambda+3 k^2 \lambda^2 \mu^4+k^4+6 k^6 \lambda^2-4 k^5 \lambda^3+3 k^3 \mu^5-6 k^4 \mu \lambda+2 k^4 \mu \lambda^3)/\mu^4$\\
\end{longtable}
\setcounter{rowcount}{0}

\nocite{blog}
\nocite{spectra}
\nocite{klin}
\nocite{tfSRG}




\end{document}